\colorlet{ablue}{Blue072}
\newtheoremstyle{thmstyle}{}{}{\itshape}{}{\bfseries}{ }{5pt}{}
\newtheoremstyle{exstyle}{}{}{}{}{\bfseries}{ }{5pt}{}
\newtheoremstyle{defstyle}{}{}{}{}{\bfseries}{ }{5pt}{}
\newtheoremstyle{remstyle}{}{}{}{}{\bfseries}{ }{5pt}{}
\newtheoremstyle{proofstyle}{}{}{}{}{\bfseries}{ }{5pt}{}
\theoremstyle{thmstyle}
\newtheorem{thm}{Theorem}[section]
\newtheorem{theorem}[thm]{Theorem}
\def\thmautorefname~#1\null{Theorem~#1\null}
\newaliascnt{lemma}{thm}
\newtheorem{lemma}[lemma]{Lemma}
\def\lemmaautorefname~#1\null{Lemma~#1\null}
\newaliascnt{proposition}{thm}
\def\propositionautorefname~#1\null{Proposition~#1\null}
\newaliascnt{corollary}{thm}
\def\corollaryautorefname~#1\null{\corollaryname~#1\null}
\def\equationautorefname~#1\null{\text{(#1)\null}}
\def\figureautorefname~#1\null{Figure~#1\null}
\def\itemautorefname~#1\null{(#1)\null}
\def\sectionautorefname~#1\null{Section~#1\null}
\def\subsubsectionautorefname~#1\null{Subsubsection~#1\null}
\theoremstyle{exstyle}
\newaliascnt{example}{thm}
\newtheorem{example}[example]{Example}
\newtheorem{examples}[example]{Examples}
\def\exampleautorefname~#1\null{Examples~#1\null}
\theoremstyle{defstyle}
\newaliascnt{definition}{thm}
\def\definitionautorefname~#1\null{Definition~#1\null}
\theoremstyle{remstyle}
\newaliascnt{remark}{thm}
\newtheorem{remark}[remark]{Remark}
\def\remarkautorefname~#1\null{Remark~#1\null}
\newtheorem*{openprob}{Open Problems}
\theoremstyle{proofstyle}
\renewcommand{\b}[1]{\mathbf{#1}} 
\providecommand\defn[1]{{\emph{#1}}}
\newcommand\mcA{\mathcal{A}}
\newcommand\msR{\mathscr{R}}
\newcommand\msB{\mathscr{B}}
\newcommand{\R}{\mathbb{R}} 
\newcommand{\N}{\mathbb{N}} 
\newcommand{\Z}{\mathbb{Z}} 
\newcommand{\field}{\mathbb{K}}
\newcommand{\st}{\;\mid\;}
\newcommand{\bs}{\backslash}
\newcommand{\ie}{\textit{i.e.},~} 
\newcommand{\set}[2]{\left\{#1\st#2\right\}} 
\newcommand{\gen}[1]{\left<#1\right>}
\newcommand{\genset}[2]{\gen{#1\st#2}}
\newcommand{\iso}{\cong} 
\DeclarePairedDelimiter\abs{\lvert}{\rvert}%
\newcommand{\order}[1]{\abs{#1}}
\newcommand{\roots}{\Phi}
\newcommand{\proots}{\roots^+}
\DeclareMathOperator{\wt}{wt}
\DeclareMathOperator{\sgn}{sgn}
\DeclareMathOperator{\Shi}{Shi}
\newcommand{\shiarr}{\Shi_{\Phi}}
\newcommand{\weylarr}{\mcA_{\Phi}}
\DeclareMathOperator{\nar}{N}
\DeclareMathOperator{\poin}{Poin}
\newcommand{\bns}[4]{\binom{#1}{#2} - \binom{#3}{#4}}
\newcommand{\nabovediag}[4]{%
    \binom{#3 + #4 - #1 - #2}{#4 - #2} - \binom{#3 + #4 - #1 - #2}{#4 - #1 + 1}
}
\newcommand{\weakabove}[2]{%
    \gamma\left(\! #1 \xrightarrow{y\geq x} #2 \!\right)
}
\newcommand{\Fdiag}[2]{%
    \gamma\left(\! #1 \xrightarrow{y\geq x} #2^{D\Sigma} \!\right)
}
\newcommand{\weakaboveNar}[2]{%
	\gamma'\left( #1 \xrightarrow{y\geq x} #2 \right)
}
\newcommand\Autoref[1]{\@first@ref#1,@}
\def\@throw@dot#1.#2@{#1}
\def\@set@refname#1{
    \edef\@tmp{\getrefbykeydefault{#1}{anchor}{}}%
    \xdef\@tmp{\expandafter\@throw@dot\@tmp.@}%
    \ltx@IfUndefined{\@tmp autorefnameplural}%
         {\def\@refname{\@nameuse{\@tmp autorefname}s}}%
         {\def\@refname{\@nameuse{\@tmp autorefnameplural}}}%
}
\def\@first@ref#1,#2{%
  \ifx#2@\autoref{#1}\let\@nextref\@gobble
  \else%
    \@set@refname{#1}
    \@refname~\ref{#1}
    \let\@nextref\@next@ref
  \fi%
  \@nextref#2%
}
\def\@next@ref#1,#2{%
   \ifx#2@ and~\ref{#1}\let\@nextref\@gobble
   \else, \ref{#1}
   \fi%
   \@nextref#2%
}
\title{Enumerating regions of Shi arrangements per Weyl cone}
\author[A. Dermenjian]{Aram Dermenjian}
\address[A. Dermenjian]{University of Manchester}
\email[A. Dermenjian]{aram.dermenjian.math@gmail.com}
\thanks{The first author was funded by Heilbronn Institute for Mathematical Research and the Dame Kathleen Ollerenshaw Trust}
\author[E. Tzanaki]{Eleni Tzanaki}
\address[E. Tzanaki]{University of Crete, Greece}
\email[E. Tzanaki]{etzanaki@uoc.gr}
\thanks{The second author was partially supported by the Hellenic Foundation for Research and Innovation (H.F.R.I.). 
	Project Number: HFRI-FM20-0453}
    \subjclass[2020]{20F55,52C35,05C20,05C38,05C30,05C22,14N10}
    \keywords{Weyl groups, Coxeter groups, Hyperplane arrangements, Directed graphs, Shi arrangements, Root poset}
\begin{document}

\maketitle

\begin{abstract}
    Given a Shi arrangement $\mcA_\Phi$, it is well-known that the total number of
    regions is counted by the parking number of type $\Phi$ and the total number
    of regions in the dominant cone is given by the Catalan number of type
    $\Phi$. In the case of the latter, in \cite{Shi_Number}, Shi gave a bijection
    between antichains in the root poset of $\Phi$ and the regions in the
    dominant cone. This result was later extended by Armstrong, Reiner and Rhoades
    in \cite{ArmstrongReinerRhoades_Parking} where they gave a bijection
    between the number of regions contained in an arbitrary Weyl cone $C_w$ in
    $\mcA_\Phi$ and certain subposets of the root poset. In this article we
    expand on these results by giving a determinental formula for the precise
    number of regions in $C_w$ using paths in certain digraphs related to Shi
    diagrams.
\end{abstract}

\section{Introduction}

Let $\Phi$ be a finite crystallographic root system with positive roots $\Phi^+$ spanning an $n$-dimensional Euclidean vector space $V$ 
equiped with an inner product $\gen{\,,}$.
The \defn{Weyl} (or \defn{Coxeter}) \defn{arrangement} associated to $\Phi$, denoted by $\mcA_{\Phi}$,  is the collection of the hyperplanes 
$\gen{\alpha,x}=0$ for all $\alpha \in \Phi^+$. 
The regions of $\mcA_{\Phi}$ are cones which are in bijection with the elements of the reflection group $W$ 
associated to the root system $\Phi$. 
The \defn{dominant cone} $C_e$ is the  intersection of the 
positive halfspaces $\gen{\alpha,x}>0$ for all $\alpha \in \Phi^+$. 
It is not difficult to see that each cone in $\mcA_{\Phi}$ can be written as $wC_e$ (or $C_w$ for short) for a unique $w\in W$. 
The \defn{Shi arrangement} associated to $\Phi$ is a deformation of the Weyl arrangement consisting of  the hyperplanes $\gen{\alpha,x} \in \left\{ 0,1 \right\}$ for $\alpha \in \Phi^+$.  

There are several  uniform enumerative formulas concerning Weyl and Shi arrangements.
The  most  well known are the  product formulas enumerating the regions in the dominant cone and enumerating all regions of $\shiarr$. More precisely, the number of 
 regions of the Shi arrangement $\shiarr$  is
 given by the \defn{parking number of type $\Phi$}
$$  \prod_{i=1}^{\ell} (e_i+h+1)$$
and the number of regions in the dominant cone of $\shiarr$ is given by the \defn{Catalan number of type $\Phi$}
$$ \prod_{i=1}^{\ell} \frac{e_i+h+1}{e_i+1},$$
where $e_1,\ldots,e_n$ are the exponents and $h$ is the Coxeter number of the Weyl group $W$. 
The above nice product formulas  combined with the fact that 
 the set of regions of the Shi arrangement is partitioned by the cones in $\mcA_{\Phi}$
 naturally leads us to the question
 of enumerating the  regions within each  Weyl cone $C_w$.
 This article answers this question by giving a determinental formula for enumerating the regions of $C_w$.
 
Towards our answer we exploit the well known bijection between  regions in the dominant cone of a Shi arrangement $\shiarr$ and  antichains in the root poset $\Phi^+$ given by Shi in \cite{Shi_Number} and its  generalization  which relates regions in an arbitrary cone $C_w$ of $\shiarr$ to the antichains of a certain subposet
$\Phi^+_w$   of the root poset $\Phi^+$ given by Armstrong, Reiner and Rhoades in \cite{ArmstrongReinerRhoades_Parking}.
In this setup our objective becomes the enumeration of antichains in each 
 subposet $\Phi^+_w$. 
 Next, we make use of Shi's diagrams $\Lambda_X$, which are essentially a way
   to arrange the positive roots  of $\Phi^+$ 
 in certain arrays of boxes (possibly overlapping) so that the partial order in $\Phi^+$ is nicely visualized. 
 We show that  one can construct an acyclic directed graph $\Gamma_X$ 
 with source $I$ and sink $F$,  
 whose (directed) paths from $I$ to $F$ are in bijection with antichains in the root poset $\Phi^+$. 
 When restricted to the subposet $\Phi^+_w$, the antichains  are in bijection with  paths from $I$ to $F$ which {\em do not} contain 
 certain subpaths depending on $w$. 
 Therefore, the general situation is the following: we have an acyclic 
 directed
 graph $\Gamma$,  a collection $\Pi=\{\pi_1,\ldots,\pi_n\}$ of  subpaths of $\Gamma$ and 
 we want to enumerate all paths from the source $I$ to  the since $F$ of $\Gamma$ which do no 
 contain any of the subpaths in $\Pi$. 
 It is easily understood that this is a problem of inclusion-exclusion and it turns out 
 that, under suitable conditions on the paths of the set $\Pi$, the answer can be expressed in the form of a determinant.

Our first main theorem, which is proven in  \autoref{sec:non-overlapping-paths}, is  a more general statement which applies to any  acyclic directed 
graph $\Gamma$ and any collection $\{\pi_1,\ldots,\pi_n\}$ of \defn{non-overlapping} subpaths of $\Gamma$ (see 
\autoref{sec:non-overlapping-paths} for undefined terms).  
 In the following statement, we  denote by  $\gamma(A\to B)$  the number of paths from $A$ to $B$ in the graph $\Gamma$.

\begin{theorem}
	\label{thm:path-nonoverlapping_intro}
	Let $I$ and $F$ be two arbitrary vertices in an acyclic digraph $\Gamma$. 
	Let $\Pi = \left\{ \pi_1,  \ldots, \pi_n \right\}$ be a collection of non-overlapping paths
	and 	$I_i, F_i$ be the initial and final point of each  subpath $\pi_i$. 
	Then, 
	the number of paths in $\Gamma$ from $I$ to $F$ which do not contain any of the subpaths in $\Pi$ is 
	equal to the determinant 
	\[
 \begin{vmatrix}
		1 & \gamma{(F_2 \to I_1)} & \cdots & \gamma{(F_{n} \to I_1)} & \gamma{(I \to I_1)}\\
		\gamma{(F_1 \to I_2)} & 1 & \cdots & \gamma{(F_n\to I_2)} & \gamma{(I \to I_2)}\\
		\vdots & \vdots & \ddots & \vdots & \vdots\\
		\gamma{(F_1 \to I_{n})} & \gamma(F_2 \to I_n) & \cdots & 1 & \gamma{(I \to I_n)}\\
		\gamma{(F_1 \to F)} & \gamma{(F_2 \to F)} & \cdots & \gamma{(F_{n} \to F)} & \gamma{(I \to F)}\\
	\end{vmatrix}. 
	\]
\end{theorem}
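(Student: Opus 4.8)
The plan is to run inclusion--exclusion over the set of forbidden subpaths that a given path \emph{does} contain, and then match the resulting alternating sum term-by-term with the Leibniz expansion of the displayed determinant. For $S\subseteq\{1,\dots,n\}$ let $A_S$ be the set of paths from $I$ to $F$ in $\Gamma$ that contain $\pi_i$ as a subpath for every $i\in S$; the number we want is $\sum_{S}(-1)^{|S|}|A_S|$. Two facts drive the computation of $|A_S|$. First, since $\Gamma$ is acyclic, every walk in $\Gamma$ is automatically a simple path, so $\gamma(A\to B)$ also counts walks, concatenations of walks are again walks, and each $\pi_i$ occurs at most once inside any path of $\Gamma$. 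Second --- and this is where the non-overlapping hypothesis enters --- if $P\in A_S$ then the subpaths $\{\pi_i:i\in S\}$ sit inside $P$ along pairwise disjoint stretches, hence in a well-defined linear order, and cutting $P$ at these stretches identifies $A_S$ with the set of pairs consisting of a linear ordering $\sigma=(\sigma_1,\dots,\sigma_{|S|})$ of $S$ together with arbitrary connecting walks $I\rightsquigarrow I_{\sigma_1},\ F_{\sigma_1}\rightsquigarrow I_{\sigma_2},\ \dots,\ F_{\sigma_{|S|}}\rightsquigarrow F$. Counting connecting walks with $\gamma$ gives
\[
|A_S|=\sum_{\sigma}\gamma(I\to I_{\sigma_1})\Bigl(\textstyle\prod_{j=1}^{|S|-1}\gamma(F_{\sigma_j}\to I_{\sigma_{j+1}})\Bigr)\gamma(F_{\sigma_{|S|}}\to F),
\]
so, summing with signs and re-indexing, the number of paths from $I$ to $F$ avoiding all of $\Pi$ equals $\sum(-1)^{k}\gamma(I\to I_{i_1})\gamma(F_{i_1}\to I_{i_2})\cdots\gamma(F_{i_k}\to F)$, the sum running over all sequences $(i_1,\dots,i_k)$, $k\ge 0$, of distinct elements of $\{1,\dots,n\}$, the empty sequence contributing $\gamma(I\to F)$.

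Next I would expand the determinant. Index the rows and columns of the matrix $M$ in the statement by $\{1,\dots,n,\ast\}$, where $\ast$ is the last ($F$-)row and the last ($I$-)column, so that $M_{ij}=\gamma(F_j\to I_i)$ for $i\ne j$ in $\{1,\dots,n\}$, $M_{ii}=1$, $M_{i\ast}=\gamma(I\to I_i)$, $M_{\ast j}=\gamma(F_j\to F)$, $M_{\ast\ast}=\gamma(I\to F)$. Write $\det M=\sum_{\tau}\sgn(\tau)\prod_i M_{i\tau(i)}$ and sort the permutations $\tau$ of $\{1,\dots,n,\ast\}$ by cycle structure. The cycle through $\ast$, say $(\ast\,a_1\,a_2\,\cdots\,a_p)$, contributes the factor $\gamma(F_{a_1}\to F)\gamma(F_{a_2}\to I_{a_1})\cdots\gamma(F_{a_p}\to I_{a_{p-1}})\gamma(I\to I_{a_p})$ with sign $(-1)^p$; read backwards this is exactly the term attached to the sequence $(a_p,a_{p-1},\dots,a_1)$ above, with matching sign. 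A fixed point of $\tau$ inside $\{1,\dots,n\}$ contributes the factor $1$. Hence the permutations whose only nontrivial cycle is the one through $\ast$ reproduce precisely the alternating sum of the first paragraph.

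It then remains to see that any $\tau$ possessing a cycle $(b_0\,b_1\,\cdots\,b_{q-1})$ of length $q\ge 2$ inside $\{1,\dots,n\}$ contributes $0$. Such a cycle forces the factor $\gamma(F_{b_1}\to I_{b_0})\gamma(F_{b_2}\to I_{b_1})\cdots\gamma(F_{b_0}\to I_{b_{q-1}})$; were this nonzero, interleaving the corresponding walks with $\pi_{b_0},\dots,\pi_{b_{q-1}}$ would produce a closed walk in $\Gamma$, which in an acyclic digraph must be constant --- forcing every $\pi_{b_j}$ to be the single-vertex path at a common vertex, hence $\pi_{b_0}=\pi_{b_1}$, contradicting that distinct (non-overlapping) members of $\Pi$ are involved. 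So every such term vanishes, and $\det M$ coincides with the count of the first paragraph.

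The delicate point I expect to be the main obstacle is the unique-decomposition claim in the first paragraph: one must distill from the definition of non-overlapping in \autoref{sec:non-overlapping-paths} precisely the statement that two distinct members of $\Pi$ can never occupy overlapping stretches inside a common path of $\Gamma$ (in particular neither shares an edge with, nor is nested inside, the other), together with enough compatibility to handle degenerate --- length-zero or merely ``touching'' --- subpaths; granting this, the remainder is bookkeeping, with the acyclicity of $\Gamma$ supplying both the ``walks are paths'' remark and the cancellation in the third paragraph. As an alternative to the last two paragraphs one may invoke a Schur complement: writing $M$ in block form with top-left block $\mathrm{Id}+B$, where $B_{ij}=\gamma(F_j\to I_i)$ off the diagonal and $B_{ii}=0$, the non-overlapping condition makes the support digraph of $B$ acyclic, so $B$ is nilpotent, $\det(\mathrm{Id}+B)=1$, and expanding $\gamma(I\to F)-r(\mathrm{Id}+B)^{-1}c=\gamma(I\to F)-\sum_{k\ge0}(-1)^k rB^kc$ with $r=(\gamma(F_j\to F))_j$ and $c=(\gamma(I\to I_i))_i$ recovers the same alternating sum.
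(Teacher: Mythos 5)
Your proposal is correct and follows essentially the same route as the paper's proof: expand the determinant over permutations by cycle type, use acyclicity to kill every nontrivial cycle missing the last index, and use the non-overlapping hypothesis to decompose a path containing a prescribed set of forbidden subpaths into uniquely ordered connecting walks, so that the surviving terms assemble into the inclusion--exclusion sum. The only difference is presentational (you start from inclusion--exclusion and match it to the Leibniz expansion, the paper goes the other way, and your Schur-complement remark is an optional repackaging of the same cancellation), so no further comparison is needed.
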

\noindent
We also present a slightly more general version of the above theorem, where we allow edges to have weights (see \autoref{thm:nonoverlapping}).
\medskip


Our next main result is the application of \autoref{thm:path-nonoverlapping_intro} for the  enumeration of regions in each Weyl cone $C_w$. 
More precisely, for each $ X \in \{ A_n,B_n, C_n, D_n, E_6, F_4, G_2\} $ we describe the construction of  the corresponding  digraph  $\Gamma_X$ (see \autoref{sec:root_posets_as_digraphs}) where the digraphs of type $E_7$ and $E_8$ are left as open problems.
As we mentioned earlier, the  digraph graph $\Gamma_X$ has the property that the paths from $I$ to $F$  biject to  antichains in the root poset $\Phi^+$. 
Moreover, each  positive root $\alpha \in \Phi^+$  corresponds 
uniquely to a length two subpath of $\Gamma_X$, represented by a {\em corner} in  $\Gamma_X$.
When  we are restricted to the subposet $\Phi^+_w$, 
 we need to count antichains which do not contain a certain subset $N(w^{-1})$ of positive roots
 where $N(w^{-1})$ is the {\em inversion set} of $w^{-1}$.
 Thus, the antichains  in $\Phi^+_w$  are in bijection with  paths from $I$ to $F$ in $\Gamma_X$ which {\em do not} contain certain corners. 
 Since   any subset of corners is a set of non-overlapping subpaths,  our  \autoref{thm:path-nonoverlapping_intro} is applicable. 

 The graphs for $X=A_n$ and $B_n$ are quite natural to construct from the corresponding root poset $\Phi^+$ (see \autoref{sssec:types_A_and_B})  and, as we show in \Autoref{ssec:type-A-counting,ssec:type-B-counting}, it is possible to give a nice precise
formula for the entries $\gamma(A\to B)$ of the determinant in the enumeration. 
In type $D$ however, due to the complexity of the root poset, the graph  $\Gamma_{D_n}$ is much more complicated.  In  \autoref{ssec:type-D-counting} we describe its construction and, as we did for the other two cases, we give a formula for  the entries  $\gamma(A\to B)$ of the corresponding determinant (see \autoref{lem:type_D_counting}). 
Unfortunately, the formula is complicated and has to be split in several distinct cases.

We conclude our paper with \autoref{sec:narayana_numbers}, where we present an appropriate modification of our main theorem, which counts the number of regions in each cone $C_w$ according to 
their number of separating walls (or equivalently the number of antichains in each subposet $\Phi_w^+$
according to their cardinality) .
More precisely, we give  a determinental formula whose entries  are generating polynomials
counting paths  on their number of corners. 
When restricted to the dominant cone, the expansion of the determinant returns the Narayana polynomial, whereas in an arbitrary cone $C_w$ we get  the Poincar\'e polynomial of the cone $C_w$ in the sense of \cite{DBStump}.

\section{Shi arrangements}
In this section we give background and set up notation on Shi arrangements.
For further and more in-depth details, the interested reader is directed to \cite{Humphreys}.

\subsection{Weyl Arrangements}
Let $\R^n$ be an $n$-dimensional (real) Euclidean vector space
equipped with an inner product $\gen{\,,}$.
A \defn{hyperplane}  is a   codimension $1$ affine subspace of $\R^n$. 
For $\alpha \in \R^n$ and $k\in \R$ we use the notation 
$H_{\alpha,k} = \set{v \in \R^n}{\gen{\alpha, v} = k}$ and often abbreviate  $H_{\alpha,0}$ to $H_{\alpha}$.
A  \defn{hyperplane arrangement}, or \defn{arrangement} for short, is a  finite collection $\mcA$ of hyperplanes  in $\R^n$. 
A \defn{subarrangement} of an arrangement $\mcA$ is a subset of $\mcA$.
If all the hyperplanes in $\mcA$ are linear (\ie they pass through the origin) then we say that $\mcA$  is \defn{central}.

To each  hyperplane   $H_{\alpha, 0}$, we associate a reflection $s_{\alpha}$ which fixes pointwise   $H_{\alpha, 0}$  and sends $\alpha$ to $-\alpha$.
Similarly, to any hyperplane $H_{\alpha,k}$ we associate a reflection $s_{\alpha, k}$ fixing $H_{\alpha,k}$ pointwise.
A \defn{root system $\Phi$} in $\R^n$ is a finite collection of nonzero vectors  called \defn{roots},
satisfying the  following conditions:
\begin{enumerate}
    \item $\Phi \cap \R\alpha = \left\{ \alpha, -\alpha \right\}$ for every $\alpha \in \Phi$, and
    \item $s_\alpha\left( \Phi \right) = \Phi$ for every $\alpha \in \Phi$.
    \item $\gen{\alpha, \beta} \in \Z$ for all $\alpha, \beta \in \Phi$.
\end{enumerate}
The set  $\Phi$  can be decomposed into the subsets of  \defn{positive} $\Phi^+$ and \defn{negative roots} $\Phi^-$ respectively.  Given such a decomposition, the set  $\Delta$ of \defn{simple roots} is  the smallest subset of $\Phi^+$ such that every positive root is a positive linear combination of elements in $\Delta$. 
Let  $W$ be  the group generated by the reflections  $s_{\alpha}$ for $ \alpha \in\Phi$. 
We say that $W$ is the \defn{Weyl group} associated to the root system $\Phi$.

The \defn{Weyl arrangement} associated to a root system $\Phi$ is the
 central hyperplane arrangement whose hyperplanes are  normal to the  roots in $\Phi$, \ie $\weylarr = \set{H_{\alpha}}{\alpha \in \Phi^+}$.
In this case, the reflections $s_\alpha$ for $\alpha\in \Phi^+$ form the reflection group $W$ 
associated to the root system $\Phi$.
It is well-known that finite irreducible Weyl groups are classified into a finite number of types: $A_n$, $B_n$, $C_n$, $D_n$,
 being the four infinite families and $E_6$, $E_7$, $E_8$, $F_4$ and $G_2$ being the exceptional types.
We say that a Weyl arrangement is \defn{of type $X$} if its associated Weyl group is a type $X$ Weyl group where $X$ is one of the $9$ types of the classification of Weyl groups.

Given a root system $\Phi$ of a Weyl group, with simple roots $\Delta$ and roots $\Phi$, there is a nice partial order on the set of positive roots which we describe next.
Given two roots $\alpha, \beta \in \proots$ then we say $\alpha < \beta$ if and only if $\beta - \alpha \in \N\Delta$.
This gives us what is known as the \defn{root poset} of $\Phi^+$ which we denote by $(\Phi^+, \leq)$.
We will use this poset extensively throughout this article.

\subsection{Inversions and regions}
Given a Weyl arrangement $\weylarr = \set{H_{\alpha, 0}}{\alpha \in \Phi^+}$, we let $W$ be its associated Weyl group and $S = \set{s_{\alpha}}{\alpha \in \Delta}$ be the set of \defn{simple reflections} and let $T = \bigcup_{w \in W} wSw^{-1}$, the conjugates of $S$, be the set of \defn{reflections}.
The elements of $W$ can be represented as a word over the alphabet $S$, in other words, $w = s_1 \ldots s_m$ for $s_i \in S$.
The \defn{length $\ell(w)$} of an element $w \in W$ is the length of the shortest such representation.
For each element, we can also associate a set of positive roots.
The \defn{(left) inversion set of $w$} is given by
\[
    N(w) = \Phi^+ \cap w\left( \Phi^- \right) = \set{\alpha \in \Phi^+}{\ell(t_{\alpha}w) < \ell(w),\,t_\alpha \in T}.
\]
It is well known that $\ell(w) = \order{N(w)}$.
Inversion sets have a natural description in terms of the Weyl arrangement as well.

Given a Weyl arrangement $\weylarr$, the \defn{regions} of the arrangement are the connected components of $\R^n \bs \weylarr$.
Without loss of generality, we may assume that there is a unique region called the \defn{base region}, which we denote by $B$,  which is the intersection of the positive half-spaces of all hyperplanes.
Fixing $B$, we define the \defn{separation set $S(R)$} for a region to be the set of hyperplanes which separate $B$ and $R$.
Then for each $S(R)$ there is a unique $w \in W$ such that $N(w) = \set{\alpha}{H_{\alpha,0} \in S(R)}$.
The converse of this is true as well, giving us a bijection between separation sets and inversions.
In other words, to each region of $\weylarr$ we can associate a unique $w \in W$ where the identity is mapped to $B$.

\subsection{Type \texorpdfstring{$A$}{A} - The symmetric group \texorpdfstring{$S_{n+1}$}{Sn}}
One of the best known examples of a Weyl group is the symmetric group.
Recall that the \defn{symmetric group $S_n$} is the group of permutations of $\left[ n \right] = \left\{ 1, 2, \ldots, n \right\}$.
We encode a permutation $\sigma \in S_n$ in one of two ways.
The \defn{one-line notation of $\sigma$} is given by a sequence of numbers $\sigma(1) \sigma(2) \ldots \sigma(n)$ where the number $1$ is sent to $\sigma(1)$, $2$ is sent to $\sigma(2)$, etc.
The \defn{cycle notation of $\sigma$} is given by ordered sets of numbers such as $(ij\ldots k)$ where $i$ is sent to $j$, and $k$ is sent to $i$.

\begin{example}
    As an example let $\sigma \in S_4$ be the permutation which sends $1$ to $2$, $2$ to $4$, $3$ to $3$ and $4$ to $1$.
    Then the one-line notation $\sigma$ is given by $2431$ and the cycle notation of $\sigma$ is given by $(124)(3)$.
\end{example}

We say that a cycle $(a_1, \ldots, a_k)$ has \defn{length $k$}.
A cycle of length $2$ is known as a \defn{transposition}.
It is well-known that every permutation can be written as a product of transpositions.
The \defn{sign $\sgn(\sigma)$} of a permutation $\sigma = t_1 \ldots t_n$ is given by the formula $\sgn(\sigma) = (-1)^n$ where the $t_i$ are transpositions.

The symmetric group $S_n$ is the Weyl group of type $A_{n-1}$ as it is generated by $n-1$ transpositions.
In particular, it is given by the presentation $S_n = \genset{(i, i+1)}{i \in \left[ n-1 \right]}$.
These adjacent transpositions $(i, i+1)$ are the set of simple reflections.
The set of all transpositions $(i, j)$ is the set of reflections.
The hyperplane arrangement associated to $S_n$ is the arrangement whose hyperplanes are defined by the equations $x_i = x_j$ for $i, j \in [n]$ and where $i \neq j$.
To each reflection $(i, j)$ we associate the (positive) root $e_i - e_j$ (where $i < j$).

\begin{example}
    \label{ex:A2}
    We take a moment now to give an example of all definitions used in the type $A_2$ Weyl group (which is the symmetric group $S_3$).
    Our two simple reflections are given by the set $S = \left\{ (1,2), (2,3) \right\}$ with a third non-simple reflection given by the transposition $(1,3)$.
    The hyperplanes for $A_2$ are then defined by the simple roots $\alpha_1 = e_1 - e_2$ and $\alpha_2 = e_2 - e_3$.
    The third (positive) root is then given by $\alpha_1 + \alpha_2 = e_1 - e_3$.
    Taking the negatives of all of these roots gives us the root system for $A_2$.
    The root poset of $A_2$ is then depicted using the following Hasse diagram.
    \begin{center}
        \begin{tikzpicture}[scale=0.3]
            \node (a1) at (0,0) {$\alpha_1$};
            \node (a2) at (3,0) {$\alpha_2$};
            \node (a3) at (1.5,4) {$\alpha_1 + \alpha_2$};
            \draw (a1.north) -- (a3.south);
            \draw (a2.north) -- (a3.south);
        \end{tikzpicture}
    \end{center}

    Although these hyperplanes live in $\R^3$, it is well-known that we can project down into $\R^2$ to get the following hyperplane arrangement (known as the Weyl arrangement of type $A_2$):
    \begin{center}
        \begin{tikzpicture}
            \draw (-1,0) -- (1,0);
            \node[right] at (1, 0) {$H_2$};
            \draw (-0.5, -0.866025403784439) -- (0.5, 0.866025403784439);
            \node[above right] at (0.5, 0.866025403784439) {$H_1$};
            \draw (0.5, -0.866025403784439) -- (-0.5, 0.866025403784439);
            \node[below right] at (0.5, -0.866025403784439) {$H_{1,2}$};
            \fill[ablue, opacity=0.2] (0,0) -- (1, 0)  -- (0.5, 0.866025403784439);
            \node at (0.433012701892219,0.25) {$B$};
            \fill[ablue, opacity=0.2] (0,0) -- (-1, 0)  -- (-0.5, 0.866025403784439);
            \node at (-0.433012701892219, 0.25) {$R$};
        \end{tikzpicture}
    \end{center}

    In the figure, we've shaded two regions.
    In the top right we set (and fixed) an arbitrary region $B$ as the base region and we labelled the region in the top left by an $R$.
    The separation set of $R$ is given by $S(R) = \left\{ H_1, H_{1,2} \right\}$ where $H_1$ is the hyperplane associated to $\alpha_1$ and $H_{1,2}$ is the hyperplane associated to $\alpha_1 + \alpha_2$.
    The element in $A_2$ associated to $R$ is represented by the word $s_1s_2$ (denoting that we first reflect $B$ over $H_2$ and then over $H_1$).
    Calculating the inversion set for $s_1s_2$ we see that $N(s_1s_2) = \left\{ \alpha_1, \alpha_{1} + \alpha_2 \right\}$ as we would expect.
\end{example}

\subsection{Shi Arrangements}
The \defn{Shi arrangement $\shiarr$} corresponding to the root system $\Phi$
consists  of the hyperplanes
\[
    \shiarr = \set{H_{\alpha, k}}{\alpha \in \proots,\,k \in \left\{ 0,1 \right\}}
\]
The Shi arrangement $\shiarr$ consists of the Weyl arrangement $\weylarr$ together with a
positive unit translate of each hyperplane in $\weylarr$. 

Let $\shiarr$ be the Shi arrangement of type $X$ and let $\weylarr$ be the subarrangement of $\shiarr$ associated to the Weyl arrangement of type $X$.
As before, we look at the connected components of $\R^n \bs \shiarr$ which we call the \defn{regions of $\shiarr$} and denote it by $\msR$.
Let $C_e$ be the cone of the intersection of the positive half-spaces of all hyperplanes in the Weyl subarrangement of $\shiarr$, \ie $C_e = \cap_{H \in \weylarr} H^+$.
We call $C_e$ the \defn{dominant cone} of $\shiarr$ and it is associated to the identity of the Weyl group.
Reflecting this cone over the hyperplanes in $\weylarr$, we get a unique cone $C_w$ associated to each $w \in W$ which we call the \defn{Weyl cone} in the Shi arrangment $\shiarr$ associated to $w$.

It is known that the number of regions in the dominant cone are in bijection with the number of antichains in the root poset
where an \defn{antichain} in a poset is a collection of elements which are pairwise incomparable.

\begin{theorem}[{\cite[Theorem 1.4]{Shi_Number}}]
    \label{thm:bij-dom-anti}
    There is a bijection between the number of regions in the dominant cone and the number of antichains in the root poset.
\end{theorem}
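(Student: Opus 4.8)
The plan is to exhibit the bijection explicitly and then to check, in order, that it is well defined, injective, and surjective; only the last of these carries real content.

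For a region $R$ of $\shiarr$ contained in the dominant cone $C_e$, every point of $R$ lies in the \emph{open} cone $C_e$, so for each $\alpha\in\proots$ the number $\gen{\alpha,x}$ is positive and $\gen{\alpha,x}-1$ has a constant sign on $R$. Set $F(R)=\set{\alpha\in\proots}{\gen{\alpha,x}>1\text{ on }R}$ and let $\Psi(R)$ be the set of minimal elements of $F(R)$ in the root poset $(\proots,\le)$; I claim $R\mapsto\Psi(R)$ is the desired bijection. The key structural remark is that $F(R)$ is an order filter, i.e.\ closed upward: if $\alpha\in F(R)$ and $\alpha\le\beta$, then $\beta-\alpha\in\N\simples$, so $\gen{\beta-\alpha,x}\ge0$ on $C_e$ and hence $\gen{\beta,x}=\gen{\alpha,x}+\gen{\beta-\alpha,x}>1$, i.e.\ $\beta\in F(R)$. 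Thus $\Psi(R)$ is indeed an antichain, and $F(R)$ is recovered from it as the filter it generates, $\set{\beta\in\proots}{\beta\ge\alpha\text{ for some }\alpha\in\Psi(R)}$. Consequently $\Psi$ is injective (resp.\ surjective onto antichains) exactly when $R\mapsto F(R)$ is injective (resp.\ surjective onto order filters of $(\proots,\le)$).

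Injectivity is immediate: if $R\ne R'$ are regions of $\shiarr$ inside $C_e$, then some hyperplane of $\shiarr$ separates them, and it cannot be any $H_{\alpha,0}$ since both regions lie in $H_{\alpha,0}^+$; hence it is some $H_{\alpha,1}$, and then $\alpha$ lies in exactly one of $F(R)$, $F(R')$, so $F(R)\ne F(R')$.

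Surjectivity is where I expect the effort to go. Given an order filter $F\subseteq\proots$ with complementary ideal $J=\proots\setminus F$, it suffices to show the open convex cone
\[
    R_F=\set{x\in V}{\gen{\alpha,x}>1\ (\alpha\in F),\ \ 0<\gen{\alpha,x}<1\ (\alpha\in J)}
\]
is nonempty, since it is then a region of $\shiarr$ lying in $C_e$ with $F(R_F)=F$, whence $\Psi(R_F)=\min F$. I would argue by induction on $\abs F$: the base case $F=\varnothing$ is the fundamental alcove $\set{x}{0<\gen{\alpha,x}<1\ \forall\alpha\in\proots}$, which is nonempty (take $\varepsilon v$ with $v$ in the interior of $C_e$ and $\varepsilon>0$ small). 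For the step, pick a minimal element $\beta$ of $F$; then $F'=F\setminus\{\beta\}$ is again an order filter, so by induction $R_{F'}\ne\varnothing$, and on $R_{F'}$ one has $\gen{\beta,x}\in(0,1)$. It now suffices to show that $H_{\beta,1}$ is a facet of $R_{F'}$ — equivalently, that the inequality $\gen{\beta,x}<1$ is not redundant on $R_{F'}$ — for then the region adjacent to $R_{F'}$ across $H_{\beta,1}$ is precisely $R_F$. Verifying this non-redundancy is the main obstacle, and is exactly the point at which the filter structure of $F$ (Shi's admissibility of the corresponding sign type) must be used. A less hands-on alternative: the introduction records that the number of regions of $\shiarr$ inside $C_e$ is the type $\Phi$ Catalan number, and if one also takes the number of antichains of the root poset to be that same Catalan number, then the injection $\Psi$ between two finite sets of equal cardinality is automatically onto.
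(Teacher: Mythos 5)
You should note first that the paper does not actually prove this statement: it is imported from Shi with a citation, and the surrounding text only describes the bijection (an antichain $A$ is sent to the region $R_A$ cut out by $\gen{\alpha,x}>1$ on the upper ideal $I_A$ and $0<\gen{\alpha,x}<1$ elsewhere) before declaring the inverse obvious. Your map $\Psi$ is exactly the inverse of that map, and your verifications that $F(R)$ is an order filter (upward closed because $\beta-\alpha\in\N\simples$ pairs nonnegatively with points of $C_e$) and that $R\mapsto F(R)$ is injective (a separating hyperplane inside $C_e$ must be some $H_{\alpha,1}$) are correct and complete --- indeed more than the paper itself writes down.

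The gap is where you yourself locate it: surjectivity. Your induction reduces everything to the claim that $H_{\beta,1}$ supports a facet of $R_{F'}$ when $\beta$ is minimal in $F$, equivalently that $R_F\neq\varnothing$ for every order filter $F$, and you do not prove this. That nonemptiness is precisely the nontrivial content of Shi's theorem (in Shi's language, the admissibility of every dominant sign type arising from an order filter), so as written your main argument only establishes an injection from dominant regions to antichains. The fallback you offer --- both finite sets have the Catalan cardinality, so the injection is onto --- is logically sound only if the count of dominant Shi regions is known \emph{independently} of this bijection; in the standard development, and in the way this paper presents it, that count is a consequence of the present theorem combined with the Catalan enumeration of antichains, so invoking it here is circular. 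To close the argument honestly you must either exhibit a point of $R_F$ for an arbitrary filter $F$ (or, equivalently, prove the facet claim directly, which is Shi's admissibility argument) or simply cite Shi for the surjectivity, as the paper does for the whole statement.
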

This bijection, originally noted by Shi in \cite{Shi_Number}, is given in the following way.
Given an antichain $A$ in $(\Phi^+, \leq)$, let ${I_A = \set{\alpha \in \Phi^+}{\alpha \geq \beta \text{ for some } \beta \in A}}$ be the upper ideal of $A$.
Then $A$ is mapped to the region $R_A$ in the dominant cone where for an arbitrary point $x$ in the relative interior of $R$ we have $\gen{\alpha, x} > 1$ for all $\alpha \in I_A$ and $0 < \gen{\alpha, x} <1$ otherwise.
In other words
\[
    R_A=\set{x}{\gen{\alpha, x} > 1 \text{ if } \alpha \in I_A \text{ and } 0 < \gen{\alpha, x} <1 \text{ otherwise}}
\]
The reverse is obvious.


In \cite{ArmstrongReinerRhoades_Parking}, Armstrong, Reiner and Rhoades refine the number of regions in the dominant cone.
They give a bijection between the number of regions in an arbitrary Weyl cone and the number of antichains in a certain subposet of the root poset, as stated in the following theorem.
\begin{theorem}[{\cite[Proposition 10.3]{ArmstrongReinerRhoades_Parking}}]
    \label{thm:num-in-cone}
    The number of regions in a Weyl cone $C_w$ is equal to the number of antichains in the subposet of the root poset restricted to $\Phi^+ \bs N(w^{-1})$.
\end{theorem}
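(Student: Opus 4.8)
The plan is to deduce the equality from Shi's bijection (\autoref{thm:bij-dom-anti}) by a ``delete--restrict'' argument, carried out after transporting the Weyl cone $C_w$ back to the dominant cone $C_e$ where the root poset lives.

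First I would localise the arrangement inside $C_w$. Since $C_w$ is an open chamber of $\weylarr$, which is a subarrangement of $\shiarr$, no hyperplane $H_{\alpha,0}$ meets the interior of $C_w$, so the regions of $\shiarr$ contained in $C_w$ are exactly the chambers cut out there by the translates $H_{\alpha,1}$, $\alpha\in\Phi^+$. Such a translate meets the interior of $C_w$ precisely when $\gen{\alpha,x}>0$ on $C_w$, that is, since $C_w=wC_e$, precisely when $\alpha\in\Phi^+\setminus N(w)=\Phi^+\cap w\Phi^+$. Applying the isometry $w^{-1}$ carries $C_w$ to $C_e$ and $H_{\alpha,1}$ to $H_{w^{-1}\alpha,1}$, and $w^{-1}\!\left(\Phi^+\cap w\Phi^+\right)=\Phi^+\cap w^{-1}\Phi^+=\Phi^+\setminus N(w^{-1})$. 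Writing $P:=\Phi^+\setminus N(w^{-1})$, I am left to count the chambers that the subarrangement $\mathcal{B}:=\set{H_{\beta,1}}{\beta\in P}$ induces inside $C_e$.

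Next I would count these by first adding back all of the missing translates. By \autoref{thm:bij-dom-anti}, the chambers of the full family $\set{H_{\beta,1}}{\beta\in\Phi^+}$ inside $C_e$ --- the dominant regions of $\shiarr$ --- are in bijection with the order filters $J$ of the root poset, a region $R$ corresponding to $J_R:=\set{\beta\in\Phi^+}{\gen{\beta,x}>1 \text{ for } x\in R}$. Every chamber $c$ of $\mathcal{B}$ inside $C_e$ contains one or more dominant regions $R$, all with the same intersection $J_R\cap P$; and since $C_e$ is convex, $c$ is recovered from this common subset of $P$. So the chambers of $\mathcal{B}$ in $C_e$ are in bijection with $\set{J\cap P}{J \text{ an order filter of }\Phi^+}$, and a short order-theoretic check identifies this family with the set of order filters of the induced subposet $(P,\leq)$ --- intersecting a filter of $\Phi^+$ with $P$ yields an upward-closed subset of $P$, and conversely any filter $K$ of $(P,\leq)$ equals $J\cap P$ for $J$ the filter of $\Phi^+$ generated by $K$. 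Since filters of $(P,\leq)$ correspond to its antichains via minimal elements, and $(P,\leq)$ is exactly the root poset restricted to $\Phi^+\setminus N(w^{-1})$, this gives the asserted count.

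The hard part is realizability: a priori it is not clear that every antichain --- equivalently, every order filter --- of $\Phi^+\setminus N(w^{-1})$ is actually witnessed by a nonempty region, since the corresponding system of strict inequalities over $C_e$ could in principle be inconsistent. The delete--restrict route makes this free, because each chamber of $\mathcal{B}$ already contains a genuine dominant region of $\shiarr$, whose nonemptiness is guaranteed by \autoref{thm:bij-dom-anti}. A self-contained argument would instead have to produce an explicit dominant point for each order filter of $(P,\leq)$, and there the work would be to use the structure of the inversion set $N(w^{-1})$ --- in particular that both $N(w^{-1})$ and its complement $P$ in $\Phi^+$ are closed under taking sums of roots --- to ensure that the inequalities are jointly satisfiable.
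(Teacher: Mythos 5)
Your proof is correct, but note that the paper itself offers no proof of this statement: it is imported verbatim as Proposition~10.3 of Armstrong--Reiner--Rhoades, so there is nothing in-paper to compare against. Your delete--restrict derivation is a sound way to obtain it from \autoref{thm:bij-dom-anti} alone. The localisation step is right (a root functional that is positive on the open cone $C_w$ attains every positive value there, so $H_{\alpha,1}$ meets $C_w$ exactly when $\alpha\in\Phi^+\setminus N(w)$), the transport by the isometry $w^{-1}$ correctly converts the index set into $P=\Phi^+\setminus N(w^{-1})$, and the identification of the chambers of $\set{H_{\beta,1}}{\beta\in P}$ inside $C_e$ with the sets $J\cap P$, for $J$ ranging over order filters of $\Phi^+$, works because each such chamber is convex (hence determined by its sign vector) and contains at least one dominant Shi region. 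The order-theoretic step --- that $\set{J\cap P}{J\text{ a filter of }\Phi^+}$ is exactly the set of filters of the induced subposet $(P,\leq)$, via the filter of $\Phi^+$ generated by a filter of $P$ --- is the standard computation you describe, and your closing observation about realizability is exactly the point that makes the argument go through: surjectivity onto filters of $(P,\leq)$ is witnessed by the nonempty region $R_J$ that Shi's bijection attaches to the generated filter $J$, so no separate consistency argument for the system of inequalities is needed. The only thing I would ask you to make explicit is that each of the finitely many sign patterns realised by a chamber of the subarrangement is of the form $J_R\cap P$ for some dominant region $R$ contained in it; with that sentence added, the argument is a complete, self-contained replacement for the citation.
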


This article gives formulas to directly calculate the number of antichains in the subposet of the root poset restricted to $\Phi^+ \bs N(w^{-1})$ using digraphs associated to the root poset.
For this we will need some theory on non-overlapping paths, which we describe in the following section.

\section{Non-overlapping paths and determinants}
\label{sec:determinant}
\label{sec:non-overlapping-paths}
In this section we describe certain families of non-overlapping paths and use the principal of inclusion-exclusion in order to enumerate them. 
These will be used to generate a formula for the enumeration of the number of regions in a Weyl cone.
For more background, the interested reader is directed to the chapter on lattice paths by Krattenthaler \cite{Krattenthaler_LatticePathEnumeration} or to the book by Stanley \cite{Stanley}.

\subsection{Directed graphs}
A \defn{directed graph} (or \defn{digraph} for short) is a graph $\Gamma = (V, E)$ with a set of vertices $V$ and a set of directed edges $E$.
A \defn{path} $\pi = (v_1, e_1, v_2, \ldots, v_{n-1}, e_{n-1}, v_n)$ in $\Gamma$ is an alternating sequence of vertices and edges such that $e_i$ is a directed edge in $E$ from $v_i$ to $v_{i-1}$.
For a given path $\pi$ let $I(\pi) = v_1$ and $F(\pi) = v_n$ be the \defn{initial} and \defn{final} vertices of $\pi$.
If $v_1 = v_n$ then $\pi$ is called a \defn{cycle}.
If $\Gamma$ has no cycles  then we say that $\Gamma$ is \defn{acyclic}.
We assume that all our digraphs are acyclic.

Let $\pi_1$ and $\pi_2$ be two paths.
Given a path $\pi_1 = (v_1, e_1, v_2, \ldots, v_{n-1}, e_{n-1},  v_n)$ we say that $\pi_1$ is a \defn{subpath} of ${\pi_2 = (u_1, f_1, u_2 \ldots, u_{m-1}, f_{m-1}, u_m)}$
if $\pi_1 = (f_i,u_{i+1},\ldots,u_{j},f_j)$ for some $1\leq i<j \leq m$. 
In other words, if the sequence of $\pi_1$ is a subsequence of $\pi_2$.
We say that $\pi_2$ \defn{overlaps} $\pi_1$ if either $\pi_1$ is a subpath of $\pi_2$ or there exists some $i \in \left[ n-1 \right]$ such that for all $j \in \left[ n-i \right]$, then  $e_{i+j-1} = f_{j}$, \ie if the final $i$ edges in $\pi_1$ coincide with the first $i$ edges of $\pi_2$.
Let $\Pi$ be a collection of paths.
Then we say that $\Pi$ is a \defn{non-overlapping collection of paths} if there does not exist any $\pi, \pi' \in \Pi$ such that $\pi$ overlaps $\pi'$.
In other words, we say that $\Pi$ is a non-overlapping collection of paths if no path in $\Pi$ overlaps some other path in $\Pi$.

\begin{example}
    The following are four collections of paths.
    Each collection contains two paths: a dotted red path on the bottom (which we call $\pi_1$), and a dashed blue path on the top (which we call $\pi_2$).
    \begin{center}
        \begin{tikzpicture}
            [thick,decoration={markings,mark=at position 1.0 with {\arrow{>}}}]    

            \begin{scope}[shift={(0,0)}]
                \draw[postaction={decorate}] (0,0) -- (1,0);
                \draw[postaction={decorate}] (1,0) -- (2,0);
                \draw[postaction={decorate}] (2,0) -- (3,0);
                \draw[blue, opacity=0.5, dashed, ultra thick] (0,0.05) -- (3,0.05);
                \draw[red, opacity=0.5, dotted, ultra thick] (1,-0.05) -- (2,-0.05);
                \node at (-0.3,- 0.2){ \color{red} $\pi_1$};
                \node at (-0.3, 0.2){ \color{blue} $\pi_2$};
            \end{scope}
            \begin{scope}[shift={(3.8,0)}]
                \draw[postaction={decorate}] (0,0) -- (1,0);
                \draw[postaction={decorate}] (1,0) -- (2,0);
                \draw[postaction={decorate}] (2,0) -- (3,0);
                \draw[red, opacity=0.5, dotted, ultra thick] (0,-0.05) -- (2,-0.05);
                \draw[blue, opacity=0.5, dashed, ultra thick] (1,0.05) -- (3,0.05);
            \end{scope}
            \begin{scope}[shift={(7.7,0)}]
                \draw[postaction={decorate}] (0,0) -- (1,0);
                \draw[postaction={decorate}] (1,0) -- (2,0);
                \draw[red, opacity=0.5, dotted, ultra thick] (0,-0.05) -- (1,-0.05);
                \draw[blue, opacity=0.5, dashed, ultra thick] (1,0.05) -- (2,0.05);
            \end{scope}
            \begin{scope}[shift={(10.3,0)}]
                \draw[postaction={decorate}] (0,0) -- (1,0);
                \draw[postaction={decorate}] (1,0) -- (2,0);
                \draw[postaction={decorate}] (2,0) -- (3,0);
                \draw[postaction={decorate}] (2,0) -- (2,1);
                \draw[red, opacity=0.5, dotted, ultra thick] (0,-0.05) -- (3,-0.05);
                \draw[blue, opacity=0.5, dashed, ultra thick] (1,0.05) -- (2.05,0.05) -- (2.05,1);
            \end{scope}
        \end{tikzpicture}
    \end{center}

    The first graph is an example of overlapping as $\pi_1$ is a subpath of $\pi_2$ implying that $\pi_2$ overlaps $\pi_1$.
    The second graph is another example of overlapping as $\pi_2$ overlaps $\pi_1$ since the first edge of $\pi_2$ and the final edge of $\pi_1$ coincide.
    The final two graphs are non-examples of overlapping (\ie they are non-overlapping) since the final \emph{edges} in $\pi_1$ are not the initial edges in $\pi_2$ and neither path is a subpath of the other.
\end{example}

To each edge $e \in E$ we associate some weight $\wt(e) \in \field$ where $\field$ is a field.
The \defn{weight of a path $\pi$} is then the multiplication of the weights of its edges:
${\wt(\pi) = \prod_{e \in \pi} \wt(e)}$.
Furthermore, the \defn{weight of a set of paths $\Pi$} is the sum of the weights of the paths: ${\wt(\Pi) = \sum_{\pi \in \Pi} \wt(\pi)}$.

\subsection{Non-overlapping paths}
\label{ssec:non-overlapping_paths}
Let $\Gamma$ be an acyclic digraph and
$\Pi = \left\{ \pi_1, \ldots, \pi_n \right\}$ a collection 
of non-overlapping paths in $\Gamma$.
We will use the principle of inclusion-exclusion in order to give a determinental formula for the number of paths 
between two vertices $v_1,v_2$ in $\Gamma$ which do not have any path in $\Pi$ as
a subpath.

We first set some notation.
Let $\Pi = \left\{ \pi_1, \ldots, \pi_n \right\}$ be a collection of paths.
For a given path $\pi_i$ we let $I_i = I(\pi_i)$, $F_i = F(\pi_i)$ be shorthand for the initial and final point of $\pi_i$.
For two vertices $v_1$ and $v_2$ we let $\gamma(v_1 \to v_2) = \wt\left(\set{\pi}{I(\pi) = v_1,\,F(\pi) = v_2}\right)$.
If no such path exists then $\gamma(v_1 \to v_2)$ is equal to $0$ and if $v_1 = v_2$ then $\gamma(v_1 \to v_1) = 1$.
Note that if all the weights are equal to $1$ then $\gamma(v_1 \to v_2)$ is just the number of paths from $v_1$ to $v_2$.

We will also sometimes restrict our paths.
Let
\[
    \gamma(v_1 \to v_2 \st \text{some property}) = \wt\left( \set{\pi}{I(\pi) = v_1,\,F(\pi) = v_2,\,\pi \text{ satisfies some property}} \right).
\]
Our two main examples of this are
\begin{align*}    
    \gamma(v_1 \to v_2 \st  & \text{has no subpath in }  \Pi)  \\ & = \wt\left( \set{\pi}{I(\pi) = v_1,\,F(\pi)=v_2,\,\pi' \text{ is not a subpath of } \pi \text{ for all } \pi'\in \Pi} \right), \text{ and}\\
    \gamma(v_1 \to v_2 \st & \text{has subpath in }   \Pi) \\ &= \wt\left( \set{\pi}{I(\pi) = v_1,\,F(\pi)=v_2,\,\pi' \text{ is a subpath of } \pi \text{ for some } \pi' \in \Pi}\right).
\end{align*}
For ease of notation, we let
\[
    \gamma(v_1 \xrightarrow{\pi} v_2) = \gamma(v_1 \to v_2 \st \text{has subpath in } \left\{\pi\right\}),
\]
which is essentially the  (weighted) number of paths from $v_1$ to $v_2$ which contain the subpath $\pi$.
Additionally, we note that due to the multiplicative nature of weights, we have
\[
    \gamma(v_1 \to v_2 \to v_3) = \gamma(v_1 \to v_2) \cdot \gamma(v_2 \to v_3).
\]

We next prove the main theorem for this section  where all weights are set to $1$.
This allows us to write a (slightly) more easy to read proof.
The proof for arbitrary weights is nearly identical (up to needing to keep track of the weights) which we discuss after.
We also note that by setting the weights equal to $1$, the following theorem counts precisely the number of paths.

\begin{theorem}
    \label{thm:path-nonoverlapping}
    Let $I$ and $F$ be two arbitrary vertices in an acyclic digraph $\Gamma$ where all weights are equal to $1$.
    Let ${\Pi = \left\{ \pi_1, \ldots, \pi_n \right\}}$ be a collection of non-overlapping paths.
 Then 
the number of paths in $\Gamma$ from $I$ to $F$ which do not contain any of the subpaths in $\Pi$ is 
equal to the determinant 
$\det(M_{\Pi})$ of the following matrix 
    \[
	M_{\Pi} = \begin{pmatrix}
            1 & \gamma{(F_2 \to I_1)} & \cdots & \gamma{(F_{n} \to I_1)} & \gamma{(I \to I_1)}\\
            \gamma{(F_1 \to I_2)} & 1 & \cdots & \gamma{(F_n\to I_2)} & \gamma{(I \to I_2)}\\
            \vdots & \vdots & \ddots & \vdots & \vdots\\
            \gamma{(F_1 \to I_{n})} & \gamma(F_2 \to I_n) & \cdots & 1 & \gamma{(I \to I_n)}\\
            \gamma{(F_1 \to F)} & \gamma{(F_2 \to F)} & \cdots & \gamma{(F_{n} \to F)} & \gamma{(I \to F)}\\
        \end{pmatrix}. 
    \]
       In other words, 
    $$\gamma\left( I \to F \st \text{has no subpath in }\Pi \right) = \det(M_{\Pi}). $$
\end{theorem}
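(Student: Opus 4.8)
The plan is to run an inclusion–exclusion over the subpaths in $\Pi$ and then recognise the resulting alternating sum as the permutation expansion of $\det(M_{\Pi})$; the two hypotheses — acyclicity of $\Gamma$ and the non-overlapping condition on $\Pi$ — are exactly what make the two sides match. For $i \in [n]$ let $A_i$ be the set of paths from $I$ to $F$ containing $\pi_i$ as a subpath, so the quantity we want is $\sum_{T \subseteq [n]} (-1)^{|T|}\,\bigl|\bigcap_{i\in T} A_i\bigr|$. Two structural observations drive everything. First, because $\Gamma$ is acyclic, no path $P$ can contain a fixed $\pi_i$ as a subpath more than once: a second occurrence would force $P$ to visit the vertex $I_i$ twice, producing a directed cycle. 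Second, because $\Pi$ is non-overlapping, whenever $P$ contains both $\pi_i$ and $\pi_j$ with $i\neq j$, the two occurrences must use disjoint sets of edges of $P$: a nested pair of occurrences would exhibit one of $\pi_i,\pi_j$ as a subpath of the other, and a properly overlapping pair would make a suffix of one coincide with a prefix of the other — both forbidden.

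Granting these, I would fix $T=\{t_1,\dots,t_k\}$ and decompose any $P\in\bigcap_{i\in T}A_i$ as follows: the $k$ occurrences of the $\pi_t$, $t\in T$, are pairwise edge-disjoint intervals in the edge sequence of $P$, hence linearly ordered, which determines an ordering $\sigma$ of $T$; cutting $P$ along these occurrences writes it uniquely as $Q_0\,\pi_{\sigma(1)}\,Q_1\,\pi_{\sigma(2)}\cdots\pi_{\sigma(k)}\,Q_k$ with $Q_0\colon I\to I_{\sigma(1)}$, $Q_j\colon F_{\sigma(j)}\to I_{\sigma(j+1)}$, and $Q_k\colon F_{\sigma(k)}\to F$ arbitrary paths. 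Conversely any such choice of $\sigma$ and glue paths reassembles to a path in $\bigcap_{i\in T}A_i$, and (using "each $\pi_t$ occurs exactly once") this is a bijection. Therefore $\bigl|\bigcap_{i\in T}A_i\bigr|=\sum_{\sigma}\gamma(I\to I_{\sigma(1)})\,\gamma(F_{\sigma(1)}\to I_{\sigma(2)})\cdots\gamma(F_{\sigma(k)}\to F)$, the sum over all orderings $\sigma$ of $T$.

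Next I would expand $\det(M_{\Pi})=\sum_{\tau}\sgn(\tau)\prod_r M_{\Pi}[r,\tau(r)]$ over permutations $\tau$ of $\{1,\dots,n+1\}$, where rows/columns $1,\dots,n$ carry the indices $I_1,\dots,I_n$ and $F_1,\dots,F_n$, and index $n+1$ carries $F$ (row) and $I$ (column). I would show every $\tau$ having a nontrivial cycle $(j_1,\dots,j_l)$, $l\ge 2$, contained in $\{1,\dots,n\}$ contributes $0$: its factor $\prod_m \gamma(F_{j_{m+1}}\to I_{j_m})$, if nonzero, would let us splice the $\pi_{j_m}$'s together with walks realising the $\gamma$'s into a closed walk of positive length in $\Gamma$, contradicting acyclicity. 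Hence the surviving $\tau$ are exactly those whose one nontrivial cycle contains $n+1$; writing such a cycle as $(n+1,i_1,i_2,\dots,i_k)$ with all other indices fixed, its term is $(-1)^k\,\gamma(F_{i_1}\to F)\,\gamma(F_{i_2}\to I_{i_1})\cdots\gamma(F_{i_k}\to I_{i_{k-1}})\,\gamma(I\to I_{i_k})$ (the fixed indices contribute the diagonal $1$'s). Reversing the list $i_1,\dots,i_k$ makes this coincide term-by-term with the $T=\{i_1,\dots,i_k\}$ summand from the previous paragraph, with the cycle length $k+1$ supplying the sign $(-1)^{|T|}$. Summing over all such $\tau$ and comparing with the inclusion–exclusion formula gives $\det(M_{\Pi})=\gamma(I\to F\mid \text{has no subpath in }\Pi)$.

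The step I expect to be the real obstacle is the structural one: proving rigorously that the decomposition $P=Q_0\,\pi_{\sigma(1)}\,Q_1\cdots\pi_{\sigma(k)}\,Q_k$ is unique and that reassembly is a genuine bijection, since this is precisely where acyclicity ("occurs at most once") and non-overlapping ("occurrences are edge-disjoint") are both indispensable and where careless counting would introduce over- or under-counting. A secondary care point is the determinant side: confirming that mixed permutations (a cycle through $n+1$ together with an extra nontrivial cycle) vanish, and tracking signs against cycle lengths. Finally, once the weight-$1$ case is settled, I would note that interpreting $\gamma$ throughout as the weighted count $\wt(\cdot)$ and using multiplicativity of weights under concatenation reproduces the weighted statement (\autoref{thm:nonoverlapping}) with no change to the argument.
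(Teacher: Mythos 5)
Your proposal is correct and follows essentially the same route as the paper's proof: expand the determinant over permutations, use acyclicity to kill every nontrivial cycle avoiding $n+1$, identify the surviving cycle terms with counts of paths containing the chosen $\pi_i$ in a prescribed order, and use the non-overlapping hypothesis to show these ordered counts assemble (bijectively, via the unique linear ordering of the edge-disjoint occurrences) into the inclusion--exclusion sum for $\gamma(I\to F\mid \text{has no subpath in }\Pi)$. The only difference is presentational — you start from the inclusion--exclusion side and work toward the determinant, whereas the paper goes the other way — and your explicit remark that a path cannot contain a fixed $\pi_i$ twice in an acyclic digraph is a useful small point the paper leaves implicit.
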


\begin{proof} 
	First recall that the determinant of any  $(n+1)\times(n+1)$ matrix $A = (a_{i,j})$ is equal to	
	\begin{align}
    	\det(A) = \sum_{\sigma \in S_{n+1}} \left(\sgn(\sigma) \prod_{i \in [n+1]}a_{i, \sigma(i)}\right) \label{dete1}
	\end{align}
	where $S_{n+1}$ is the symmetric group on $n+1$ elements.
	
	Let us write the elements of the $(n+1)\times(n+1)$ matrix $M_{\Pi}$ explicitly:
\begin{equation}
 a_{ij} = \begin{cases} 
 	1 & \text{ for } 1\leq i = j \leq n \\
 	\gamma{(F_j\to I_i)} & \text{ for } 1\leq i \neq j \leq n \\
	\gamma{(F_j \to F)} & \text{ for } 1\leq j \leq n \text{ and } i =n+1\\ 
	\gamma{(I \to I_i)} & \text{ for } 1\leq i \leq n \text{ and } j =n+1\\ 
    \gamma{(I\to F)}  & \text{ for } i =j=n+1.
\end{cases}
\label{elements_aij}
\end{equation}
		If we write each $\sigma \in S_{n+1}$ as a product of cycles,  the determinant in \autoref{dete1} can   be  expressed as 
	\begin{equation}
		\sum_{
			\substack{\sigma = c_1 c_2\cdots c_{\ell}  \\ \sigma \in S_{n+1}}} \left(\sgn(\sigma) 
		\prod_{i\in c_1} a_{i,c_1(i)} \prod_{i \in c_2} a_{i,c_2(i)} \cdots 
		\prod_{i \in c_{\ell}}a_{i, c_{\ell}(i)}
    \right).
		\label{dete2}
	\end{equation}

	We next claim that if $c_j$ is a 
 non-trivial cycle in $S_{n+1}$ not containing $n+1$, the corresponding product in \autoref{dete2} vanishes. 
 Indeed, 
let $c=(i_1 i_2\ldots i_k)$ be a non-trivial cycle (i.e. $k\geq 2$) in $S_{n+1}$ 
with $i_j \neq n+1$ for all $j$. 
  Then
 \begin{align}
 	 \prod_{i \in c} a_{i,c(i)}  & = a_{i_1,i_2} a_{i_2,i_3} \cdots a_{i_{k-1},i_k}  a_{i_{k},i_1} \notag \\
 	  & = 
 	  \gamma{(F_{i_2}\to I_{i_1})}  \gamma{(F_{i_3}\to I_{i_2})} \cdots  
 	   \gamma{(F_{i_k}\to I_{i_{k-1}})} 
 	    \gamma{(F_{i_1}\to I_{i_{k}})} \label{2} \\
 	   	 &=  \gamma{(I_{i_2} \xrightarrow{\pi_{i_2}} F_{i_2}\to I_{i_1})}  \gamma{( I_{i_3} \xrightarrow{\pi_{i_3}} F_{i_3}\to I_{i_2})} \cdots  
 	   	  \gamma{( I_{i_k} \xrightarrow{\pi_{i_k}} F_{i_k}\to I_{i_{k-1}})}
 	   	  \gamma{( I_{i_1} \xrightarrow{\pi_{i_1}} F_{i_1}\to I_{i_{k}})} \label{3}  \\
 	   	& =   \gamma(  I_{i_1} \xrightarrow{\pi_{i_1}} F_{i_1}\to I_{i_{k}} 
 	   	\xrightarrow{\pi_{i_k}} F_{i_k}\to I_{i_{k-1}} \to \cdots \to I_{i_2} \xrightarrow{\pi_{i_2}} F_{i_2}\to I_{i_1} )
\label{4}
        \\
        & = 0,
  \end{align} 
  where,  
  \begin{itemize}[leftmargin=*]
    \item  to go from \autoref{2} to \autoref{3} we used the fact that 
    the number $\gamma{(F_{i_j}\to I_{i_{j-1}})} $
    of paths from $F_{i_j}$ to $I_{i_{j-1}}$     
    remains the same if we prepend the path $\pi_{i_j}$ before $F_{i_j}$, and
  	\item 	the expression in \autoref{4} is the number of paths  which pass through $\pi_{i_1}, \pi_{i_k} \ldots,\pi_{i_2}$,  from $I_{i_1}$ to itself.  Since our digraph is acyclic there
  	do not  exist non trivial paths from a node to itself.  Since  $k\geq 2$ the paths in 
  	\autoref{4} are non trivial,  therefore the above number is 0. 
  \end{itemize}
\medskip
  
Thus, the only terms that survive the expression in \autoref{dete2} are those $\sigma \in S_{n+1}$
whose only possibly non trivial cycle is the one containing $n+1$.
Therefore the sum of the determinant \autoref{dete2} runs over all ${\sigma = (n+1\, i_1 i_2 \cdots i_{k}) \in S_{n+1}}$
written in cycle notation (with the trivial cycles omitted), 
with ${\{i_1,\ldots,i_k\} \subseteq \{1,\ldots,n\}}$. 
The trivial cycles of $\sigma$  will contribute terms of type $a_{ii}=\gamma(I_i\to I_i)$, which are equal to 1 (since the graph is acyclic, the only cycle from a node to itself is the trivial one).
Thus, we rewrite \autoref{dete2} as follows
\begin{align}
 &\sum_{k=0}^{n} 
 \sum_{  \substack{ I \subseteq \{1,\ldots,n\} \\ |I|=k}}
 \sum_{ \substack{ c=(n+1 i_1 i_2 \cdots i_k)    \\  \{i_1,\ldots,i_k\}=I }  }  \sgn{(c)} \prod_{i \in c } a_{i,c(i)} \notag \\
 = \gamma(I\to F)  +	&\sum_{k=1}^{n} 
 \sum_{  \substack{ I \subseteq \{1,\ldots,n\} \\ |I|=k}}
 \sum_{ \substack{ c=(n+1 i_1 i_2 \cdots i_k)    \\  \{i_1,\ldots,i_k\}=I }  }  \sgn{(c)} \prod_{i \in c } a_{i,c(i)} 	\label{dete3}
\end{align}

Let us now focus on an arbitrary single term of the above sum. 
If $c = ({n+1}\, i_1 i_2 \cdots i_k)$ then 
we have 
\begin{align}
 \sgn{(c)} \prod_{i \in c } a_{i,c(i)}
	        &  = (-1^k)
	 a_{n+1,i_1}\, a_{i_1,i_{2}} \cdots a_{i_{k-1},i_k}\, a_{i_k,n+1}  \label{dete4} \\
	 &  = (-1)^k 
	 \gamma(F_{i_1} \to F_{}) 
	    \gamma(F_{i_2} \to I_{i_1})  \cdots
	  \gamma(F_{i_k} \to I_{i_{k-1}})
	     \gamma(I_{} \to I_{i_k}) \notag \\
	  & = (-1)^k
	      \gamma(I_{i_1} \xrightarrow{\pi_{i_1}}  F_{i_1}\to F)
	      \gamma(I_{i_2} \xrightarrow{\pi_{i_1}}  F_{i_1}\to I_{i_{1}})
	  \cdots
	  \gamma(I_{i_k} \xrightarrow{\pi_{i_k}}  F_{i_k}\to I_{i_{k-1}})
	  \gamma(I\to I_{i_k})
	  \\
	  & = (-1)^k \gamma( 
          I\to I_{i_k} \xrightarrow{\pi_{i_k}} F_{i_k} \to   I_{i_{k-1}} \xrightarrow{\pi_{i_{k-1}}} F_{i_{k-1}}\to
	  \cdots \to  I_{i_1} \xrightarrow{\pi_{i_1}} F_{i_1} \to  F),
      \label{eq:9}
\end{align}
where the equalities use the same identities as those  in \autoref{2}, \autoref{3} and \autoref{4}.

Forgetting the sign for a moment, we claim the final equality \autoref{eq:9} is the number of all paths from $I$ to $F$ which pass through the subpaths $\pi_{i_k},\pi_{i_{k-1}},\ldots,\pi_{i_1}$ in this precise order since the paths are non-overlapping.
Suppose contrarily that there is some path $\pi$ from $I$ to $F$ which passes through all subpaths $\pi_{i_k}, \ldots, \pi_{i_1}$, but is not counted by \autoref{eq:9}.
Then there is some $F_{i_j}$ and some $I_{i_{j-1}}$ which both lie on $\pi$ such that $I_{i_{j-1}}$ lies before $F_{i_j}$ (forcing \autoref{eq:9} to be equal to $0$).
By construction, since $\pi_{i_j}$ and $\pi_{i_{j-1}}$ are subpaths of $\pi$, then $F_{i_{j-1}}$ is also on the path $\pi$ and it appears either before or after $F_{i_j}$ in $\pi$.
If $F_{i_{j-1}}$ comes before $F_{i_j}$ then $\pi_{i_{j-1}}$ is a subpath of $\pi_{i_j}$ and if $F_{i_{j-1}}$ comes after $F_{i_j}$ then $\pi_{i_{j-1}}$ overlaps $\pi_{i_j}$.
In both cases, we have a contradiction since $\Pi$ is a collection non-overlapping paths.

Therefore, remembering the sign, since the paths in $\Pi$ are pairwise non-overlapping, the quantity in \autoref{eq:9} is precisely $(-1)^k$ times the number of paths from $I$ to $F$ which contain the subpaths 
$\pi_{i_k},\pi_{i_{k-1}}\ldots,\pi_{i_1}$  in this precise order giving us:
\begin{align*}
  \sgn{(c)} \prod_{i \in c } a_{i,c(i)} = (-1)^k 
	 \gamma \Bigl(I\to F \st   \substack{\displaystyle \pi \text{ contains all } \pi_{i_k},\pi_{i_{k-1}},\ldots,\pi_{i_1} \text{ as subpaths}  \\ \displaystyle \text{ in this precise order}}  \Bigr),
\end{align*}
and hence, fixing a subset  $I \subseteq\{1,\ldots,n\}$ of size $k$
and summing over all possible 
cycles $c=(n+1 \,i_1 i_2 \cdots i_k)$ with 
$ \{ i_1,\ldots,i_k\} = I $, 
we obtain 
\begin{align*}
	\sum_{ \substack{ c=(n+1 i_1 i_2 \cdots i_k)    \\  \{i_1,\ldots,i_k\}=I }  }  \sgn{(c)} \prod_{i \in c } a_{i,c(i)} = (-1)^k 
	\gamma\Bigl(I\to F \st   \substack{\displaystyle \pi \text{ contains all } \pi_{i_1},\pi_{i_{2}},\ldots,\pi_{i_k} \text{ as subpaths}  \\ \displaystyle \text{ in whatever order}}  \Bigr)
\end{align*}
In view of the above, it is straightforward to see that 
\autoref{dete3} is an inlusion-exclusion:
\begin{align*}
	 & \gamma(I \to F) +\sum_{ \emptyset \subset I \subseteq \{1,\ldots,n\}} (-1)^{|I|} \gamma(I\to F\st\text{ contains all } 
	\pi_i \text{ with } i \in I) \\
	  & = 
	\gamma(I\to F\st \text{ has no subpath in }  \{\pi_1,\ldots, \pi_n\} ). 
\end{align*}
Therefore $\gamma\left( I \to F \st \text{ has no subpath in }\Pi \right) = \det(M_{\Pi})$.
\end{proof}

In the general case where we allow arbitrary weights, we have the following.
\begin{theorem}
    \label{thm:nonoverlapping}
    Let $I$ and $F$ be two arbitrary vertices in an acyclic digraph $\Gamma$
    and  $\Pi=$ $\left\{ \pi_1, \ldots, \pi_n \right\}$ be a collection of non-overlapping paths.
    Let
    \begin{align*}
        M_{\Pi} &=  \begin{pmatrix}
            \gamma{(I_1 \to I_1)} & \gamma{(I_2 \xrightarrow{\pi_2} F_2 \to I_1)} & \cdots & \gamma{(I_{n} \xrightarrow{\pi_n} F_{n} \to I_1)} & \gamma{(I \to I_1)}\\
            \gamma{(I_1 \xrightarrow{\pi_1} F_1 \to I_2)} & \gamma{(I_2 \to I_2)} & \cdots & \gamma{(I_{n} \xrightarrow{\pi_n} F_{n} \to I_2)} & \gamma{(I \to I_2)}\\
            \vdots & \vdots & \ddots & \vdots & \vdots\\
            \gamma{(I_1 \xrightarrow{\pi_1} F_1 \to I_{n})} & \cdots & \cdots & \gamma{(I_{n} \to I_{n})} & \gamma{(I \to I_n)}\\
            \gamma{(I_1 \xrightarrow{\pi_1} F_1 \to F)} & \gamma{(I_2 \xrightarrow{\pi_2} F_2 \to F)} & \cdots & \gamma{(I_{n} \xrightarrow{\pi_n} F_{n} \to F)} & \gamma{(I \to F)}\\
        \end{pmatrix}\\ \\
        &= \begin{pmatrix}
            1 & \wt(\pi_2)\cdot\gamma{(F_2 \to I_1)} & \cdots & \wt(\pi_n)\cdot\gamma{(F_{n} \to I_1)} & \gamma{(I \to I_1)}\\
            \wt(\pi_1)\cdot\gamma{(F_1 \to I_2)} & 1 & \cdots & \wt(\pi_n)\cdot\gamma{(F_n\to I_2)} & \gamma{(I \to I_2)}\\
            \vdots & \vdots & \ddots & \vdots & \vdots\\
            \wt(\pi_1)\cdot\gamma{(F_1 \to I_{n})} & \wt(\pi_2)\cdot \gamma(F_2 \to I_n) & \cdots & 1 & \gamma{(I \to I_n)}\\
            \wt(\pi_1)\cdot\gamma{(F_1 \to F)} & \wt(\pi_2)\cdot\gamma{(F_2 \to F)} & \cdots & \wt(\pi_n)\cdot\gamma{(F_{n} \to F)} & \gamma{(I \to F)}\\
        \end{pmatrix}
    \end{align*}
    Then $\gamma\left( I \to F \st \text{has no subpath in }\Pi \right) = \det(M_{\Pi})$.
\end{theorem}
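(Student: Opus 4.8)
The plan is to follow the proof of \autoref{thm:path-nonoverlapping} line by line, carrying the edge weights through each step. The first step is to justify that the two displayed forms of $M_\Pi$ coincide. Because $\Gamma$ is acyclic and $\pi_j$ begins at $I_j$, any path from $I_j$ to a vertex $v$ that contains $\pi_j$ as a subpath must have $\pi_j$ as an initial segment: a second visit to $I_j$ would create a non-trivial cycle. Hence such a path is the concatenation of $\pi_j$ with a path from $F_j$ to $v$, and multiplicativity of weights gives $\gamma(I_j \xrightarrow{\pi_j} F_j \to v) = \wt(\pi_j)\cdot\gamma(F_j \to v)$. Taking $v = I_i$ and $v = F$ identifies the off-diagonal entries and the last row of the two matrices, while the diagonal entries $\gamma(I_i\to I_i)=1$ and the last column already match. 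So it suffices to prove the statement for the first matrix.

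The second step is the permutation expansion. Expand $\det(M_\Pi)$ over $S_{n+1}$ as in \autoref{dete1} and group summands by the cycle decomposition of $\sigma$. For a non-trivial cycle $c=(i_1\,i_2\cdots i_k)$ with all $i_j\le n$, the product $\prod_{i\in c} a_{i,c(i)}$ telescopes, exactly as in the unweighted proof (using only $\gamma(u\to v\to w)=\gamma(u\to v)\gamma(v\to w)$ and the prefix identity above), into the weighted number of closed walks at $I_{i_1}$ that thread $\pi_{i_1},\dots,\pi_{i_k}$; since $k\ge 2$ these walks are non-trivial and $\Gamma$ is acyclic, so the product vanishes. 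Thus only the $\sigma$ whose sole possibly non-trivial cycle contains $n+1$ survive, the trivial cycles contributing factors $\gamma(I_i\to I_i)=1$.

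The third step handles a surviving $\sigma$ whose non-trivial cycle is $(n{+}1\ i_1\cdots i_k)$. The same telescoping, now with weights retained, gives
\[
	\sgn(\sigma)\prod_{i\in c} a_{i,c(i)} = (-1)^k\,\gamma\!\left(I \to I_{i_k} \xrightarrow{\pi_{i_k}} F_{i_k} \to \cdots \to I_{i_1} \xrightarrow{\pi_{i_1}} F_{i_1} \to F\right),
\]
and the purely combinatorial argument from the unweighted proof — that non-overlapping-ness forces $\pi_{i_1},\dots,\pi_{i_k}$ to occur inside any $I$-to-$F$ path carrying all of them in the single admissible order — applies verbatim, as it concerns the positions of the subpaths, not their weights. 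Summing over all cycles with a fixed support $J\subseteq[n]$ collapses the order constraint, and adding the $J=\emptyset$ term $\gamma(I\to F)$ yields
\[
	\det(M_\Pi) = \sum_{J\subseteq[n]} (-1)^{|J|}\,\gamma\!\left(I\to F \st \text{contains all }\pi_i,\ i\in J\right).
\]
The final step is weighted inclusion--exclusion: grouping the right-hand side by the path $\pi$ from $I$ to $F$, each $\wt(\pi)$ appears with coefficient $\sum_{J\subseteq S(\pi)}(-1)^{|J|}$, where $S(\pi)=\{\,i : \pi_i \text{ is a subpath of }\pi\,\}$, which equals $1$ if $S(\pi)=\emptyset$ and $0$ otherwise; hence the sum is $\gamma(I\to F\st\text{has no subpath in }\Pi)$.

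The only genuine point of care — and a fairly minor one — is the prefix identity of the first step and, more generally, ensuring that the weighted quantities threading several subpaths still factor through a \emph{unique} decomposition; but acyclicity (each vertex is visited at most once) together with the non-overlapping hypothesis makes every such decomposition unambiguous, so no idea beyond the unweighted argument is required.
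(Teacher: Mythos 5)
Your proposal is correct and follows essentially the same route as the paper, which simply observes that the unweighted proof of \autoref{thm:path-nonoverlapping} carries over verbatim once one records the identities $\gamma(I_i \xrightarrow{\pi_i} F_i) = \wt(\pi_i)$, $\gamma(I_i \to I_i) = 1$, and multiplicativity of weights along concatenations. Your extra care with the prefix identity (that acyclicity forces $\pi_j$ to be an initial segment of any path from $I_j$ containing it, so the decomposition is unique) and with the final weighted inclusion--exclusion is exactly the bookkeeping the paper leaves implicit.
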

\begin{proof}
    The proof for this is identical to the proof for \autoref{thm:path-nonoverlapping} with the additional fact that we must keep track of the weights.
    In particular, the elements of the $(n+1)\times(n+1)$ matrix are given by:
\begin{align*}
 a_{ij} = \begin{cases} 
    \gamma{(I_1 \to I_1)} & \text{ for } 1\leq i = j \leq n \\
    \gamma{(I_j \xrightarrow{\pi_j} F_j\to I_i)} & \text{ for } 1\leq i \neq j \leq n \\
	\gamma{(F_j \to F)} & \text{ for } 1\leq j \leq n \text{ and } i =n+1\\ 
	\gamma{(I \to I_i)} & \text{ for } 1\leq i \leq n \text{ and } j =n+1\\ 
	\gamma{(I\to F)}  & \text{ for } i =j=n+1
\end{cases}
\end{align*}
Combining this with the facts that $\gamma(I_i \xrightarrow{\pi_i} F_i) = \wt(\pi_i)$, $\gamma(I_i \to I_i) = 1$ and that weights multiply, gives us the desired results.
\end{proof}

\begin{remark}
    Although these matrices are large as the number of paths increases, these matrices are sparse meaning that they contain a large number of $0$s.
    This comes from the fact that for any distinct $i$ and $j$ then, since our digraph is acyclic, either $\gamma\left( F_i \to I_j \right) = 0$ or $\gamma\left( F_j \to I_i \right) = 0$ (or both).
    In other words, calculating the matrix becomes fairly fast as many entries are zero.
    For an example of this the reader is invited to look at the matrix in \autoref{ex:A5} in which $15$ out of $36$ entries of the matrix are $0$.
\end{remark}

\begin{examples}
    The condition that our collection be non-overlapping is necessary.
    The following two examples show why the definition of overlapping is as it is.
    This can be seen to be true in trivial cases (paths which are lines), but we give a slightly larger example to show the nuances.
    \begin{itemize}
        \item Suppose we have the following digraph where $\pi_1$ is red/dotted/shorter and $\pi_2$ is blue/dashed/longer.
            \begin{center}
                \begin{tikzpicture}
                    [thick,decoration={markings,mark=at position 0.9 with {\arrow{>}}}] 

                    \draw[postaction={decorate}] (0,0) -- (1,0);
                    \draw[postaction={decorate}] (1,0) -- (1,1);
                    \draw[postaction={decorate}] (1,1) -- (1,2);
                    \draw[postaction={decorate}] (1,1) -- (2,1);
                    \draw[postaction={decorate}] (2,1) -- (2,2);
                    \draw[postaction={decorate}] (0,0) -- (0,1);
                    \draw[postaction={decorate}] (0,1) -- (1,1);
                    \draw[postaction={decorate}] (0,1) -- (0,2);
                    \draw[postaction={decorate}] (0,2) -- (1,2);
                    \draw[postaction={decorate}] (1,2) -- (2,2);
                    \draw[ultra thick, dotted,opacity=0.7, red] (0, 0.05) -- (0.95,0.05) -- (0.95,1.05) -- (0.95,2.05);
                    \draw[ultra thick, dashed,opacity=0.7, blue] (0,-0.05) -- (1.05,-0.05) -- (1.05,0.95) -- (1.05,1.95) -- (1.95,1.95);
                    \draw[fill=Black] (0, 0) circle[radius=2pt, fill=Black] node[below left] {$I$};
                    \draw[fill=Black] (2, 2) circle[radius=2pt, fill=Black] node[above right] {$F$};
                \end{tikzpicture}    
            \end{center}
            Then we have the following matrix:
            \[
                \det\left( \begin{matrix}
                    1 &  0 & 1\\
                    0 & 1 & 1\\
                    1 & 1 & 5
            \end{matrix}\right) = 5 - 1 - 1 = 3, 
            \]
            even though the number of paths from $I$ to $F$ not containing $\pi_1$ nor $\pi_2$ as subpaths is equal to $4$.
        \item Suppose we have the following digraph where $\pi_1$ is red/dotted and $\pi_2$ is blue/dashed.
            \begin{center}
                \begin{tikzpicture}
                    [thick,decoration={markings,mark=at position 0.9 with {\arrow{>}}}]
                    \draw[postaction={decorate}] (0,0) -- (1,0);
                    \draw[postaction={decorate}] (1,0) -- (1,1);
                    \draw[postaction={decorate}] (1,1) -- (1,2);
                    \draw[postaction={decorate}] (1,1) -- (2,1);
                    \draw[postaction={decorate}] (2,1) -- (2,2);
                    \draw[postaction={decorate}] (0,0) -- (0,1);
                    \draw[postaction={decorate}] (0,1) -- (1,1);
                    \draw[postaction={decorate}] (0,1) -- (0,2);
                    \draw[postaction={decorate}] (0,2) -- (1,2);
                    \draw[postaction={decorate}] (1,2) -- (2,2);
                    \draw[ultra thick, dotted, red, opacity=0.7] (0,0.05) -- (0.95,0.05) -- (0.95,1.05);
                    \draw[ultra thick, dashed, blue,opacity=0.7] (1.05,-0.05) -- (1.05,0.95) -- (1.05,1.95);
                    \draw[fill=Black] (0, 0) circle[radius=2pt, fill=Black] node[below left] {$I$};
                    \draw[fill=Black] (2, 2) circle[radius=2pt, fill=Black] node[above right] {$F$};
                \end{tikzpicture}    
            \end{center}
            Then we have the following matrix
            \[
                \det\left( \begin{matrix}
                        1 & 0 & 1\\
                        0 & 1 & 1\\
                        2 & 1 & 5
                \end{matrix}\right) = 5 - 1 - 2 = 2, 
            \]
            even though the number of paths from $I$ to $F$ not containing $\pi_1$ nor $\pi_2$ as subpaths is equal to $3$.
    \end{itemize}
\end{examples}

\section{Root posets as digraphs}
\label{sec:root_posets_as_digraphs}
We will use non-overlapping paths in the setting of Weyl groups in order to enumerate the number of regions per Weyl cone.
In order to use non-overlapping paths, we must associate a digraph to each Weyl group.
In particular, we associate a digraph to each root poset $(\Phi^+, \leq)$ for a Weyl group $W$ where the paths in this digraph will be associated to antichains in the root poset.
These paths are then used for the enumeration of each region.
We do this in a case by case basis.

For ease of notation,
if $\alpha_1,\ldots,\alpha_n$ are the simple roots in $\Phi$, we write 
\begin{align}
	\alpha_{ij} &= \sum_{k = i}^j \alpha_k, \text{~and}\nonumber\\
	\alpha_{ij,\ell m} &= \alpha_{ij} + \alpha_{\ell m}\label{eq:shorthand}.
	\end{align}

We sometimes simplify $\alpha_{ii} $ to $\alpha_i$ and 	$\alpha_{ij,\ell \ell}$  to $\alpha_{ij,\ell}$.

\subsection{Shi's Diagrams}
\label{sssec:shis-diagrams}
In \cite{Shi_Number}, Shi describes a way to associate a diagram to a root poset (in a type by type manner for types $A$, $B$ and $D$) such that certain subdiagrams are associated to antichains in the root poset.
We describe these diagrams and subdiagrams next as they will be the starting point for our digraphs.

A \defn{diagram} $\Lambda$ is an array of boxes divided into rows and columns (potentially overlapping).
To each Weyl type $X$, we associate a particular diagram $\Lambda_X$ where the boxes are labelled by roots.

\subsubsection{Type $A$}
In type $A$ we let $\Lambda_{A_n}$ be the staircase Young diagram of size $n$ where the first row (on the bottom) has one box, the second row has two boxes, etc.
To the $i$th box in the $j$th row, we associate the root $\alpha_{ij}$.
The simple roots occupy the main diagonal boxes of the diagram and every other root 
is the sum of the simple roots lying to the south and to the east.
An example of $\Lambda_{A_3}$ can be found on the left of \autoref{fig:shi-diagram-ABD}.
\begin{figure}
    \begin{center}
        \begin{tikzpicture}
            \begin{scope}[shift={(0,0)}]
                \draw (0,0) -- (1,0);
                \draw (0,0) -- (0,3);
                \draw (1,0) -- (1,3);
                \draw (0,1) -- (2,1);
                \draw (2,1) -- (2,3);
                \draw (0,2) -- (3,2);
                \draw (3,2) -- (3,3);
                \draw (0,3) -- (3,3);
                \node at (0.5, 0.5) {$\alpha_{11}$};
                \node at (1.5, 1.5) {$\alpha_{22}$};
                \node at (2.5, 2.5) {$\alpha_{33}$};
                \node at (0.5, 1.5) {$\alpha_{12}$};
                \node at (0.5, 2.5) {$\alpha_{13}$};
                \node at (1.5, 2.5) {$\alpha_{23}$};
            \end{scope}
            \begin{scope}[shift={(4,0)}, xscale=1]
                \draw (0,0) -- (1,0);
                \draw (0,0) -- (0,5);
                \draw (1,0) -- (1,5);
                \draw (0,1) -- (2,1);
                \draw (2,1) -- (2,4);
                \draw (0,2) -- (3,2);
                \draw (3,2) -- (3,3);
                \draw (0,3) -- (3,3);
                \draw (0,4) -- (2,4);
                \draw (0,5) -- (1,5);
                \node at (0.5, 0.5) {$\alpha_{11}$};
                \node at (0.5, 1.5) {$\alpha_{12}$};
                \node at (0.5, 2.5) {$\alpha_{13}$};
                \node at (0.5, 3.5) {$\alpha_{13,33}$};
                \node at (0.5, 4.5) {$\alpha_{13,23}$};
                \node at (1.5, 1.5) {$\alpha_{22}$};
                \node at (1.5, 2.5) {$\alpha_{23}$};
                \node at (1.5, 3.5) {$\alpha_{23,33}$};
                \node at (2.5, 2.5) {$\alpha_{33}$};
            \end{scope}
            \begin{scope}[shift={(8,0)}, xscale=1.4]
            \draw[thick] (0,0) -- (0,7);
            \draw[thick] (1,0) -- (1,7);
            \draw[thick] (2,1) -- (2,6);
            \draw[thick] (3,2) -- (3,5);
            \draw[thick] (4,3) -- (4,4);
            \draw[thick] (0,0) -- (1,0);
            \draw[thick] (0,1) -- (2,1);
            \draw[thick] (0,2) -- (3,2);
            \draw[thick] (0,3) -- (4,3);
            \draw[thick] (0,4) -- (4,4);
            \draw[thick] (0,5) -- (3,5);
            \draw[thick] (0,6) -- (2,6);
            \draw[thick] (0,7) -- (1,7);

            \draw[thick] (0,4) -- (1, 3);
            \draw[thick] (1,4) -- (2, 3);
            \draw[thick] (2,4) -- (3, 3);
            \draw[thick] (3,4) -- (4, 3);

            \node at (0.5, 0.5) {\small$\alpha_{11}$};
            \node at (0.5, 1.5) {\small$\alpha_{12}$};
            \node at (1.5, 1.5) {\small$\alpha_{22}$};
            \node at (0.5, 2.5) {\small$\alpha_{13}$};
            \node at (1.5, 2.5) {\small$\alpha_{23}$};
            \node at (2.5, 2.5) {\small$\alpha_{33}$};

            \node at (0.38, 3.22) {\small$\alpha_{13,55}$};
            \node at (0.7, 3.75) {\small$\alpha_{14}$};
            \node at (1.38, 3.22) {\small$\alpha_{23,55}$};
            \node at (1.67, 3.75) {\small$\alpha_{24}$};
            \node at (2.38, 3.22) {\small$\alpha_{33,55}$};
            \node at (2.7, 3.75) {\small$\alpha_{34}$};
            \node at (3.38, 3.22) {\small$\alpha_{55}$};
            \node at (3.75, 3.75) {\small$\alpha_{44}$};

            \node at (0.5, 4.5) {\small$\alpha_{15}$};
            \node at (1.5, 4.5) {\small$\alpha_{25}$};
            \node at (2.5, 4.5) {\small$\alpha_{35}$};
            \node at (0.5, 5.5) {\small$\alpha_{15,33}$};
            \node at (1.5, 5.5) {\small$\alpha_{25,33}$};
            \node at (0.5, 6.5) {\small$\alpha_{15,23}$};
        \end{scope}
        \end{tikzpicture}
        \caption{The diagrams $\Lambda_{A_3}$ (left), $\Lambda_{B_3}$ (middle) and $\Lambda_{D_5}$ (right) as constructed by Shi.}
        \label{fig:shi-diagram-ABD}
    \end{center}
\end{figure}
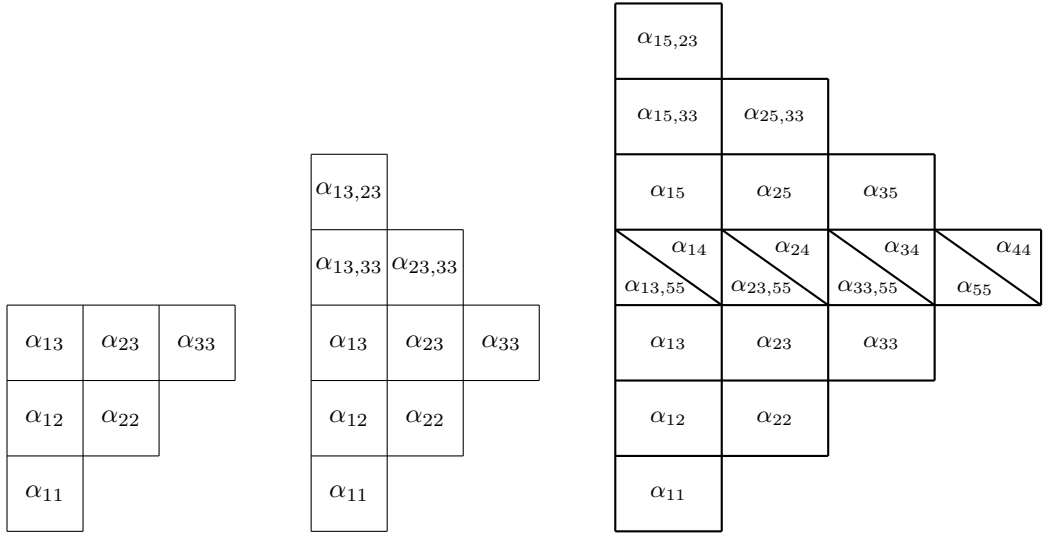

\subsubsection{Type $B$ (and $C$)}
Note that since $B_n \iso C_n$, it suffices to look at $B_n$ Weyl groups.
In type $B$ we let $\Lambda_{B_n}$ be the staircase skew Young diagram of size $n$ where the first row (on the bottom) has one box, the second row has two boxes, etc. up until the $n$th row and then decreasing back down to one box on the top row.
We label the boxes in the following way:
\begin{itemize}
    \item If $j \in \left\{ 1, \ldots, n \right\}$ then the $i$th box in the $j$th row is denoted by $\alpha_{ij}$.
    \item If $j \in \left\{ n+1, \ldots, 2n-1 \right\}$ then the $i$th box in the $j$th is denoted by $\alpha_{in,kn}$ where $k = 2n - j + 1$.
\end{itemize}
An example of $\Lambda_{B_3}$ can be found in the middle of \autoref{fig:shi-diagram-ABD}.

\subsubsection{Type $D$}
In type $D$ we let $\Lambda_{D_n}$ be the staircase skew Young diagram of size $n-1$ as defined for type $B$.
In addition, we duplicate the middle row so that two boxes are overlapping one another.
We label the boxes in the following way.
\begin{itemize}
    \item If $j \in \left\{ 1, \ldots, n-1 \right\}$ then the $i$th box in the $j$th row is denoted by $\alpha_{ij}$.
    \item If $j \in \left\{ n+2, \ldots, 2n-1 \right\}$ then the $ith$ box in the $j$th is denoted by $\alpha_{in,k\ell}$ where $k = 2n - j + 2$ and $\ell = n-2$.
    \item If $j = n+1$ then the $i$th box is denoted by $\alpha_{in}$.
    \item If $j = n$ then there are two boxes:
        \begin{itemize}
            \item The $i$th upper box (or top right corner in the figures) is denoted by $\alpha_{ik}$ where $k = n-1$.
            \item The $i$th lower box (or bottom left corner in the figures) is denoted by $\alpha_{ik,nn}$ where $k = n-2$ if $i \neq n-1$ else it's denoted by $\alpha_{nn}$.
        \end{itemize}
\end{itemize}

An example of the type $D_5$ diagram can be found on the right of \autoref{fig:shi-diagram-ABD}.

\subsubsection{Subdiagrams}
A \defn{subdiagram} $\lambda$ of a diagram $\Lambda$ is a subset of $\Lambda$ such that if a box is in $\lambda$ then every box to the north and to the west of the box is also in $\lambda$.
Shi showed that these subdiagrams are precisely the subsets of $\Lambda$ associated to antichains in the root poset (using upper ideals).
\begin{theorem}[{\cite{Shi_Number}}]
    \label{thm:boxes-iso-anti}
    Let $\Lambda_{X}$ be the diagram associated to a Weyl group $W$ of type ${X \in \left\{ A_n, B_n, C_n, D_n \right\}}$.
    Let $\Phi$ be an associated root poset of $W$.
    Then there is a bijection between subdiagrams of $\Lambda_{X}$ and antichains in $\Phi$.
\end{theorem}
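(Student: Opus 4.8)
The plan is to present the correspondence as a composite of two bijections: first, subdiagrams of $\Lambda_X$ correspond to order filters (up-sets) of the root poset $(\Phi^+,\leq)$; second, the order filters of any finite poset $Q$ correspond to its antichains via $A\mapsto\set{\beta\in Q}{\beta\geq\alpha\text{ for some }\alpha\in A}$, with inverse $U\mapsto\min(U)$. The second bijection is standard, so everything reduces to matching, under the box-labelling $b\mapsto\alpha_b$, the subdiagrams of $\Lambda_X$ with the up-sets of $\Phi^+$. First I would check that $b\mapsto\alpha_b$ is a bijection from the boxes of $\Lambda_X$ onto $\Phi^+$: this is a routine count type by type --- in type $A_n$ the $\binom{n+1}{2}$ boxes carry the roots $e_i-e_{j+1}$ ($1\le i\le j\le n$); in type $B_n\cong C_n$ the $n^2$ boxes carry $e_i-e_j$, $e_i+e_j$ ($i<j$) and $e_i$; and in type $D_n$ the $(n-1)^2+(n-1)=n(n-1)$ boxes, counting the doubled middle row, carry the $n(n-1)$ positive roots of $D_n$ --- the displayed labels being pairwise distinct and exhausting $\Phi^+$ in each case.

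The crux is the following local statement, which I will call $(\star)$: for a box $b$ of $\Lambda_X$ and a simple root $\alpha_s$, one has $\alpha_b+\alpha_s\in\Phi^+$ if and only if the box carrying the label $\alpha_b+\alpha_s$ is the box immediately to the north of $b$ or the box immediately to the west of $b$ (where in type $D$ ``immediately north/west'' includes the diagonal adjacencies visible in the picture of $\Lambda_{D_5}$). The ``if'' half is immediate from the labelling rules: a north-step or a west-step always changes the label by exactly one simple root --- for instance on the staircase part $\alpha_{i,j+1}-\alpha_{ij}=\alpha_{j+1}$ and $\alpha_{i-1,j}-\alpha_{ij}=\alpha_{i-1}$, and likewise on the decreasing rows and at the turning point --- so moving along any north/west path only ever increases the label in the root poset. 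The ``only if'' half --- that every way of adding a simple root to a box-label and staying in $\Phi^+$ is realised by a north- or west-step --- is the substantive content of Shi's construction and has to be checked case by case.

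Granting $(\star)$, the identification is short. Recall that the cover relations of a root poset are exactly the additions of a single simple root, so any $\alpha_b<\beta$ in $\Phi^+$ refines to a saturated chain $\alpha_b=\gamma_0\lessdot\gamma_1\lessdot\cdots\lessdot\gamma_m=\beta$ with $\gamma_{t+1}-\gamma_t\in\Delta$ for all $t$. If $\lambda$ is a subdiagram and $\alpha_b\in\lambda$, then applying $(\star)$ to each step shows $\gamma_{t+1}$ labels the north- or west-neighbour of the box of $\gamma_t$; since $\lambda$ is closed under passing to north and west neighbours, all of the $\gamma_t$ --- in particular $\beta$ --- lie in $\lambda$, so $\lambda$ is an up-set of $\Phi^+$. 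Conversely, if $U\subseteq\Phi^+$ is an up-set and $\alpha_b\in U$, then by $(\star)$ the labels of the boxes immediately north and immediately west of $b$ have the form $\alpha_b+\alpha_s$, hence exceed $\alpha_b$ and so lie in $U$; thus $U$ is a subdiagram. This yields the bijection between subdiagrams of $\Lambda_X$ and up-sets of $\Phi^+$, and composing with the antichain correspondence proves the theorem: explicitly, a subdiagram $\lambda$ maps to the antichain $\min(\lambda)$ of roots minimal in $\lambda$, and conversely an antichain $A$ maps to the subdiagram consisting of the boxes whose labels lie weakly above some element of $A$, in agreement with the upper-ideal description recalled after \autoref{thm:bij-dom-anti}.

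I expect the main obstacle to be the ``only if'' half of $(\star)$ in type $D_n$. Types $A_n$ and $B_n\cong C_n$ are short: the labels are essentially single or paired intervals, the root-poset covers are transparent, and the staircase shape (with at most one turn) makes the match with north/west adjacency immediate. Type $D_n$, by contrast, has a doubled middle row of overlapping boxes and a labelling that splits into several regimes (the increasing rows, the two overlapping boxes of row $n$, row $n+1$, and the decreasing rows), and one must verify that the adjacency structure indicated by Shi's diagram --- including its diagonal edges --- corresponds exactly to the covers $\gamma\lessdot\gamma+\alpha_s$ passing through those boxes; the covers involving the branch node $\alpha_{n-2}$ and the two simple roots $\alpha_{n-1}$ and $\alpha_n$ attached to it are the most delicate to bookkeep. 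This delicacy is precisely why the statement is credited to Shi, and \cite{Shi_Number} carries out exactly this case analysis.
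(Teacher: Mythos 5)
The paper offers no proof of this statement: it is imported from \cite{Shi_Number} (the surrounding text says only that ``Shi showed'' it), so there is no in-paper argument to measure yours against. On its own terms, your outline has the right architecture, and it is the natural one: factor the correspondence through order filters, use the standard filter--antichain bijection on a finite poset, and reduce the identification of subdiagrams with filters to the local statement $(\star)$ that the covers $\gamma\lessdot\gamma+\alpha_s$ of $(\Phi^+,\leq)$ are realised exactly by the north/west adjacencies of $\Lambda_X$. Your box counts are right, your ``if'' half of $(\star)$ is right, and the deduction from $(\star)$ to the theorem (via saturated chains refining any relation $\alpha_b<\beta$ into simple-root steps) is correct.

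The gap is that the ``only if'' half of $(\star)$ \emph{is} the theorem, and you assert it rather than prove it, deferring to \cite{Shi_Number}. As a citation-backed outline that is defensible; as a proof it is incomplete, and the incompleteness is concentrated exactly where you suspect. In type $D$ a naive reading of ``north/west'' fails: the two boxes occupying the same cell of the doubled middle row (in $\Lambda_{D_5}$, the pair $\alpha_{14}$ and $\alpha_{13,55}$, differing by $\alpha_4-\alpha_5$) must be shown to be \emph{incomparable}, so that neither is treated as north/west of the other and a subdiagram may contain one without the other --- the paper's own example of the antichain $\left\{\alpha_{13,55}\right\}$, whose subdiagram omits $\alpha_{14}$, shows this is not optional. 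One must also verify that every cover into and out of these cells is captured by the adjacencies (including the diagonal ones) drawn in \autoref{fig:shi-diagram-ABD}: e.g.\ both $\alpha_{14}$ and $\alpha_{13,55}$ cover $\alpha_{13}$, the box directly south, and both are covered by $\alpha_{15}$ in the row above, and no other covers occur. Until that case analysis is written out, the claim that ``subdiagram'' and ``order filter'' coincide is established only in types $A$ and $B$.
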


The diagrams $\Lambda_X$ will be used to construct our digraphs.
Before that, we set some notation for boxes and corners of boxes which we will use throughout the rest of this article.

\subsection{Boxes and corners}
\label{ssec:boxes_and_corners}

Let $\lambda$ be a box in some diagram $\Lambda$.
Note that the box has four edges and four vertices.
Suppose that $\alpha \in \Phi^+$ is the root associated to the box $\lambda$.
Then the vertices are labelled in the following way:
\begin{align*}
    \text{top left vertex}&:~v^{tl}&\text{top right vertex}&:~v^{tr} \\
    \text{bottom left vertex}&:~v^{bl}&\text{bottom right vertex}&:~v^{br}\\
\end{align*}

%

We give an orientation to the edges of every box such that the bottom left vertex $v^{bl}$ is the unique source.
Additionally, we will occasionally remove the top edge of a box, replacing the edge with a dashed edge to denote the edge is \emph{not} a part of the digraph.
After orientation, we let \defn{the corner associated to a box} be the (length $2$) subpath $\pi$ which goes from $v^{bl}$ to $v^{br}$ to $v^{tr}$.
In the notation of \autoref{sec:non-overlapping-paths}, then $v^{bl} = I(\pi)$ and $v^{tr} = F(\pi)$ in the corner associated to the box.
Here are the three possible orientations we will be working with where the corners are thickened.
\begin{center}
    \begin{tikzpicture}[thick,decoration={markings,mark=at position 0.9 with {\arrow{>}}}]    

        \begin{scope}[shift={(0,0)}]
            \draw[postaction={decorate}] (0,0) -- (0,1);
            \draw[ultra thick, postaction={decorate}] (0,0) -- (1,0);
            \draw[postaction={decorate}] (0,1) -- (1,1);
            \draw[ultra thick, postaction={decorate}] (1,0) -- (1,1);
            \draw[fill=Black] (0, 0) circle[radius=2pt, fill=Black] node[below left] {$v^{bl}$};
            \draw[fill=Black] (1, 1) circle[radius=2pt, fill=Black] node[above right] {$v^{tr}$};
        \end{scope}
        \begin{scope}[shift={(3,0)}]
            \draw[postaction={decorate}] (0,0) -- (0,1);
            \draw[ultra thick, postaction={decorate}] (0,0) -- (1,0);
            \draw[postaction={decorate}] (1,1) -- (0,1);
            \draw[ultra thick,postaction={decorate}] (1,0) -- (1,1);
            \draw[fill=Black] (0, 0) circle[radius=2pt, fill=Black] node[below left] {$v^{bl}$};
            \draw[fill=Black] (1, 1) circle[radius=2pt, fill=Black] node[above right] {$v^{tr}$};
        \end{scope}
        \begin{scope}[shift={(6,0)}]
            \draw[postaction={decorate}] (0,0) -- (0,1);
            \draw[ultra thick, postaction={decorate}] (0,0) -- (1,0);
            \draw[thick, dashed] (0,1) -- (1,1);
            \draw[ultra thick, postaction={decorate}] (1,0) -- (1,1);
            \draw[fill=Black] (0, 0) circle[radius=2pt, fill=Black] node[below left] {$v^{bl}$};
            \draw[fill=Black] (1, 1) circle[radius=2pt, fill=Black] node[above right] {$v^{tr}$};
        \end{scope}
    \end{tikzpicture}
\end{center}

\subsection{Types \texorpdfstring{$A$}{A} and \texorpdfstring{$B$}{B}}
\label{sssec:types_A_and_B}
For types $A$ and $B$, we use the diagrams as constructed in \cite{Shi_Number} (see \autoref{sssec:shis-diagrams}) as the underlying graph for our digraphs.
Examples of the digraphs constructed below for type $A_7$ and $B_4$ are found in \autoref{fig:gamma_AB} to help follow along with the constructions.
\begin{figure}
    \begin{center}
                \begin{tikzpicture}
            [thick,decoration={markings,mark=at position 0.9 with {\arrow{>}}}]    
            \begin{scope}[shift={(0,0)}]
                \draw[postaction={decorate}] (0,0) -- (0,1);
                \draw[postaction={decorate}] (0,1) -- (0,2);
                \draw[postaction={decorate}] (0,2) -- (0,3);
                \draw[postaction={decorate}] (0,3) -- (0,4);
                \draw[postaction={decorate}] (0,4) -- (0,5);
                \draw[postaction={decorate}] (0,5) -- (0,6);
                \draw[postaction={decorate}] (0,6) -- (0,7);
                \draw[postaction={decorate}] (0,0) -- (1,0);
                \draw[postaction={decorate}] (0,1) -- (1,1);
                \draw[postaction={decorate}] (0,2) -- (1,2);
                \draw[postaction={decorate}] (0,3) -- (1,3);
                \draw[postaction={decorate}] (0,4) -- (1,4);
                \draw[postaction={decorate}] (0,5) -- (1,5);
                \draw[postaction={decorate}] (0,6) -- (1,6);
                \draw[postaction={decorate}] (0,7) -- (1,7);
                \draw[postaction={decorate}] (1,0) -- (1,1);
                \draw[postaction={decorate}] (1,1) -- (1,2);
                \draw[postaction={decorate}] (1,2) -- (1,3);
                \draw[postaction={decorate}] (1,3) -- (1,4);
                \draw[postaction={decorate}] (1,4) -- (1,5);
                \draw[postaction={decorate}] (1,5) -- (1,6);
                \draw[postaction={decorate}] (1,6) -- (1,7);
                \draw[postaction={decorate}] (1,1) -- (2,1);
                \draw[postaction={decorate}] (1,2) -- (2,2);
                \draw[postaction={decorate}] (1,3) -- (2,3);
                \draw[postaction={decorate}] (1,4) -- (2,4);
                \draw[postaction={decorate}] (1,5) -- (2,5);
                \draw[postaction={decorate}] (1,6) -- (2,6);
                \draw[postaction={decorate}] (1,7) -- (2,7);
                \draw[postaction={decorate}] (2,1) -- (2,2);
                \draw[postaction={decorate}] (2,2) -- (2,3);
                \draw[postaction={decorate}] (2,3) -- (2,4);
                \draw[postaction={decorate}] (2,4) -- (2,5);
                \draw[postaction={decorate}] (2,5) -- (2,6);
                \draw[postaction={decorate}] (2,6) -- (2,7);
                \draw[postaction={decorate}] (2,2) -- (3,2);
                \draw[postaction={decorate}] (2,3) -- (3,3);
                \draw[postaction={decorate}] (2,4) -- (3,4);
                \draw[postaction={decorate}] (2,5) -- (3,5);
                \draw[postaction={decorate}] (2,6) -- (3,6);
                \draw[postaction={decorate}] (2,7) -- (3,7);
                \draw[postaction={decorate}] (3,2) -- (3,3);
                \draw[postaction={decorate}] (3,3) -- (3,4);
                \draw[postaction={decorate}] (3,4) -- (3,5);
                \draw[postaction={decorate}] (3,5) -- (3,6);
                \draw[postaction={decorate}] (3,6) -- (3,7);
                \draw[postaction={decorate}] (3,3) -- (4,3);
                \draw[postaction={decorate}] (3,4) -- (4,4);
                \draw[postaction={decorate}] (3,5) -- (4,5);
                \draw[postaction={decorate}] (3,6) -- (4,6);
                \draw[postaction={decorate}] (3,7) -- (4,7);
                \draw[postaction={decorate}] (4,3) -- (4,4);
                \draw[postaction={decorate}] (4,4) -- (4,5);
                \draw[postaction={decorate}] (4,5) -- (4,6);
                \draw[postaction={decorate}] (4,6) -- (4,7);
                \draw[postaction={decorate}] (4,4) -- (5,4);
                \draw[postaction={decorate}] (4,5) -- (5,5);
                \draw[postaction={decorate}] (4,6) -- (5,6);
                \draw[postaction={decorate}] (4,7) -- (5,7);
                \draw[postaction={decorate}] (5,4) -- (5,5);
                \draw[postaction={decorate}] (5,5) -- (5,6);
                \draw[postaction={decorate}] (5,6) -- (5,7);
                \draw[postaction={decorate}] (5,5) -- (6,5);
                \draw[postaction={decorate}] (5,6) -- (6,6);
                \draw[postaction={decorate}] (5,7) -- (6,7);
                \draw[postaction={decorate}] (6,5) -- (6,6);
                \draw[postaction={decorate}] (6,6) -- (7,6);
                \draw[postaction={decorate}] (6,7) -- (7,7);
                \draw[postaction={decorate}] (6,6) -- (6,7);
                \draw[postaction={decorate}] (7,6) -- (7,7);

                \draw[fill=Black] (0, 0) circle[radius=2pt, fill=Black] node[below left] {$I$};
                \draw[fill=Black] (7, 7) circle[radius=2pt, fill=Black] node[above right] {$F$};

                \node at (0.5, 0.5) {$\alpha_{11}$};
                \node at (0.5, 1.5) {$\alpha_{12}$};
                \node at (0.5, 2.5) {$\alpha_{13}$};
                \node at (0.5, 3.5) {$\alpha_{14}$};
                \node at (0.5, 4.5) {$\alpha_{15}$};
                \node at (0.5, 5.5) {$\alpha_{16}$};
                \node at (0.5, 6.5) {$\alpha_{17}$};
                \node at (1.5, 1.5) {$\alpha_{22}$};
                \node at (1.5, 2.5) {$\alpha_{23}$};
                \node at (1.5, 3.5) {$\alpha_{24}$};
                \node at (1.5, 4.5) {$\alpha_{25}$};
                \node at (1.5, 5.5) {$\alpha_{26}$};
                \node at (1.5, 6.5) {$\alpha_{27}$};
                \node at (2.5, 2.5) {$\alpha_{33}$};
                \node at (2.5, 3.5) {$\alpha_{34}$};
                \node at (2.5, 4.5) {$\alpha_{35}$};
                \node at (2.5, 5.5) {$\alpha_{36}$};
                \node at (2.5, 6.5) {$\alpha_{37}$};
                \node at (3.5, 3.5) {$\alpha_{44}$};
                \node at (3.5, 4.5) {$\alpha_{45}$};
                \node at (3.5, 5.5) {$\alpha_{46}$};
                \node at (3.5, 6.5) {$\alpha_{47}$};
                \node at (4.5, 4.5) {$\alpha_{55}$};
                \node at (4.5, 5.5) {$\alpha_{56}$};
                \node at (4.5, 6.5) {$\alpha_{57}$};
                \node at (5.5, 5.5) {$\alpha_{66}$};
                \node at (5.5, 6.5) {$\alpha_{67}$};
                \node at (6.5, 6.5) {$\alpha_{77}$};
                
            \end{scope}
            \begin{scope}[shift={(8,0)}, xscale=1.2]
                \draw[postaction={decorate}] (0,0) -- (0,1);
                \draw[postaction={decorate}] (0,1) -- (0,2);
                \draw[postaction={decorate}] (0,2) -- (0,3);
                \draw[postaction={decorate}] (0,3) -- (0,4);
                \draw[postaction={decorate}] (0,4) -- (0,5);
                \draw[postaction={decorate}] (0,5) -- (0,6);
                \draw[postaction={decorate}] (0,6) -- (0,7);
                \draw[postaction={decorate}] (0,0) -- (1,0);
                \draw[postaction={decorate}] (0,1) -- (1,1);
                \draw[postaction={decorate}] (0,2) -- (1,2);
                \draw[postaction={decorate}] (0,3) -- (1,3);
                \draw[postaction={decorate}] (0,4) -- (1,4);
                \draw[postaction={decorate}] (0,5) -- (1,5);
                \draw[postaction={decorate}] (0,6) -- (1,6);
                \draw[postaction={decorate}] (0,7) -- (1,7);
                \draw[postaction={decorate}] (1,0) -- (1,1);
                \draw[postaction={decorate}] (1,1) -- (1,2);
                \draw[postaction={decorate}] (1,2) -- (1,3);
                \draw[postaction={decorate}] (1,3) -- (1,4);
                \draw[postaction={decorate}] (1,4) -- (1,5);
                \draw[postaction={decorate}] (1,5) -- (1,6);
                \draw[postaction={decorate}] (1,6) -- (1,7);
                \draw[postaction={decorate}] (1,1) -- (2,1);
                \draw[postaction={decorate}] (1,2) -- (2,2);
                \draw[postaction={decorate}] (1,3) -- (2,3);
                \draw[postaction={decorate}] (1,4) -- (2,4);
                \draw[postaction={decorate}] (1,5) -- (2,5);
                \draw[postaction={decorate}] (2,6) -- (1,6);
                \draw[postaction={decorate}] (2,1) -- (2,2);
                \draw[postaction={decorate}] (2,2) -- (2,3);
                \draw[postaction={decorate}] (2,3) -- (2,4);
                \draw[postaction={decorate}] (2,4) -- (2,5);
                \draw[postaction={decorate}] (2,5) -- (2,6);
                \draw[postaction={decorate}] (2,2) -- (3,2);
                \draw[postaction={decorate}] (2,3) -- (3,3);
                \draw[postaction={decorate}] (2,4) -- (3,4);
                \draw[postaction={decorate}] (3,5) -- (2,5);
                \draw[postaction={decorate}] (3,2) -- (3,3);
                \draw[postaction={decorate}] (3,3) -- (3,4);
                \draw[postaction={decorate}] (3,4) -- (3,5);
                \draw[postaction={decorate}] (3,3) -- (4,3);
                \draw[postaction={decorate}] (4,4) -- (3,4);
                \draw[postaction={decorate}] (4,3) -- (4,4);
                \node at (0.5, 0.5) {$\alpha_{11}$};
                \node at (0.5, 1.5) {$\alpha_{12}$};
                \node at (0.5, 2.5) {$\alpha_{13}$};
                \node at (0.5, 3.5) {$\alpha_{14}$};
                \node at (1.5, 1.5) {$\alpha_{22}$};
                \node at (1.5, 2.5) {$\alpha_{23}$};
                \node at (1.5, 3.5) {$\alpha_{24}$};
                \node at (2.5, 2.5) {$\alpha_{33}$};
                \node at (2.5, 3.5) {$\alpha_{34}$};
                \node at (3.5, 3.5) {$\alpha_{44}$};
                \node at (0.5, 4.5) {$\alpha_{14,44}$};
                \node at (0.5, 5.5) {$\alpha_{14,34}$};
                \node at (0.5, 6.5) {$\alpha_{14,24}$};
                \node at (1.5, 4.5) {$\alpha_{24,44}$};
                \node at (1.5, 5.5) {$\alpha_{24,34}$};
                \node at (2.5, 4.5) {$\alpha_{34,44}$};
                \draw[fill=Black] (1, 7) circle[radius=2pt, fill=Black] node[above right] {$F$};
            \end{scope}
            \draw[fill=Black] (8, 0) circle[radius=2pt, fill=Black] node[below left] {$I$};
        \end{tikzpicture}
        \caption{The digraph $\Gamma_{A_7}$ (left) and $\Gamma_{B_4}$ (right) associated to the Shi arrangements of type $A$ and $B$.}
        \label{fig:gamma_AB}
    \end{center}
\end{figure}
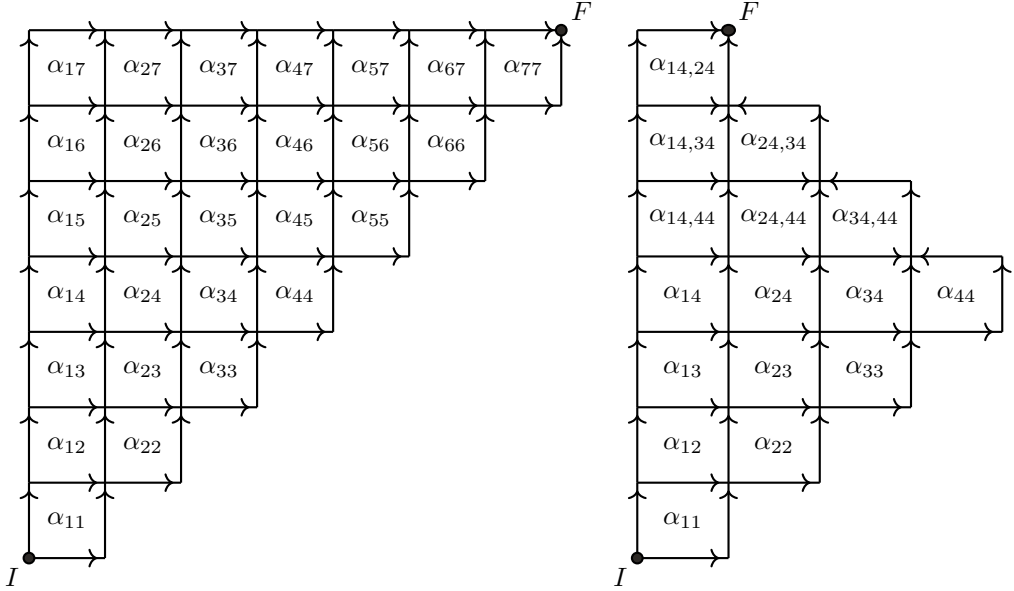

\paragraph{\textbf{Type \texorpdfstring{$A$}{A}:}}
For a type $A_n$ Weyl group, we let $\Lambda_{A_n}$ be the staircase Young diagram of size $n$ constructed in \autoref{sssec:shis-diagrams}.
It remains to give an orientation to the edges in $\Lambda_{A_n}$ to turn it into a digraph.
Let $\Gamma_{A_n}$ be the digraph whose underlying graph is $\Lambda_{A_n}$ where the edges are oriented in the following way:
\begin{itemize}
    \item all vertical edges are oriented towards the north and
    \item all horizontal edges are oriented towards the east.
\end{itemize}

\paragraph{\textbf{Type \texorpdfstring{$B$}{B}:}}
For a type $B_n$ Weyl group, we let $\Lambda_{B_n}$ be the staircase skew Young diagram of rank $n$ constructed in \autoref{sssec:shis-diagrams}.
It remains to give an orientation to the edges in $\Lambda_{B_n}$ to turn it into a digraph.
Let $\Gamma_{B_n}$ be the digraph whose underlying graph is $\Lambda_{B_n}$ where the edges are oriented in the following way:
\begin{itemize}
    \item all vertical edges are oriented towards the north,
    \item all horizontal edges at the top of the diagram (except the furthest north one) are oriented towards the west, and
    \item all other (horizontal) edges are oriented towards the east.
\end{itemize}

\paragraph{\textbf{The bijection:}}
We let $I$ denote the unique source (the vertex whose in-degree is $0$) and let $F$ denote the unique sink (the vertex whose out-degree is $0$).
The paths from $I$ to $F$ in the digraphs for types $A$ and $B$ are precisely the antichains in their respective root poset.
This proof is in essence the same proof used in \cite{Shi_Number}, but using the paths in the digraph instead of the boxes in the graph.
\begin{theorem}
    \label{thm:AB-bij}
    Let $\Gamma_X$ be the digraph constructed above where $X$ is of type $A_n$ or $B_n$.
    There is a bijection between paths in $\Gamma_X$ from $I$ to $F$ and subdiagrams of $\Lambda_X$.
    Moreover, there is a bijection between paths in $\Gamma_X$ and antichains in $(\Phi^+, \leq)$ where each antichain is equal to the set of roots associated to the corners of a path.
\end{theorem}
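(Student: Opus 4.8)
The plan is to build both bijections by going through the geometry of the diagram's frontier and then invoking Shi's \autoref{thm:boxes-iso-anti}. Identify each vertex of $\Gamma_X$ with a lattice point and each box of $\Lambda_X$ with a unit square. First I would record two elementary facts. Every vertex other than $(0,0)$ receives an incoming edge from its southern or western neighbour, and $F$ is likewise reached from one of its neighbours, so $I=(0,0)$ is the unique source and $F$ the unique sink; the orientation rule (vertical edges north, horizontal edges east, except that the top-boundary steps of $\Lambda_{B_n}$ point west) is exactly what keeps this true near the top boundary. Moreover, since the $y$-coordinate never decreases along a directed edge and $\Gamma_X$ is acyclic, any $I$-to-$F$ walk is a simple directed path and every vertex on it has a unique incoming edge.

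Next I would define the map $P\mapsto\lambda(P)$ from $I$-$F$ paths to subdiagrams: let $\lambda(P)$ be the set of boxes lying weakly to the north-west of $P$, equivalently the boxes $b$ such that $P$ runs weakly below and weakly to the right of $b$. Tracing $P$ one sees that $\lambda(P)$ is closed under passing to the northern and western box-neighbours, hence is a subdiagram; by Shi's \autoref{thm:boxes-iso-anti} this is the same as saying that $\lambda(P)$ is an up-set of $(\Phi^+,\leq)$. Conversely, for a subdiagram $\lambda$ its frontier --- the edges of $\Lambda_X$ separating $\lambda$ from its complement, concatenated with the appropriate stretches of the outer boundary of $\Lambda_X$ --- is a directed $I$-$F$ path $P(\lambda)$, and the orientation conventions, including the westward top steps in type $B$, are precisely what make this frontier a legal directed path. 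These two maps are mutually inverse, giving the first bijection.

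For the moreover part I would prove a Corner Lemma: for a box $b$, the path $P$ contains the corner of $b$, that is the length-two subpath $v^{bl}\to v^{br}\to v^{tr}$, if and only if $b$ is a minimal element of the subdiagram $\lambda(P)$. The local analysis is short. The bottom edge $v^{bl}\to v^{br}$ of $b$ lies on $P$ exactly when $b\in\lambda(P)$ and the box south of $b$ is not in $\lambda(P)$ (vacuously so when there is no such box), and similarly the right edge $v^{br}\to v^{tr}$ lies on $P$ exactly when $b\in\lambda(P)$ and the box east of $b$ is not in $\lambda(P)$. Because $P$ is a simple path, if both of these edges lie on $P$ then $P$ must enter $v^{br}$ by the first and leave by the second, so they are consecutive and $P$ contains the corner of $b$. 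Hence $P$ contains the corner of $b$ precisely when $b\in\lambda(P)$ while the southern and eastern box-neighbours of $b$ lie outside $\lambda(P)$. Since $\lambda(P)$ is an up-set, this last condition says that no lower cover of $b$ lies in $\lambda(P)$, i.e. that $b$ is minimal in $\lambda(P)$, once one knows that in Shi's diagrams of types $A$ and $B$ the southern and eastern box-neighbours of $b$ account for all of its lower covers, any further lower cover being dominated in the root poset by one of these two and therefore automatically excluded as well. Combining the Corner Lemma with the first bijection and with Shi's correspondence --- a subdiagram corresponds to the antichain of its minimal boxes --- identifies the antichain attached to $P$ with the set of roots labelling the corners of $P$, as required.

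The step I expect to be the main obstacle is controlling type $B$. One must check that the westward top-boundary edges do not create additional $I$-$F$ paths failing to be frontiers of subdiagrams, and, dually, that no box of $\Lambda_{B_n}$ outside the growing staircase picks up an unwanted corner on a path; equivalently, one must verify the claim that the southern and eastern neighbours of a box exhaust its lower covers up to poset-domination. These points are settled by unwinding the explicit labelling of $\Lambda_{B_n}$ given in \autoref{sssec:shis-diagrams}, and they are what makes the proof more than a one-line restatement of Shi's theorem. In type $A$, where $\Lambda_{A_n}$ is a genuine staircase and every box-neighbour relation is already a cover relation in $(\Phi^+,\leq)$, the Corner Lemma is immediate and the whole argument reduces to the classical bijection between monotone lattice paths and order ideals.
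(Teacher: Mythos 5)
Your proposal is correct and follows essentially the same route as the paper's proof: a north/east (and, in type $B$, westward-along-the-top) path splits the diagram, the boxes weakly to the north-west form the subdiagram, Shi's \autoref{thm:boxes-iso-anti} converts this to antichains, and the corners of the path are exactly the minimal boxes of that subdiagram. Your ``Corner Lemma'' is simply a carefully justified version of the paper's one-sentence closing observation, so there is nothing substantively different to compare.
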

\begin{proof}
    Let $\pi$ be a path in $\Gamma_X$ from $I$ to $F$.
    Note that once we reach an edge on top, there is no choice but to take the unique path directly to $F$.
    As the path $\pi$ can only go north and east until we reach a top edge, it will split the digraph into two sides.
    The set of roots on the north/west side of $\pi$ then define a subdiagram of $\Lambda_X$ as desired.
    The reverse map is clear and follows the same method.

    By \autoref{thm:boxes-iso-anti}, this implies there is a bijection between paths in $\Gamma_X$ and antichains in $(\Phi^+, \leq)$.
    Since our paths can only go north and east, then a root is added to the antichain precisely when the corner associated to that root is a subpath as desired.
\end{proof}

\subsection{Type \texorpdfstring{$D$}{D}}
We would like to define a type $D$ digraph in the same way we did for types $A$ and $B$ using Shi's diagrams.
But, as we will describe next, we run into a critical issue and must change tactics.
Recall the type $D$ diagram $\Lambda_{D_n}$ detailed in \autoref{sssec:shis-diagrams} (with an example in \autoref{fig:shi-diagram-ABD}).
We would like to orient $\Lambda_{D_n}$ in such a way that paths are in bijection with subdiagrams.
Since this diagram has overlapping boxes, we first strengthen our understanding of which boxes are present in a subdiagram by considering the following two examples which use $\Lambda_{D_5}$ in \autoref{fig:shi-diagram-ABD}.
Recall that a subdiagram associated to an antichain is the subset of $\Lambda$ which contains all boxes weakly to the north or to the west of all the boxes associated to roots in the antichain.
\begin{figure}
    \begin{center}
        \begin{tikzpicture}[xscale=1.4]
   \begin{scope}[shift={(0,0)}, scale=0.8]
	\draw[thick] (0,0) -- (0,7);
	\draw[thick] (1,0) -- (1,7);
	\draw[thick] (2,1) -- (2,6);
	\draw[thick] (3,2) -- (3,5);
	\draw[thick] (4,3) -- (4,4);
	\draw[thick] (0,0) -- (1,0);
	\draw[thick] (0,1) -- (2,1);
	\draw[thick] (0,2) -- (3,2);
	\draw[thick] (0,3) -- (4,3);
	\draw[thick] (0,4) -- (4,4);
	\draw[thick] (0,5) -- (3,5);
	\draw[thick] (0,6) -- (2,6);
	\draw[thick] (0,7) -- (1,7);
	
	\draw[thick] (0,4) -- (1, 3);
	\draw[thick] (1,4) -- (2, 3);
	\draw[thick] (2,4) -- (3, 3);
	\draw[thick] (3,4) -- (4, 3);
    
    \fill[green,opacity=0.1] (0,7) rectangle (1,2);
    \fill[green,opacity=0.1] (1,6) rectangle (2,2);

	\node at (0.5, 0.5) {\tiny$\alpha_{11}$};
	\node at (0.5, 1.5) {\tiny$\alpha_{12}$};
	\node at (1.5, 1.5) {\tiny$\alpha_{22}$};
	\node at (0.5, 2.5) {\tiny$\alpha_{13}$};
	\node at (1.5, 2.5) {\tiny$\alpha_{23}$};
	\node at (2.5, 2.5) {\tiny$\alpha_{33}$};
	
	\node at (0.38, 3.22) {\tiny$\alpha_{13,55}$};
	\node at (0.7, 3.75) {\tiny$\alpha_{14}$};
	\node at (1.38, 3.22) {\tiny$\alpha_{23,55}$};
	\node at (1.67, 3.75) {\tiny$\alpha_{24}$};
	\node at (2.38, 3.22) {\tiny$\alpha_{33,55}$};
	\node at (2.7, 3.75) {\tiny$\alpha_{34}$};
	\node at (3.38, 3.22) {\tiny$\alpha_{55}$};
	\node at (3.75, 3.75) {\tiny$\alpha_{44}$};
	
	\node at (0.5, 4.5) {\tiny$\alpha_{15}$};
	\node at (1.5, 4.5) {\tiny$\alpha_{25}$};
	\node at (2.5, 4.5) {\tiny$\alpha_{35}$};
	\node at (0.5, 5.5) {\tiny$\alpha_{15,33}$};
	\node at (1.5, 5.5) {\tiny$\alpha_{25,33}$};
	\node at (0.5, 6.5) {\tiny$\alpha_{15,23}$};
\end{scope}
\begin{scope}[shift={(4,0)}, scale=0.8]
	\draw[thick] (0,0) -- (0,7);
	\draw[thick] (1,0) -- (1,7);
	\draw[thick] (2,1) -- (2,6);
	\draw[thick] (3,2) -- (3,5);
	\draw[thick] (4,3) -- (4,4);
	\draw[thick] (0,0) -- (1,0);
	\draw[thick] (0,1) -- (2,1);
	\draw[thick] (0,2) -- (3,2);
	\draw[thick] (0,3) -- (4,3);
	\draw[thick] (0,4) -- (4,4);
	\draw[thick] (0,5) -- (3,5);
	\draw[thick] (0,6) -- (2,6);
	\draw[thick] (0,7) -- (1,7);
	
	\draw[thick] (0,4) -- (1, 3);
	\draw[thick] (1,4) -- (2, 3);
	\draw[thick] (2,4) -- (3, 3);
	\draw[thick] (3,4) -- (4, 3);
	
    \fill[green,opacity=0.1] (0,7) rectangle (1,4);
    \fill[green,opacity=0.1] (0,3) -- (0,4) -- (1,3) -- cycle;
	
	\node at (0.5, 0.5) {\tiny$\alpha_{11}$};
	\node at (0.5, 1.5) {\tiny$\alpha_{12}$};
	\node at (1.5, 1.5) {\tiny$\alpha_{22}$};
	\node at (0.5, 2.5) {\tiny$\alpha_{13}$};
	\node at (1.5, 2.5) {\tiny$\alpha_{23}$};
	\node at (2.5, 2.5) {\tiny$\alpha_{33}$};
	
	\node at (0.38, 3.22) {\tiny$\alpha_{13,55}$};
	\node at (0.7, 3.75) {\tiny$\alpha_{14}$};
	\node at (1.38, 3.22) {\tiny$\alpha_{23,55}$};
	\node at (1.67, 3.75) {\tiny$\alpha_{24}$};
	\node at (2.38, 3.22) {\tiny$\alpha_{33,55}$};
	\node at (2.7, 3.75) {\tiny$\alpha_{34}$};
	\node at (3.38, 3.22) {\tiny$\alpha_{55}$};
	\node at (3.75, 3.75) {\tiny$\alpha_{44}$};
	
	\node at (0.5, 4.5) {\tiny$\alpha_{15}$};
	\node at (1.5, 4.5) {\tiny$\alpha_{25}$};
	\node at (2.5, 4.5) {\tiny$\alpha_{35}$};
	\node at (0.5, 5.5) {\tiny$\alpha_{15,33}$};
	\node at (1.5, 5.5) {\tiny$\alpha_{25,33}$};
	\node at (0.5, 6.5) {\tiny$\alpha_{15,23}$};
\end{scope}
\end{tikzpicture}
        \caption{On the left is the subdiagram associated to the antichain $\left\{ \alpha_{23} \right\}$ and on the right is the subdiagram associated to the antichain $\left\{ \alpha_{13, 55} \right\}$.}
        \label{fig:D5antichains}
    \end{center}
\end{figure}
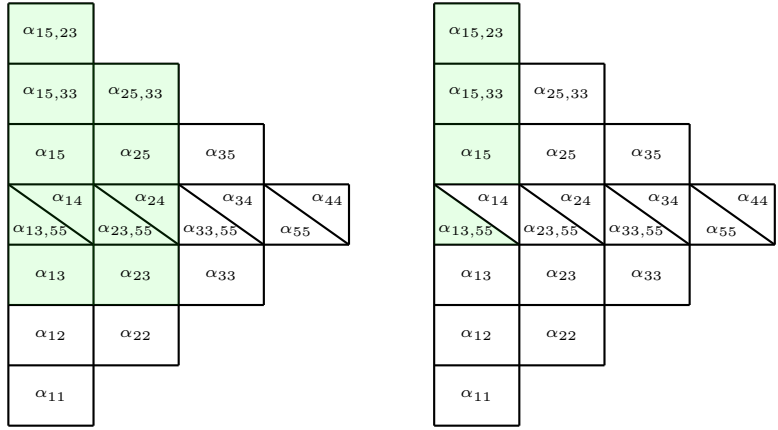
For example, in $\Lambda_{D_5}$ the antichain $\left\{ \alpha_{23} \right\}$ is associated to the subdiagram (see the left hand side of \autoref{fig:D5antichains}) whose boxes are labelled with the following roots:
\[
    \left\{ \alpha_{23},\, \alpha_{13},\, \alpha_{24},\, \alpha_{14},\, \alpha_{23,55},\, \alpha_{13,55},\, \alpha_{25},\, \alpha_{15},\, \alpha_{25,33},\,\alpha_{15,33},\, \alpha_{15,23} \right\}.
\]
On the other hand, the antichain $\left\{\alpha_{13,55}\right\}$ is associated to the subdiagram (see the right hand side of \autoref{fig:D5antichains}) whose boxes are labelled with the following roots:
\[
    \left\{ \alpha_{13,55},\, \alpha_{15},\, \alpha_{15,33},\, \alpha_{15,23} \right\}
\]
In particular, note that $\alpha_{14}$ is \emph{not} contained in the subdiagram of the antichain $\left\{ \alpha_{13,55} \right\}$ 
since $\alpha_{14}$ is above $\alpha_{13,55}$ (in the sense of overlapping) and \emph{not} to the north/west of it.

To understand why converting the diagram to a digraph by just orienting edges won't work, consider the subdiagrams associated to the following two antichains: $\left\{ \alpha_{24},\, \alpha_{55} \right\}$ and $\left\{ \alpha_{23,55},\, \alpha_{44} \right\}$ (see the left and middle figures in \autoref{fig:examplesD}).
It can be verified that no matter how we orient the edges in $\Lambda_{D_n}$, we will not be able to have paths which cut our diagram into two parts giving the associated subdiagrams.
Therefore, we must alter the diagram in some way.

Looking at the diagram for $D_5$, we notice that the main issue comes from these overlapping boxes in the middle.
To get around this, we construct a new diagram based off the original one.
We first split the diagram into four parts:
\begin{itemize}
    \item The bottom part, denoted by $\Gamma^1_{D_n}$, which contains the boxes to the south of the middle row.
    \item The middle upper part, denoted by $\Gamma^2_{D_n}$, which contains (multiple instances of) the boxes in the upper section in the middle row.
    \item The middle lower part, denoted by $\Gamma^3_{D_n}$, which contains (multiple instances of) the boxes in the lower section in the middle row.
    \item The top part, denoted by $\Gamma^4_{D_n}$, which contains the boxes to the north of the middle row.
\end{itemize}
We next describe the four diagrams in more detail, give orientations to the edges and describe the edges between the diagrams.
The reader is invited to follow along with the example in \autoref{fig:gamma_D} where, as a reminder, the dashed lines in the diagram imply the edge has been removed and is not part of the digraph.

\begin{figure}
	\begin{center}
		\input{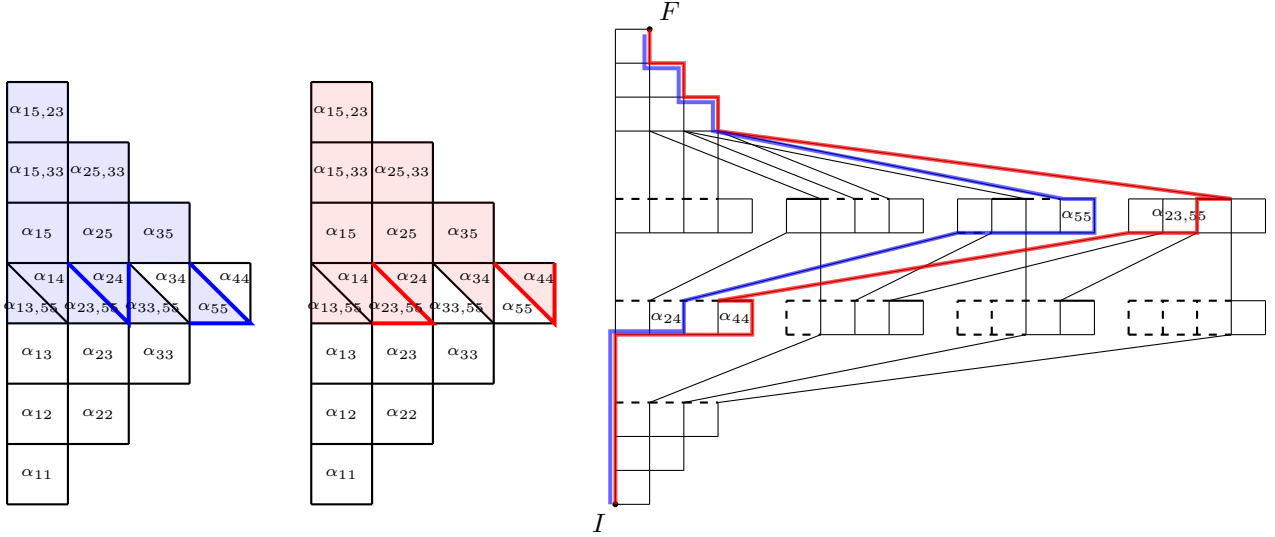}
		\end{center}
\caption{The antichains $\{\alpha_{24},\alpha_{55}\} $ and $\{\alpha_{23,55},\alpha_{44}\} $ in the root poset $D_5$ are represented by subdiagrams which are shaded in the diagram $\Lambda_{D_5}$. In the graph $\Gamma_{D_5}$ they corresppond to 
distinct paths, each having exactly two corners. 
}
		\label{fig:examplesD}
\end{figure}

\begin{itemize}
    \item[$\Gamma^1_{D_n}$:]
        The bottom part is a staircase shape which looks like a type $A$ diagram.
        Therefore we orient all vertical edges north and all horizontal edges east.
        Finally, we remove the top edges of the diagram.

    \item[$\Gamma^2_{D_n}$:]
        Let $\Gamma_{D_n}^2$ consist of $n-1$ disjoint diagrams, where the $i$th diagram is denoted by $\Gamma_{D_n}^{2,i}$ for $i \in \left[ n-1 \right]$.
        Then $\Gamma_{D_n}^{2,i}$ is a duplicate of the upper part in the middle row of Shi's diagram.
        The first $i-1$ vertical edges are removed and the rest are oriented north.
        The first $i-1$ horizontal edges on the bottom are removed and the rest are oriented east.
        Finally, the final top horizontal edge is oriented west and the rest are removed.
        As an example, we have
        \begin{center}
   	        \begin{tikzpicture}[thick,decoration={markings,mark=at position 0.9 with {\arrow{>}}},xscale=1.35]    
	\node at (-0.5,0.5) {$ \Gamma_{D_n}^{2,i}: $};
	\draw[postaction={decorate}] (4,0) -- (5,0);
	\draw[postaction={decorate}] (5,0) -- (6,0);
	\draw[postaction={decorate}] (6,0) -- (6.8,0);
	\draw[postaction={decorate}] (7.2,0) -- (8,0);
	\draw[postaction={decorate}] (8,0) -- (9,0);
	\draw[thick,dashed] (5,1) -- (6,1);
	\draw[thick,dashed] (6,1) -- (6.8,1);
	\draw[thick,dashed] (7.2,1) -- (9,1);
	\draw[postaction={decorate}] (10,1) -- (9,1);
	\draw[postaction={decorate}] (9,0) -- (10,0);
	\draw[postaction={decorate}] (10,0) -- (10,1);

	\draw[thick,dashed] (0,1) -- (2.8,1);
	\draw[thick,dashed] (3.1,1) -- (5,1);
	\draw[thick,dashed] (0,0) -- (2.8,0);
	\draw[thick,dashed] (3.2,0) -- (4,0);
	\draw[thick,dashed] (0,0) -- (0,1);
	\draw[thick,dashed] (1,0) -- (1,1);
	\draw[thick,dashed] (2,0) -- (2,1);
	\draw[postaction={decorate}] (4,0) -- (4,1);
	\draw[postaction={decorate}] (5,0) -- (5,1);
	\draw[postaction={decorate}] (6,0) -- (6,1);
	\draw[postaction={decorate}] (8,0) -- (8,1);
	\draw[postaction={decorate}] (9,0) -- (9,1);
	\node at (0.5, 0.5) {\scalebox{0.85}{$\alpha_{1n-1}$}};
	\node at (1.5, 0.5) {\scalebox{0.8}{$\alpha_{2n-1}$}};
	\node at (3, 0.5) {$\cdots$};
	\node at (4.5, 0.5) {\scalebox{0.85}{$\alpha_{in-1}$}};
	\node at (5.5, 0.5) {\scalebox{0.85}{$\alpha_{i+\!1n-1}$}};
	\node at (7, 0.5) {$\cdots$};
	\node at (8.5, 0.5) {\scalebox{0.85}{$\alpha_{n\!-\!2n\!-\!1}$}};
	\node at (9.5, 0.5){\scalebox{0.85}{$\alpha_{n\!-\!1n\!-\!1}$}};
	
\end{tikzpicture}
        \end{center}

    \item[$\Gamma^3_{D_n}$:]
        As in the previous case, $\Gamma_{D_n}^3$ consists of $n-1$ disjoint diagrams, where the $i$th diagram is denoted by $\Gamma_{D_n}^{3,i}$ for $i \in \left[ n-1 \right]$.
        Then $\Gamma_{D_n}^{3,i}$ is a duplicate of the lower part in the middle row of Shi's diagram.
        Every vertical edge is oriented to the north, the bottom edge of each box is oriented to the east, and the final horizontal edge on the top row is oriented to the west.
        Additionally, we orient the first $i-1$ horizontal edges on the top row to the east.
        All other edges are removed.
        As an example, we have
    \begin{center}
    	\begin{tikzpicture}[thick,decoration={markings,mark=at position 0.9 with {\arrow{>}}},xscale=1.35]    
	\node at (-0.5,0.5) {$ \Gamma_{D_n}^{3,i}: $};
	\draw[postaction={decorate}] (4,0) -- (5,0);
	\draw[postaction={decorate}] (5,0) -- (6,0);
	\draw[postaction={decorate}] (6,0) -- (6.8,0);
	\draw[postaction={decorate}] (7.2,0) -- (8,0);
	\draw[postaction={decorate}] (8,0) -- (9,0);
	\draw[postaction={decorate}] (0,1) -- (1,1);
	\draw[postaction={decorate}] (1,1) -- (2,1);
	\draw[postaction={decorate}] (2,1) -- (2.8,1);
	\draw[postaction={decorate}] (3.2,1) -- (4,1);
	\draw[postaction={decorate}] (4,1) -- (5,1);
	\draw[thick,dashed] (5,1) -- (6,1);
	\draw[thick,dashed] (6,1) -- (6.8,1);
	\draw[thick,dashed] (7.2,1) -- (9,1);
	\draw[postaction={decorate}] (10,1) -- (9,1);
	\draw[postaction={decorate}] (9,0) -- (10,0);
	\draw[postaction={decorate}] (10,0) -- (10,1);

	\draw[postaction={decorate}] (0,0) -- (1,0);
	\draw[postaction={decorate}] (1,0) -- (2,0);
	\draw[postaction={decorate}] (2,0) -- (2.8,0);
	\draw[postaction={decorate}] (3.2,0) -- (4,0);
	\draw[postaction={decorate}] (0,0) -- (0,1);
	\draw[postaction={decorate}] (1,0) -- (1,1);
	\draw[postaction={decorate}] (2,0) -- (2,1);
	\draw[postaction={decorate}] (4,0) -- (4,1);
	\draw[postaction={decorate}] (5,0) -- (5,1);
	\draw[postaction={decorate}] (6,0) -- (6,1);
	\draw[postaction={decorate}] (8,0) -- (8,1);
	\draw[postaction={decorate}] (9,0) -- (9,1);
	\node at (0.5, 0.5) {\scalebox{0.85}{$\alpha_{1n\!-\!2,nn}$}};
	\node at (1.5, 0.5) {\scalebox{0.85}{$\alpha_{2n\!-\!2,nn}$}};
	\node at (3, 0.5) {$\cdots$};
	\node at (4.5, 0.5)  {\scalebox{0.85}{$\alpha_{in\!-\!2,nn}$}};
	\node at (5.5, 0.5) {\scalebox{0.85}{$\alpha_{i+1n\!-\!2,nn}$}};
	\node at (7, 0.5) {$\cdots$};
	\node at (8.5, 0.5){\scalebox{0.85}{$\alpha_{n\!-\!2n\!-\!2,nn}$}};
	\node at (9.5, 0.5){\scalebox{0.85}{$\alpha_{nn}$}};
	
\end{tikzpicture}
    \end{center}

    \item[$\Gamma^4_{D_n}$:]
        The top part is a staircase shape which looks like the top half of a type $B$ diagram.
        Therefore we orient all vertical edges north, all horizontal edges below the top horizontal edges east, the topmost horizontal edge east  and all other top row horizontal edges west as in type $B$.

    \item[Between parts:]
        Finally, we must add directed edges between the different parts in order to make the digraph connected.
        We do this in the following way.
        \begin{itemize}
            \item[$\Gamma_{D_n}^1 \to \Gamma_{D_n}^2$:] There are $n-1$ sink vertices in $\Gamma_{D_n}^1$ (on the top row).
                Ordering these vertices from $1$ to $n-1$ (left to right), then the $i$th vertex has a directed edge to the bottom left vertex of the $i$th box in $\Gamma_{D_n}^{2,i}$.
            \item[$\Gamma_{D_n}^2 \to \Gamma_{D_n}^3$:] There are $n - i$ sink vertices in $\Gamma_{D_n}^{2,i}$ (on the top row).
                Ordering these vertices from $i$ to $n-1$ (left to right), then the $j$th vertex has a directed edge to the bottom left vertex of the $i$th box in $\Gamma_{D_n}^{3,j}$.
            \item[$\Gamma_{D_n}^3 \to \Gamma_{D_n}^4$:] There are $n-i$ sink vertices in $\Gamma_{D_n}^{3,i}$ (on the top row).
                Ordering these vertices from $i$ to $n-1$ (left to right), then the $j$th vertex has a directed edge to the bottom left vertex of the $j$th box in $\Gamma_{D_n}^4$.
        \end{itemize}
\end{itemize}
Then $\Gamma_{D_n}$ is the (connected) digraph obtained from the above process.
As mentioned, an example of $\Gamma_{D_5}$ is given in \autoref{fig:gamma_D} and the reader is invited to relook at the example now.
As a second example, the paths associated to the two antichains $\left\{ \alpha_{24}, \alpha_{55} \right\}$ and $\left\{ \alpha_{23, 55}, \alpha_{44} \right\}$ given earlier are in the rightmost figure in \autoref{fig:examplesD}.

\begin{figure}
    \begin{center}
        \input{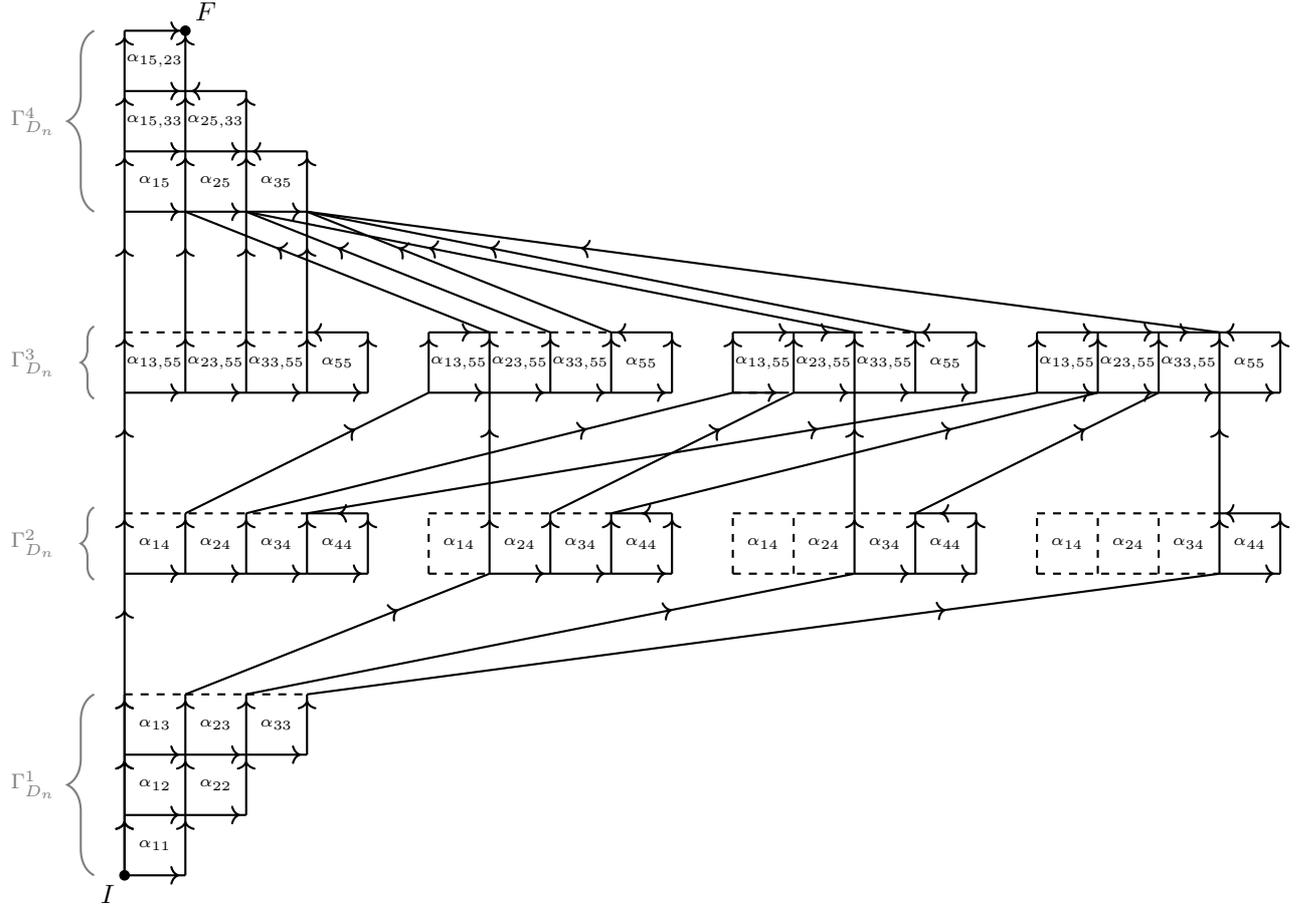}
        \caption{The diagram $\Gamma_{D_5}$ for the $D_5$ Shi arrangement.
        }
        \label{fig:gamma_D}
    \end{center}
\end{figure}


As before, we let $I$ denote the unique source (the vertex whose in-degree is $0$) and left $F$ denote the unique sink (the vertex whose out-degree is $0$).
\begin{theorem}
    \label{thm:D-bij}
    Let $\Gamma_{D_n}$ be the digraph obtained from the above algorithm.
    There is a bijection between paths in $\Gamma_{D_n}$ from $I$ to $F$ and antichains in $(\Phi^+, \leq)$.
    Moreover, the antichain is precisely the set of roots whose associated corners are subpaths of the given path.
\end{theorem}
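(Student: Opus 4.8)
The plan is to mimic the proof of \autoref{thm:AB-bij}. By \autoref{thm:boxes-iso-anti} it suffices to produce a bijection between paths in $\Gamma_{D_n}$ from $I$ to $F$ and subdiagrams of $\Lambda_{D_n}$ — where \emph{subdiagram} is understood in the sense made precise in the discussion around \autoref{fig:D5antichains}, so that the overlap between the two middle rows is accounted for — and then to check that the corners picked up along a path record exactly the antichain that generates the corresponding subdiagram.

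First I would describe the coarse structure of a path $\pi$ from $I$ to $F$. By the way the connecting edges are attached, $\pi$ must traverse $\Gamma^1_{D_n}$, then exactly one copy $\Gamma^{2,i}_{D_n}$, then exactly one copy $\Gamma^{3,j}_{D_n}$, then $\Gamma^4_{D_n}$, and nothing else: the exit vertex of $\pi$ from $\Gamma^1_{D_n}$ forces the index $i$, the exit vertex from $\Gamma^{2,i}_{D_n}$ forces $j$, and the exit vertex from $\Gamma^{3,j}_{D_n}$ forces the entry column of $\pi$ into $\Gamma^4_{D_n}$. This is also the point at which one records that $\Gamma_{D_n}$ is acyclic and that $I$ and $F$ really are its unique source and sink, so that the statement (and the machinery of \autoref{sec:non-overlapping-paths}) applies; this follows at once from the chosen orientations, since each of the four pieces is acyclic and all connecting edges point from $\Gamma^1_{D_n}$ towards $\Gamma^4_{D_n}$.

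Next, within each of the four pieces the restriction of $\pi$ is a monotone path that only goes north and east, together with the forced westward run along the top edges in the $B$-type piece $\Gamma^4_{D_n}$ and at the tops of the $\Gamma^{2,i}_{D_n}$ and $\Gamma^{3,i}_{D_n}$. Exactly as in \autoref{thm:AB-bij}, such a path cuts its piece into a north/west part and a south/east part, and I would check that the north/west parts of the four restrictions glue to a subset $\lambda(\pi)$ of the boxes of $\Lambda_{D_n}$, and that $\lambda(\pi)$ is a genuine subdiagram. The key structural point is that the copy index $i$ of $\Gamma^{2,i}_{D_n}$ (resp.\ $j$ of $\Gamma^{3,j}_{D_n}$) records exactly how many of the upper (resp.\ lower) middle-row boxes lie on the NW side: since $\pi$ must enter $\Gamma^{2,i}_{D_n}$ at its $i$th box and the removed edges prevent it from ever moving west of column $i$ there, the retained set in that row is always an initial segment of columns, which is precisely the subdiagram condition.

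For injectivity and surjectivity I would read $\lambda(\pi)$ off piece by piece and, conversely, show that any subdiagram $\lambda$ of $\Lambda_{D_n}$ determines a unique compatible choice of $i$, of $j$, and of monotone cuts in the four pieces; here one verifies that the pairs $(i,j)$ permitted by the connecting edges are exactly those that can arise from a subdiagram, taking the overlap convention into account (retaining an upper middle box such as $\alpha_{14}$ does not force the overlapping lower box $\alpha_{13,55}$ to be retained, or vice versa), with a short separate check for the exceptional labels $\alpha_{in}$, $\alpha_{ik,nn}$, $\alpha_{nn}$. The ``moreover'' claim is then handled as in \autoref{thm:AB-bij}: in each piece the roots whose corner is a subpath of $\pi$ are exactly the minimal boxes of the complement of the cut, i.e.\ the generators of the subdiagram in that region, and these fit together to give the antichain generating $\lambda(\pi)$. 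The routine part is the local ``a monotone path cuts a staircase into a subdiagram'' argument, identical to types $A$ and $B$; the real obstacle — and the reason $\Gamma_{D_n}$ looks so intricate in \autoref{fig:gamma_D} — is the bookkeeping of the connecting edges in the overlap region, which must be rigged so that no spurious path appears and no subdiagram is missed, a claim I expect to prove by a case analysis of which middle-row boxes lie NW of the cut, cross-checked against the configurations $\{\alpha_{24},\alpha_{55}\}$ and $\{\alpha_{23,55},\alpha_{44}\}$ of \autoref{fig:examplesD} that a naive single-copy orientation fails to realise.
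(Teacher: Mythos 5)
Your proposal is correct and follows essentially the same route as the paper's proof: reduce via \autoref{thm:boxes-iso-anti} to a bijection with subdiagrams of $\Lambda_{D_n}$, trace the path through $\Gamma^1_{D_n}$, a single copy $\Gamma^{2,i}_{D_n}$, a single copy $\Gamma^{3,j}_{D_n}$, and $\Gamma^4_{D_n}$, accumulate the boxes northwest of each traversed corner, and invert the construction. You are somewhat more explicit than the paper about verifying that the admissible index pairs $(i,j)$ match exactly the subdiagrams permitted by the overlap convention, but this is a matter of detail rather than a different argument.
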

\begin{proof}
    By \autoref{thm:boxes-iso-anti}, there is a bijection between a subdiagram of $\Lambda_{D_n}$ and an antichain in the root poset of $D_n$.
    We give a bijection between paths in $\Gamma_{D_n}$ and subdiagrams of $\Lambda_{D_n}$ which then naturally leads to a bijection with antichains in the root poset of $D_n$.
    
    Let $\pi$ be a path from $I$ to $F$ in $\Gamma_{D_n}$.
    We construct the set $\msB$ of boxes in $\Lambda_{D_n}$ in the following way.
    We break this down by parts for easier readability.
    \begin{enumerate}
        \item In $\Gamma_{D_n}^1$, the path can only go east and north.
            Each time the corner associated to a root $\alpha$ is a subpath of $\pi$, then we add the box associated to $\alpha$ into $\msB$ in addition to adding every box north and to the west of that box (in 	$\Lambda_{D_n}$).
            We eventually will hit the $i$th sink and will be sent to the bottom left vertex of the $i$th box in $\Gamma_{D_n}^{2,i}$.
        \item At this point we can only go north or east ensuring any box in $\Gamma_{D_n}^{2,i}$ already contained in $\msB$ is to the west of the path.
            Following the path, at some point we will be forced to go north.
            If we go east before going north, then we will have a corner associated to a root $\alpha$ as a subpath of $\pi$.
            We then add all boxes to $\msB$ which are to the north and to the west of $\alpha$ in $\Lambda_{D_n}$ (if they are not already in $\msB$).
            We eventually will hit the $j$th sink in $\Gamma_{D_n}^{2, i}$ which will send us to the bottom left vertex of the $i$th box in $\Gamma_{D_n}^{3,j}$.
        \item As before, all boxes in $\Gamma_{D_n}^{3,j}$ already contained in $\msB$ are to the west of the path.
            Following the path, at some point we will be forced to go north.
            If we go east before going north, then we will have a corner associated to a root $\alpha$ as a subpath of $\pi$.
            We then add all boxes to $\msB$ which are to the north and to the west of $\alpha$ in $\Lambda_{D_n}$ (if they are not already in $\msB$).
	        We eventually will hit the $k$th sink where $k \geq i,j$ (since the top row of $\Gamma_{D_n}^{3,j}$ contains $j-1$ east steps on top).
            Therefore, going from $\Gamma_{D_n}^{3,j}$ to $\Gamma_{D_n}^4$ we land in the $k$th box where $k \geq i, j$.
        \item As we land in the $k$th box and as we can only go north and east until we hit the top row, we ensure that all boxes to the north/west of the $k$th box are already contained in $\msB$.
            Following our path, we will again have corners associated to roots as subpaths of $\pi$.
            Adding all boxes associated to these roots and all boxes to the north/west of these boxes into $\msB$ will then give us the subdiagram desired.
    \end{enumerate}
    It is clear that reversing the process above allows you to go from a subdiagram to a path by first tracing the boxes in $\Gamma_{D_{n}}^{1}$ and then tracing the appropriate boxes in the middle row from $\Gamma_{D_n}^2$ and $\Gamma_{D_n}^3$ followed by adding all additional necessary boxes in $\Gamma_{D_n}^4$.
\end{proof}

\subsection{Exceptional types}
It remains to describe the digraphs in the exceptional cases.
Unfortunately, Shi in \cite{Shi_Number} does not describe diagrams associated to any of the exceptional Weyl group.
Luckily, \autoref{thm:num-in-cone} holds for \emph{any} finite Weyl group and therefore in the appendix we present digraphs for the exceptional type Weyl groups $E_6$, $F_4$ and $G_2$ such that the paths are in bijection with the antichains of the associated root poset.
The bijections were checked using sagemath \cite{sagemath}.
As the roots are more complicated, we set the following notation which is a generalisation of our previous root notation:
\[
    \alpha_{i_1j_1,\ldots,i_mj_m} = \sum_{n = 1}^m\alpha_{i_nj_n}.
\]
We leave the following open problems for the exceptional types.
\begin{openprob}
    Although we have a digraph for type $E_6$, digraphs associated to the types $E_7$ and $E_8$ Weyl groups are still unknown.
    As a first open problem we ask what are digraphs for $E_7$ and $E_8$ and is there some algorithmic way to generate them given an arbitrary type $E$ Weyl group?
    This will complete the classification of digraphs for all finite Weyl groups.

    Additionally, the digraphs in types $D$, $E$ and $F$ are ``complicated'' in the sense that there are many edges.
    As a second open problem, we ask whether the provided digraphs have a minimal number of edges.
    In particular, what are (the) minimal digraphs (in the sense of number of edges) that can be used for types $D$, $E$ and $F$?
    This would allow for simplified formulas for faster calculations of the number of regions in $C_w$, which, as can already be seen in the type $D$ formula in \autoref{lem:type_D_counting}, can get very complicated very quickly.
    Note that by ``minimal number of edges'' we allow for vertices with in-degree and out-degree equal to one as these can be trivially removed.
\end{openprob}

\section{Enumerating Regions in a Weyl cone}
\label{sec:Enumerating Regions in a Weyl cone}

From the previous section, we now have digraphs associated to most Weyl group such that the paths in the digraph are in bijection with antichains in the root poset.
We next tackle the question of \emph{how many} paths are there for a root poset restricted to a Weyl cone.
Recall from \autoref{thm:num-in-cone} that there is a bijection between antichains in the subposet $\Phi^+ \bs N(w^{-1})$ and the Weyl cone $C_w$.
Therefore it suffices to make a bijection between certain paths in our digraphs and antichains in the subposet $\Phi^+ \bs N(w^{-1})$.

\begin{theorem}
    \label{thm:nonoverlap-bij-antichains}
    Let $\Gamma_X$ be the digraph for a type $X$ Weyl group $W$ where $X \in \left\{ A_n, B_n, C_n, D_n, E_6, F_4,  G_2 \right\}$.
    Let $(\Phi^+, \leq)$ be the associated root poset for $W$.
    Given an element $w \in W$, then there is a bijection between antichains in the subposet $\Phi^+ \bs N(w^{-1})$ and paths in $\Gamma_X$ which do not contain a corner associated to a root in $N(w^{-1})$ as a subpath.
\end{theorem}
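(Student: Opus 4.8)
The plan is to observe that the claimed bijection is nothing more than the restriction of the path--antichain bijection already established in \autoref{thm:AB-bij} (types $A$, $B$, $C$), \autoref{thm:D-bij} (type $D$), and the analogous \texttt{sagemath}-verified statements for the exceptional types $E_6$, $F_4$, $G_2$. Write $\Psi$ for this bijection. Recall that by construction $\Psi$ sends a path $\pi$ from $I$ to $F$ in $\Gamma_X$ to the antichain $A_\pi = \set{\alpha \in \Phi^+}{\text{the corner associated to }\alpha\text{ is a subpath of }\pi}$, and that every antichain of $(\Phi^+,\le)$ equals $A_\pi$ for a unique such $\pi$.

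First I would check that the antichains of the subposet obtained by restricting $(\Phi^+,\le)$ to $\Phi^+ \bs N(w^{-1})$ are exactly the antichains $A$ of $(\Phi^+,\le)$ with $A \cap N(w^{-1}) = \emptyset$. This is immediate: a subset $A \subseteq \Phi^+ \bs N(w^{-1})$ consists of pairwise incomparable elements of the subposet precisely when it consists of pairwise incomparable elements of $(\Phi^+,\le)$, since the subposet carries the induced order. Thus passing to the subposet neither creates nor destroys antichains; it merely removes from consideration those that meet $N(w^{-1})$.

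Next I would translate this condition across $\Psi$. Since $A_\pi$ is by definition the set of roots whose corners occur as subpaths of $\pi$, we have $A_\pi \cap N(w^{-1}) = \emptyset$ if and only if $\pi$ contains no corner associated to a root of $N(w^{-1})$. Combining this with the previous paragraph, $\Psi$ restricts to a bijection from the set of paths in $\Gamma_X$ from $I$ to $F$ that contain no corner of a root in $N(w^{-1})$ onto the set of antichains of the subposet $\Phi^+ \bs N(w^{-1})$, which is exactly the assertion of the theorem.

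The substance of the argument is therefore carried entirely by the earlier theorems; there is no genuine obstacle here, and no new case analysis is needed. The one point that actually depends on the (type-by-type) construction of $\Gamma_X$ is the refinement that the roots contributing a corner to a path coincide with the elements of the corresponding antichain, rather than merely that the two families have equal cardinality --- and this is precisely what \autoref{thm:AB-bij}, \autoref{thm:D-bij}, and the exceptional-type verification supply. I expect the only care needed is to invoke those bijections in that refined form.
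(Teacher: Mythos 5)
Your proposal is correct and follows essentially the same route as the paper, whose proof is simply the one-line observation that the statement is a corollary of \autoref{thm:AB-bij} and \autoref{thm:D-bij} (plus the sagemath verification for $E_6$, $F_4$, $G_2$). You have merely spelled out the details the paper leaves implicit, correctly isolating the one point that matters: the earlier bijections identify the antichain with the \emph{set} of roots whose corners occur as subpaths, so restricting to antichains avoiding $N(w^{-1})$ corresponds exactly to paths avoiding those corners.
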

\begin{proof}
    For types $A$, $B$, $C$ and $D$, this is a natural corollary of \autoref{thm:num-in-cone}, \autoref{thm:AB-bij} and \autoref{thm:D-bij}.
    For types $E$, $F$ and $G$, this was verified using sagemath \cite{sagemath}.
\end{proof}

At first sight this theorem might not seem to give us much new information as we are just converting one problem (antichains) to another problem (paths in a digraph).
Luckily, these digraphs are constructed in such a way which makes counting computationally easier.

\subsection{Type \texorpdfstring{$A$}{A}}
\label{ssec:type-A-counting}
In type $A$ we can associate to each vertex in $\Gamma_{A_n}$ a coordinate in the $\Z^2$-lattice.
In particular, we let $I$ be the vertex $(0, 1)$ and $F$ be the vertex $(n, n+1)$.
We make this choice so that every vertex in $\Gamma_{A_n}$ is associated to a vertex weakly above the main ($x = y$) diagonal.
This allows us to use the following theorem:
\begin{theorem}[{\cite[Theorem 10.3.1]{Krattenthaler_LatticePathEnumeration}}]
    \label{thm:count-above-diag}
    Let $\Gamma$ be the infinite digraph of $\Z^2$ with vertical edges pointing north and horizontal edges pointing east.
    Label every vertex of $\Gamma$ by its respective coordinates in $\Z^2$.
    Then 
    \begin{align*}
        \gamma((x_1, y_1) \to (x_2, y_2) \st & \text{weakly above } x = y)\\
         &  = \begin{cases}
    	\nabovediag{x_1}{y_1}{x_2}{y_2} & \text{if } x_1 \leq x_2 \text{~and~} y_1 \leq y_2\\
    	0 & \text{otherwise.}
    \end{cases}
    	    \end{align*}
\qed
   \end{theorem}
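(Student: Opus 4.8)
The plan is to prove the formula by the classical reflection argument of Andr\'e. A path in $\Gamma$ from $(x_1,y_1)$ to $(x_2,y_2)$ is a monotone lattice path made of unit north and east steps, so it exists only when $x_1\le x_2$ and $y_1\le y_2$, which disposes of the ``otherwise'' case; I assume these inequalities from now on. In every application in this paper the two endpoints are vertices of $\Gamma_{A_n}$ and hence lie weakly above the diagonal, so I also assume $y_1\ge x_1$ and $y_2\ge x_2$; some such hypothesis is genuinely needed, since outside this range the left-hand side is $0$ while the right-hand side in general is not (the two binomial coefficients do collapse to each other in the degenerate subcases $y_1=x_1-1$ or $y_2=x_2-1$). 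Under these hypotheses, a path stays weakly above $x=y$ precisely when every lattice point $(a,b)$ it visits satisfies $b-a\ge 0$.

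First I would count \emph{all} monotone paths from $(x_1,y_1)$ to $(x_2,y_2)$ with no constraint: such a path has $x_2+y_2-x_1-y_1$ steps, of which $y_2-y_1$ are north, so there are $\binom{x_2+y_2-x_1-y_1}{y_2-y_1}$ of them, which is the first binomial coefficient. It then suffices to show that the number of \emph{bad} paths, those visiting a point with $b-a<0$, equals the second binomial coefficient $\binom{x_2+y_2-x_1-y_1}{y_2-x_1+1}$. Since each step changes $b-a$ by exactly $\pm 1$ and the path starts with $b-a=y_1-x_1\ge 0$, a bad path has a well-defined first lattice point on the line $y=x-1$.

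The key step is the reflection bijection. Given a bad path, apply to its initial segment, up to and including that first point on $y=x-1$, the reflection $(a,b)\mapsto(b+1,a-1)$ in the line $y=x-1$; this fixes the line, interchanges north and east steps, and is applied only to the initial segment, the rest of the path being kept unchanged. The result is again a monotone north/east path, now running from the reflected start $(y_1+1,x_1-1)$ to $(x_2,y_2)$. Conversely, $y_1\ge x_1$ gives $(y_1+1,x_1-1)$ the value $b-a=x_1-y_1-2\le -2$, so it lies strictly below $y=x-1$, whereas $y_2\ge x_2$ puts $(x_2,y_2)$ weakly above that line; hence \emph{every} monotone path from $(y_1+1,x_1-1)$ to $(x_2,y_2)$ meets $y=x-1$, and reflecting its initial segment up to the first such point inverts the construction. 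Thus bad paths from $(x_1,y_1)$ to $(x_2,y_2)$ are in bijection with all monotone paths from $(y_1+1,x_1-1)$ to $(x_2,y_2)$; the latter have $x_2+y_2-x_1-y_1$ steps of which $y_2-x_1+1$ are north, giving $\binom{x_2+y_2-x_1-y_1}{y_2-x_1+1}$ of them. Subtracting yields exactly $\nabovediag{x_1}{y_1}{x_2}{y_2}$.

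Nothing here is deep — it is the textbook reflection computation — so the only things that call for care are: (i) checking that the reflection is genuinely a bijection between the two classes of paths, i.e.\ that reflecting a bad path produces a path whose initial segment lies in $\{b-a\le -1\}$ with its tail unchanged, and that reflecting back is the inverse, so that ``first point on $y=x-1$'' is well-defined and unchanged on both sides; and (ii) the index bookkeeping for the two binomial coefficients, together with the degenerate endpoint cases flagged in the first paragraph. I expect (i) to be the slightly delicate point to phrase cleanly, while (ii) is routine.
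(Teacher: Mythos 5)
Your reflection argument is correct and is the standard proof of this classical result, which the paper itself does not prove but simply quotes from Krattenthaler (Theorem~10.3.1 of the cited chapter), where it is established by exactly this Andr\'e reflection over the line $y=x-1$. Your observation that the stated formula implicitly requires both endpoints to lie weakly above the diagonal (as they always do for vertices of $\Gamma_{A_n}$) is a fair and accurate caveat, and the index bookkeeping for both binomial coefficients checks out.
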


We set the following notation for space saving purposes:

\begin{equation}
	    \weakabove{(x_1, y_1)}{(x_2, y_2)} \coloneqq
    \gamma((x_1, y_1) \to (x_2, y_2) \st \text{weakly above } x = y)
\label{form_gamma_above}
\end{equation}

We can associate the digraph $\Gamma_{A_n}$ to a subdigraph of the $\Z^2$-lattice digraph by placing $I = (0,1)$ and letting all other edges line up accordingly.
Recall that for a root $\alpha_i$ we let $v_i^{tr}$ be the vertex in the top right of its associated box and $v_i^{bl}$ be the vertex of the bottom left.
Putting these together gives us the following theorem.
\begin{theorem}
    \label{thm:type-A-count}
    Let $W$ be a type $A$ Weyl group and, for $w \in W$, let $N(w^{-1}) = \left\{ \alpha_{i_1j_1}, \ldots, \alpha_{i_kj_k} \right\}$ be the inversion set of its inverse.
    Then the number of regions in the Weyl cone $C_w$ in the Shi arrangement $\shiarr$ is given by:
    \[
        \order{C_w} = \det\begin{pmatrix}
            1 & \weakabove{v^{tr}_2}{v^{bl}_1} & \cdots & \weakabove{v^{tr}_k}{v^{bl}_1} & \weakabove{I}{v^{bl}_1}\\
            \weakabove{v^{tr}_1}{v^{bl}_2} & 1 & \cdots & \weakabove{v^{tr}_k}{v^{bl}_2} & \weakabove{I}{v^{bl}_2}\\
            \vdots & \vdots & \ddots & \vdots & \vdots\\
            \weakabove{v^{tr}_1}{v^{bl}_k} & \weakabove{v^{tr}_2}{v^{bl}_k} & \cdots & 1 & \weakabove{I}{v^{bl}_k}\\
            \weakabove{v^{tr}_1}{F} & \weakabove{v^{tr}_2}{F} & \cdots & \weakabove{v^{tr}_k}{F} & \weakabove{I}{F}
        \end{pmatrix}, 
          \] 
    where $I = (0,1)$, $F=(n,n+1)$,       
    $v^{tr}_\ell = (i_\ell, j_\ell+1)$ and $v^{bl}_\ell = (i_\ell - 1, j_\ell)$
    and $\gamma$ is the formula in \autoref{form_gamma_above}. 
    
%

    
\end{theorem}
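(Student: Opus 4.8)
The plan is to assemble this as a direct corollary of the two main results already in hand: \autoref{thm:nonoverlap-bij-antichains} (which turns regions of $C_w$ into paths in $\Gamma_{A_n}$ avoiding the corners indexed by $N(w^{-1})$), \autoref{thm:path-nonoverlapping} (the determinantal inclusion–exclusion for non-overlapping paths), and \autoref{thm:count-above-diag} (Krattenthaler's closed form for lattice paths staying weakly above the diagonal). The only real content is bookkeeping: checking that the generic matrix $M_\Pi$ of \autoref{thm:path-nonoverlapping} specializes, under the chosen embedding of $\Gamma_{A_n}$ into $\Z^2$, to the matrix of $\gamma$-entries displayed in the statement.

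First I would invoke \autoref{thm:num-in-cone} and \autoref{thm:AB-bij} (equivalently \autoref{thm:nonoverlap-bij-antichains} specialized to type $A$) to identify $\order{C_w}$ with the number of paths from $I$ to $F$ in $\Gamma_{A_n}$ that contain no corner $\pi_\ell$ associated to a root $\alpha_{i_\ell j_\ell} \in N(w^{-1})$. Since each corner is a length-two subpath and any two distinct corners share at most a single vertex but never a directed edge, the collection $\Pi = \{\pi_1,\dots,\pi_k\}$ is non-overlapping, so \autoref{thm:path-nonoverlapping} applies and gives $\order{C_w} = \det(M_\Pi)$ with entries $\gamma(F_j \to I_i)$, $\gamma(I \to I_i)$, $\gamma(F_j \to F)$ and diagonal $1$'s, where $I_\ell = I(\pi_\ell) = v_\ell^{bl}$ and $F_\ell = F(\pi_\ell) = v_\ell^{tr}$ by the convention fixed in \autoref{ssec:boxes_and_corners}.

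Next I would pin down the coordinates. Under the embedding $I = (0,1)$, the box carrying root $\alpha_{ij}$ (the $i$th box in the $j$th row of the staircase $\Lambda_{A_n}$) sits with bottom-left vertex $(i-1,j)$ and top-right vertex $(i,j+1)$; a short induction on rows and columns of the staircase confirms this, as does the check $F = (n,n+1)$ for the single box $\alpha_{nn}$. Hence $v_\ell^{bl} = (i_\ell-1, j_\ell)$ and $v_\ell^{tr} = (i_\ell, j_\ell+1)$, as claimed. Because every vertex of $\Gamma_{A_n}$ under this embedding lies weakly above the line $x = y$, and because $\Gamma_{A_n}$ is exactly the portion of the north/east $\Z^2$-lattice digraph lying weakly above that line, each entry $\gamma(A \to B)$ counting paths in $\Gamma_{A_n}$ coincides with the weakly-above-diagonal count in the full lattice, i.e.\ with $\weakabove{A}{B}$ of \autoref{form_gamma_above}; \autoref{thm:count-above-diag} then supplies the binomial formula. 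Substituting these into $M_\Pi$ yields precisely the displayed matrix, completing the proof.

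The step I expect to require the most care is the coordinate identification together with the claim that path counts in the bounded region $\Gamma_{A_n}$ equal the weakly-above-diagonal counts in the ambient lattice: one must check that no lattice path from $v_\ell^{bl}$ (or from $I$) to another staircase vertex that stays weakly above $x=y$ can wander outside $\Lambda_{A_n}$, which holds because the staircase is exactly the set of unit boxes whose lower-left corner $(a,b)$ satisfies $a < b \le n$ — i.e.\ the region is "downward and rightward closed" toward the diagonal in exactly the way that makes the two counts agree. Everything else is a mechanical substitution into \autoref{thm:path-nonoverlapping}.
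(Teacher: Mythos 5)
Your proposal is correct and follows essentially the same route as the paper's proof: reduce to counting corner-avoiding paths via \autoref{thm:num-in-cone} and \autoref{thm:nonoverlap-bij-antichains}, observe that corners form a non-overlapping collection, apply \autoref{thm:path-nonoverlapping}, and identify the entries with weakly-above-diagonal lattice path counts under the embedding $I=(0,1)$. The extra details you supply (why corners never overlap, why counts in the staircase agree with counts in the ambient lattice) are correct elaborations of steps the paper treats as immediate.
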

\begin{proof}
    By \autoref{thm:num-in-cone} and \autoref{thm:nonoverlap-bij-antichains}, the number of regions in $C_w$ is precisely the number of paths from $I$ to $F$ which don't contain corners associated to the roots in $N(w^{-1})$ as subpaths.
    Letting $\Pi$ be the corners associated to roots in $N(w^{-1})$, then $\Pi$ is a collection of non-overlapping paths.
    By \autoref{thm:path-nonoverlapping} this implies that $\order{C_w}$ is equal to a determinant.
    By our choice of $I = (0,1)$, then we have $F = (n, n+1)$ and for each $\alpha_{i_\ell j_\ell}$
     we have $v^{tr}_\ell = (i_\ell, j_\ell+1)$ and $v^{bl}_\ell = (i_\ell - 1, j_\ell)$.
    In other words, all of our paths are precisely paths weakly above the main diagonal, giving
    \begin{align*}
        \gamma(F_i \to I_j) &= \weakabove{v^{tr}_i}{v^{bl}_j},\\
        \gamma(I \to I_j) &= \weakabove{I}{v^{bl}_j},\\
        \gamma(F_i \to F) &= \weakabove{v^{tr}_i}{F},\text{~and}\\
        \gamma(I \to F) &= \weakabove{I}{F}
    \end{align*}
    as desired.
\end{proof}

\begin{example}
	\label{example1}
    If $n = 2$ then we have the following Shi arrangement of type $A_2$ where the thickened hyperplanes are the hyperplanes of the underlying Weyl arrangement.
    Using the details in \autoref{sssec:types_A_and_B}, the digraph is the digraph $\Gamma_{A_2}$ associated to the root poset.
    \begin{center}
\begin{tikzpicture}[scale=0.6]
    \begin{scope}[shift={(0,0)}]
		\clip (0,0) circle (4.4cm);
		\draw[domain=-3:4,variable=\x,line width=1.2,black] plot ({\x},0);
		\draw[domain=-3:4,variable=\x,line width=1.2,black] 
		plot({cos(60)*\x},{\x});
		\draw[domain=-3:4,variable=\x,line width=1.2,black]plot({cos(120)*\x},{\x});
		\draw[domain=-3:4,variable=\x,line width=0.8,gray]plot({cos(120)*\x+1},{\x});
		\draw[domain=-3:4,variable=\x,line width=0.8,gray]plot({cos(60)*\x+1},{\x});
		\draw[domain=-3:4,variable=\x,line width=0.8,gray]plot({\x},{1});
		
		\node at (3,-0.27){\scriptsize\color{black} $x_1-x_2\!=\!0$};
		\node at (1.2,3){\rotatebox{64}{\scriptsize\color{black} $x_2-x_3\!=\!0$}};
		\node at (-1.2,3){\rotatebox{-64}{\scriptsize\color{black} $x_1-x_3=\!0\!$}};
		
		\node at (3,0.85){\scriptsize\color{gray} $x_1-x_2\!=\!1$};	
		\node at (2.35,2.3){\rotatebox{64}{\scriptsize\color{gray} $x_2-x_3\!=\!1$}};
		\node at (-0.3,3){\rotatebox{-64}{\scriptsize\color{gray} $x_1-x_3=\!1\!$}};
			\fill[ablue,opacity=0.2] (0,0)--(117:5.5)--(180:5.5)--cycle;
            \node at (-2.16506350946109, 1.5) {$C_w$};
	\end{scope}
    \begin{scope}[shift={(6.5,0)}, scale=1.4,thick,decoration={markings,mark=at position 0.9 with {\arrow{>}}}] 
        \draw[postaction={decorate}] (0,0) -- (0,1);
        \draw[postaction={decorate}] (0,0) -- (1,0);
        \draw[postaction={decorate}] (0,1) -- (1,1);
        \draw[postaction={decorate}] (0,1) -- (0,2);
        \draw[postaction={decorate}] (0,2) -- (1,2);
        \draw[postaction={decorate}] (1,0) -- (1,1);
        \draw[postaction={decorate}] (1,1) -- (1,2);
        \draw[postaction={decorate}] (1,1) -- (2,1);
        \draw[postaction={decorate}] (1,2) -- (2,2);
        \draw[postaction={decorate}] (2,1) -- (2,2);
        \node at (0.5,0.5) {$\alpha_{11}$};
        \node at (0.5,1.5) {$\alpha_{12}$};
        \node at (1.5,1.5) {$\alpha_{22}$};
        \draw[fill=Black] (0, 0) circle[radius=2pt, fill=Black] node[below left] {$(0,1)$};
        \draw[fill=Black] (2, 2) circle[radius=2pt, fill=Black] node[above right] {$(2,3)$};
	\end{scope}
\end{tikzpicture}
    \end{center}
    Let $w = s_1s_2$.
    Then $N(w^{-1}) = \set{\alpha \in \Phi^+}{\ell(t_\alpha s_2s_1) < \ell(s_2s_1) = 2} = \left\{ \alpha_{22}, \alpha_{12} \right\}$.
    Therefore, the number of regions is equal to the number of paths from $(0,1)$ to $(2,3)$ which do not have the elements of $N(w^{-1})$ as corners.
    This means, we want to avoid the paths: $\pi_{\alpha_{12}}: (0,2) \to (1,2) \to (1,3)$ and $\pi_{\alpha_{22}}: (1,2) \to (2,2) \to (2,3)$.
    By \autoref{thm:type-A-count}
    \begin{align*}
        \order{C_w} &= \det
        {\renewcommand{\arraystretch}{1.2}
        \begin{pmatrix}
        1 & \weakabove{(2,3)}{(0,2)} & \weakabove{(0,1)}{(0,2)}\\
            \weakabove{(1,3)}{(1,2)} & 1 & \weakabove{(0,1)}{(1,2)}\\
            \weakabove{(1,3)}{(2,3)} & \weakabove{(2,3)}{(2,3)} & \weakabove{(0, 1)}{(2, 3)}\\
        \end{pmatrix}
        }
        \\
        &= \det
        {\renewcommand{\arraystretch}{1.2}
        \begin{pmatrix}
            1 & 0 & \nabovediag{0}{1}{0}{2}\\
            0 & 1 & \nabovediag{0}{1}{1}{2}\\
            \nabovediag{1}{3}{2}{3} & \nabovediag{2}{3}{2}{3} & \nabovediag{0}{1}{2}{3}\\
        \end{pmatrix}
        }
        \\
        &= 
        \det
        \begin{pmatrix}
            1 & 0 & 1\\
            0 & 1 & 2\\
            1 & 1 & 5\\
        \end{pmatrix}\\
        &= 2
    \end{align*}
\end{example}

\begin{example}
    \label{ex:A5}
    As a slightly larger example, let $n = 5$ and suppose that $w = s_5 s_2 s_4 s_3 s_1$.
    Then our paths run from $I = (0,1)$ to $F = (5,6)$ allowing only north and east steps, which are weakly above the $y = x$ diagonal.

    Our inversion set is given by: 
    \[
        N(w^{-1}) = \left\{ \alpha_{11},\, \alpha_{33},\, \alpha_{34},\, \alpha_{13},\, \alpha_{35} \right\}
    \]
    The corners to avoid are given by:
    \begin{align*}
        \alpha_{11} = \alpha_1 &:\;(0,1) \to (1,1) \to (1,2)\\
        \alpha_{33} = \alpha_3 &:\; (2,3) \to (3,3) \to (3,4)\\
        \alpha_{34} = \alpha_3 + \alpha_4 &:\; (2,4) \to (3,4) \to (3,5)\\
        \alpha_{13} = \alpha_1 + \alpha_2 + \alpha_3 &:\; (0,3) \to (1,3) \to (1,4)\\
        \alpha_{35} = \alpha_3 + \alpha_4 + \alpha_5 &:\; (2,5) \to (3,5) \to (3,6)\\
    \end{align*}

    Then, the number of regions is given by:
    {\tiny
    \[
        \det{\medmuskip = 1mu%
            \setlength{\arraycolsep}{1pt}
            \renewcommand{\arraystretch}{2.5}
            \begin{pmatrix}
            1 & \weakabove{(3,4)}{(0,1)} & \weakabove{(3,5)}{(0,1)} & \weakabove{(1,4)}{(0,1)} & \weakabove{(3,6)}{(0,1)} & \weakabove{(0,1)}{(0,1)}\\
            \weakabove{(1,2)}{(2,3)} & 1 & \weakabove{(3,5)}{(2,3)} & \weakabove{(1,4)}{(2,3)} & \weakabove{(3,6)}{(2,3)} & \weakabove{(0,1)}{(2,3)}\\
            \weakabove{(1,2)}{(2,4)} & \weakabove{(3,4)}{(2,4)} & 1 & \weakabove{(1,4)}{(2,4)} & \weakabove{(3,6)}{(2,4)} & \weakabove{(0,1)}{(2,4)}\\
            \weakabove{(1,2)}{(0,3)} & \weakabove{(3,4)}{(0,3)} & \weakabove{(3,5)}{(0,3)} & 1 & \weakabove{(3,6)}{(0,3)} & \weakabove{(0,1)}{(0,3)}\\
            \weakabove{(1,2)}{(2,5)} & \weakabove{(3,4)}{(2,5)} & \weakabove{(3,5)}{(2,5)} & \weakabove{(1,4)}{(2,5)} & 1 & \weakabove{(0,0)}{(2,5)}\\
            \weakabove{(1,2)}{(5,6)} & \weakabove{(3,4)}{(5,6)} & \weakabove{(3,5)}{(5,6)} & \weakabove{(1,4)}{(5,6)} & \weakabove{(3,6)}{(5,6)} & \weakabove{(0,1)}{(5,6)}\\
        \end{pmatrix}}\\
    \]
    }
    \begin{align*}
        &= \renewcommand{\arraystretch}{1.4}\det
        \begin{pmatrix}
            1 & 0 & 0 & 0 & 0 & \binom{0}{0} - \binom{0}{2}\\
            \binom{2}{1} - \binom{2}{3} & 1 & 0 & 0 & 0 & \binom{4}{2} - \binom{4}{4}\\
            \binom{3}{2} - \binom{3}{4} & 0 & 1 & \binom{1}{0} - \binom{1}{4} & 0 & \binom{5}{3} - \binom{5}{5}\\
            0 & 0 & 0 & 1 & 0 & \binom{2}{2} - \binom{2}{4}\\
            \binom{4}{3} - \binom{4}{5} & 0 & 0 & \binom{2}{1} - \binom{2}{5} & 1 & \binom{6}{4} - \binom{6}{6}\\
            \binom{8}{4} - \binom{8}{6} & \binom{4}{2} - \binom{4}{4} & \binom{3}{1} - \binom{3}{4} & \binom{6}{2} - \binom{6}{6} & \binom{2}{0} - \binom{2}{4} & \binom{9}{4} - \binom{9}{7}\\
        \end{pmatrix}\\
        &= \renewcommand{\arraystretch}{1}\det
        \begin{pmatrix}
            1 & 0 & 0 & 0 & 0 & 1\\
            2 & 1 & 0 & 0 & 0 & 5\\
            3 & 0 & 1 & 1 & 0 & 9\\
            0 & 0 & 0 & 1 & 0 & 1\\
            4 & 0 & 0 & 2 & 1 & 14\\
            42 & 5 & 3 & 14 & 1 & 132
        \end{pmatrix}&\\
        &= 38
    \end{align*}
\end{example}

There are many $0$ entries in this matrix which makes computations much faster than would be assumed by just looking at \autoref{thm:count-above-diag}.

\subsection{Type \texorpdfstring{$B$}{B}}
\label{ssec:type-B-counting}
\label{type-B-counting}

The type $B$ case is a little more complex, but we can still make use of \autoref{thm:count-above-diag}.
To keep all vertices above the main diagonal we let $I = (0,1)$ as in the type $A$ case.
Furthermore, we note that, in view of the way we have directed the edges in the graph $\Gamma_B$, the 
counting for the
 entries  $\gamma\left( X \to Y \right)$ 
 of the corresponding determinant   is identical with those 
  in type $A$  unless $Y$ is the final point $F$. 

Indeed,  writing down the determinant $M_{\Pi}$ (see also \autoref{elements_aij}) in the special case of the graph $\Gamma_B$, one can see that  
$Y$ is  either an initial point of a {\em corner} of the diagram $\Lambda_{B_n}$  or $Y=F$. 
Since the possible initial points of corners are all the lattice points of the diagram $\Gamma_B$ except those on the lines $x-y=0, 2n, 2n+1$ (see \autoref{fig:typeBcorners}), 
we deduce that when $Y \neq F$  the entry  $\gamma \left( X \to Y \right)$ 
 is precisely the number of lattice paths from $X$ to $Y$  weakly above the $x = y$ diagonal, as in type $A$.
Therefore, the only time we get something different is when $Y = F = (1, 2n)$, which
is the case  only for  the entries of the bottom row of the matrix $M_{\Pi}$.

We handle the point $F$ in the following way.
Let us denote by $\delta$   the line $x-y=2n$ and let 
$F^{D_i}$ for  $i = 0,\ldots,n$ 
 be the 
lattice points of the graph $\Gamma_B$ on $\delta$ from top to bottom, i.e.,  $F^{D_i}$ is  the point $ (i, 2n - i)$ (see Figure \autoref{fig:typeBcorners} right). 
If $X = (a, b)$ is a lattice point of  $\Gamma_B$
on $x-y = 2n$ or $2n+1$, then  there is a unique path from $X$ to $F$. If  $X=(a,b)$ is a point strictly below 
$\delta$ then, in order to compute $\gamma(X \to F)$, 
 we  need to count all paths from $X$ to the  points 
 $F^{D_0}, \ldots, F^{D_n}$ of $\delta $, 
 since each $F^{D_i}$ subsequently leads to $F$ in a unique way. 
 Therefore we have

\begin{equation}
    \label{eq:gamma_Dsum}
    \gamma\left( (a,b) \to F \right) = \Fdiag{(a,b)}{F} = \begin{cases}
        \sum_{i = 1}^{n} \weakabove{(a,b)}{F^{D_i}} & \text{if } b\neq 2n - a + 1\\
    1 & \text{if }b = 2n - a +1
    \end{cases}
\end{equation}
where $D\Sigma$ stands for ``diagonal sum''.

\begin{figure}[h]
		\begin{tikzpicture}
                [thick,decoration={markings,mark=at position 0.9 with {\arrow{>}}}]
					\begin{scope}[shift={(0,0)}, xscale=1]
				\draw[postaction={decorate}] (0,0) -- (0,1);
				\draw[postaction={decorate}] (0,1) -- (0,2);
				\draw[postaction={decorate}] (0,2) -- (0,3);
				\draw[postaction={decorate}] (0,3) -- (0,4);
				\draw[postaction={decorate}] (0,4) -- (0,5);
				\draw[postaction={decorate}] (0,5) -- (0,6);
				\draw[postaction={decorate}] (0,6) -- (0,7);
				\draw[postaction={decorate}] (0,0) -- (1,0);
				\draw[postaction={decorate}] (0,1) -- (1,1);
				\draw[postaction={decorate}] (0,2) -- (1,2);
				\draw[postaction={decorate}] (0,3) -- (1,3);
				\draw[postaction={decorate}] (0,4) -- (1,4);
				\draw[postaction={decorate}] (0,5) -- (1,5);
				\draw[postaction={decorate}] (0,6) -- (1,6);
				\draw[postaction={decorate}] (0,7) -- (1,7);
				\draw[postaction={decorate}] (1,0) -- (1,1);
				\draw[postaction={decorate}] (1,1) -- (1,2);
				\draw[postaction={decorate}] (1,2) -- (1,3);
				\draw[postaction={decorate}] (1,3) -- (1,4);
				\draw[postaction={decorate}] (1,4) -- (1,5);
				\draw[postaction={decorate}] (1,5) -- (1,6);
				\draw[postaction={decorate}] (1,6) -- (1,7);
				\draw[postaction={decorate}] (1,1) -- (2,1);
				\draw[postaction={decorate}] (1,2) -- (2,2);
				\draw[postaction={decorate}] (1,3) -- (2,3);
				\draw[postaction={decorate}] (1,4) -- (2,4);
				\draw[postaction={decorate}] (1,5) -- (2,5);
				\draw[postaction={decorate}] (2,6) -- (1,6);
				\draw[postaction={decorate}] (2,1) -- (2,2);
				\draw[postaction={decorate}] (2,2) -- (2,3);
				\draw[postaction={decorate}] (2,3) -- (2,4);
				\draw[postaction={decorate}] (2,4) -- (2,5);
				\draw[postaction={decorate}] (2,5) -- (2,6);
				\draw[postaction={decorate}] (2,2) -- (3,2);
				\draw[postaction={decorate}] (2,3) -- (3,3);
				\draw[postaction={decorate}] (2,4) -- (3,4);
				\draw[postaction={decorate}] (3,5) -- (2,5);
				\draw[postaction={decorate}] (3,2) -- (3,3);
				\draw[postaction={decorate}] (3,3) -- (3,4);
				\draw[postaction={decorate}] (3,4) -- (3,5);
				\draw[postaction={decorate}] (3,3) -- (4,3);
				\draw[postaction={decorate}] (4,4) -- (3,4);
				\draw[postaction={decorate}] (4,3) -- (4,4);
				\node at (0.5, 0.5) {$\alpha_{11}$};
				\node at (0.5, 1.5) {$\alpha_{12}$};
				\node at (0.5, 2.5) {$\alpha_{13}$};
				\node at (0.5, 3.5) {$\alpha_{14}$};
				\node at (1.5, 1.5) {$\alpha_{22}$};
				\node at (1.5, 2.5) {$\alpha_{23}$};
				\node at (1.5, 3.5) {$\alpha_{24}$};
				\node at (2.5, 2.5) {$\alpha_{33}$};
				\node at (2.5, 3.5) {$\alpha_{34}$};
				\node at (3.5, 3.5) {$\alpha_{44}$};
				\node at (0.5, 4.5) {$\alpha_{14,4}$};
				\node at (0.5, 5.5) {$\alpha_{14,34}$};
				\node at (0.5, 6.5) {$\alpha_{14,24}$};
				\node at (1.5, 4.5) {$\alpha_{24,4}$};
				\node at (1.5, 5.5) {$\alpha_{24,34}$};
				\node at (2.5, 4.5) {$\alpha_{34,4}$};
					\draw[fill=Black] (0,0) circle[radius=2pt, fill=Black] node[below left] {$I$};
			\draw[fill=Black] (1,7) circle[radius=2pt, fill=Black] node[above right] {$F$};
			\draw[gray] (0,-1)--(4.1,3.1);
			\draw[gray] (5.5,2.5)--(1,7);
			\draw[gray] (5,2)--(0,7);
			\node at (2.4,1){ \rotatebox{45}{\color{gray} $x-y=0$}};
			\node at (4.7,2){ \rotatebox{-45}{\color{gray} $x-y=2n$}};
			\node at (5.7,2){ \rotatebox{-45}{\color{gray} $x-y=2n+1$}};
			\end{scope}
	\begin{scope}[shift={(8,0)}, xscale=1]
	\draw[postaction={decorate}] (0,0) -- (0,1);
	\draw[postaction={decorate}] (0,1) -- (0,2);
	\draw[postaction={decorate}] (0,2) -- (0,3);
	\draw[postaction={decorate}] (0,3) -- (0,4);
	\draw[postaction={decorate}] (0,4) -- (0,5);
	\draw[postaction={decorate}] (0,5) -- (0,6);
	\draw[postaction={decorate}] (0,6) -- (0,7);
	\draw[postaction={decorate}] (0,0) -- (1,0);
	\draw[postaction={decorate}] (0,1) -- (1,1);
	\draw[postaction={decorate}] (0,2) -- (1,2);
	\draw[postaction={decorate}] (0,3) -- (1,3);
	\draw[postaction={decorate}] (0,4) -- (1,4);
	\draw[postaction={decorate}] (0,5) -- (1,5);
	\draw[postaction={decorate}] (0,6) -- (1,6);
	\draw[postaction={decorate}] (0,7) -- (1,7);
	\draw[postaction={decorate}] (1,0) -- (1,1);
	\draw[postaction={decorate}] (1,1) -- (1,2);
	\draw[postaction={decorate}] (1,2) -- (1,3);
	\draw[postaction={decorate}] (1,3) -- (1,4);
	\draw[postaction={decorate}] (1,4) -- (1,5);
	\draw[postaction={decorate}] (1,5) -- (1,6);
	\draw[postaction={decorate}] (1,6) -- (1,7);
	\draw[postaction={decorate}] (1,1) -- (2,1);
	\draw[postaction={decorate}] (1,2) -- (2,2);
	\draw[postaction={decorate}] (1,3) -- (2,3);
	\draw[postaction={decorate}] (1,4) -- (2,4);
	\draw[postaction={decorate}] (1,5) -- (2,5);
	\draw[postaction={decorate}] (2,6) -- (1,6);
	\draw[postaction={decorate}] (2,1) -- (2,2);
	\draw[postaction={decorate}] (2,2) -- (2,3);
	\draw[postaction={decorate}] (2,3) -- (2,4);
	\draw[postaction={decorate}] (2,4) -- (2,5);
	\draw[postaction={decorate}] (2,5) -- (2,6);
	\draw[postaction={decorate}] (2,2) -- (3,2);
	\draw[postaction={decorate}] (2,3) -- (3,3);
	\draw[postaction={decorate}] (2,4) -- (3,4);
	\draw[postaction={decorate}] (3,5) -- (2,5);
	\draw[postaction={decorate}] (3,2) -- (3,3);
	\draw[postaction={decorate}] (3,3) -- (3,4);
	\draw[postaction={decorate}] (3,4) -- (3,5);
	\draw[postaction={decorate}] (3,3) -- (4,3);
	\draw[postaction={decorate}] (4,4) -- (3,4);
	\draw[postaction={decorate}] (4,3) -- (4,4);
	\node at (0.5, 0.5) {$\alpha_{11}$};
	\node at (0.5, 1.5) {$\alpha_{12}$};
	\node at (0.5, 2.5) {$\alpha_{13}$};
	\node at (0.5, 3.5) {$\alpha_{14}$};
	\node at (1.5, 1.5) {$\alpha_{22}$};
	\node at (1.5, 2.5) {$\alpha_{23}$};
	\node at (1.5, 3.5) {$\alpha_{24}$};
	\node at (2.5, 2.5) {$\alpha_{33}$};
	\node at (2.5, 3.5) {$\alpha_{34}$};
	\node at (3.5, 3.5) {$\alpha_{44}$};
	\node at (0.5, 4.5) {$\alpha_{14,4}$};
	\node at (0.5, 5.5) {$\alpha_{14,34}$};
	\node at (0.5, 6.5) {$\alpha_{14,24}$};
	\node at (1.5, 4.5) {$\alpha_{24,4}$};
	\node at (1.5, 5.5) {$\alpha_{24,34}$};
	\node at (2.5, 4.5) {$\alpha_{34,4}$};
	\draw[fill=Black] (0,0) circle[radius=2pt, fill=Black] node[below left] {$I$};
	\draw[fill=Black] (1,7) circle[radius=2pt, fill=Black] node[above right] {$F$};
		\draw[fill=gray] (0,7) circle[radius=2pt,gray] node[above right] {\color{gray} $F^{D_0}$};
	\draw[fill=gray] (1,6) circle[radius=2pt,gray] node[above right] {\color{gray} $F^{D_1}$};
	\draw[fill=gray] (2,5) circle[radius=2pt,gray] node[above right] {\color{gray} $F^{D_2}$};
	\draw[fill=gray] (3,4) circle[radius=2pt,gray] node[above right] {\color{gray} $F^{D_3}$};
	\draw[fill=gray] (4,3) circle[radius=2pt,gray] node[above right] {\color{gray} $F^{D_4}$};
\end{scope}
		\end{tikzpicture}
	
\caption{}
\label{fig:typeBcorners}
\end{figure}

Putting these results together, we have the following type $B$ result.
\begin{theorem}
    \label{thm:type-B-count}
    Let $W$ be a type $B$ Weyl group and, for $w \in W$, let $N(w^{-1}) = 
    \left\{ \alpha_{1}, \ldots, \alpha_{k} \right\}$
    be its inversion set  where $\alpha_i$ are roots in $\Phi^+$.
    Then the number of regions in the Weyl cone $C_w$ in the Shi arrangement $\shiarr$ is given by:
    \[
        \order{C_w} = \det\begin{pmatrix}
            1 & \weakabove{v^{tr}_2}{v^{bl}_1} & \cdots & \weakabove{v^{tr}_k}{v^{bl}_1} & \weakabove{I}{v^{bl}_1}\\
            \weakabove{v^{tr}_1}{v^{bl}_2} & 1 & \cdots & \weakabove{v^{tr}_k}{v^{bl}_2} & \weakabove{I}{v^{bl}_2}\\
            \vdots & \vdots & \ddots & \vdots & \vdots\\
            \weakabove{v^{tr}_1}{v^{bl}_k} & \weakabove{v^{tr}_2}{v^{bl}_k} & \cdots & 1 & \weakabove{I}{v^{bl}_k}\\
            \Fdiag{v^{tr}_1}{F} & \Fdiag{v^{tr}_2}{F} & \cdots & \Fdiag{v^{tr}_k}{F} & \Fdiag{I}{F}\\
        \end{pmatrix}
    \]
    where $I = (0,1)$, $F=(1,2n)$,
    $v^{tr}_\ell = (i_\ell, j_\ell+1)$ and $v^{bl}_\ell = (i_\ell - 1, j_\ell)$
    and $\gamma$ is the formula in \autoref{form_gamma_above} and \autoref{eq:gamma_Dsum}. 
\end{theorem}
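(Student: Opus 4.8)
The plan is to follow the template of the proof of \autoref{thm:type-A-count}; the only genuinely new point is the treatment of paths that terminate at the sink $F$. First I would invoke \autoref{thm:num-in-cone} and \autoref{thm:nonoverlap-bij-antichains} to rewrite $\order{C_w}$ as the number of directed paths in $\Gamma_{B_n}$ from $I$ to $F$ that contain none of the corners $\pi_1,\dots,\pi_k$ attached to the roots $\alpha_1,\dots,\alpha_k$ of $N(w^{-1})$. Each $\pi_\ell$ is the length-two subpath $v^{bl}\to v^{br}\to v^{tr}$ of a single box of $\Lambda_{B_n}$, and two distinct such subpaths are never nested and never share a terminal or initial edge, so $\Pi=\{\pi_1,\dots,\pi_k\}$ is non-overlapping and \autoref{thm:path-nonoverlapping} applies, giving $\order{C_w}=\det(M_\Pi)$. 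It then remains to evaluate the entries $\gamma(F_i\to I_j)$, $\gamma(I\to I_j)$, $\gamma(F_i\to F)$, $\gamma(I\to F)$ of $M_\Pi$.

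Next I would fix the embedding of $\Gamma_{B_n}$ into $\Z^2$ with $I=(0,1)$, exactly as in type $A$, so that every vertex of $\Gamma_{B_n}$ lies weakly above the line $x=y$, the sink is $F=(1,2n)$, and the corner of the box sitting in column $i_\ell$ and row $j_\ell$ of $\Lambda_{B_n}$ --- whether this box belongs to the lower staircase or to the upper (decreasing) staircase --- has $I_\ell=v^{bl}_\ell=(i_\ell-1,j_\ell)$ and $F_\ell=v^{tr}_\ell=(i_\ell,j_\ell+1)$. A short case check over the two box families shows that every $v^{bl}_\ell$ lies strictly below the line $\delta$. The key observation is then that the only edges of $\Gamma_{B_n}$ not oriented north or east are the westward edges along the upper boundary of the skew shape, and that from any vertex incident to such an edge every directed continuation remains on that upper boundary and ends at $F$; consequently no directed path can reach a vertex strictly below $\delta$ once it has taken a westward step. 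Hence any directed path ending at some $v^{bl}_j$ uses only north and east steps, and the induced subgraph of $\Gamma_{B_n}$ on the vertices weakly below $\delta$ agrees, for path-counting purposes, with the north-east $\Z^2$ lattice restricted to the halfplane $y\ge x$. By \autoref{thm:count-above-diag} the entries of $M_\Pi$ whose target is not $F$ therefore equal $\weakabove{X}{Y}$, word for word as in type $A$; this is why $M_\Pi$ differs from the type-$A$ matrix only in its bottom row.

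For the bottom row, where the target is $F$, I would use the diagonal-sum decomposition recorded in \autoref{eq:gamma_Dsum}. Each vertex of $\Gamma_{B_n}$ on $\delta$ --- that is, each point $F^{D_i}=(i,2n-i)$ --- is joined to $F$ by a unique directed path running along the westward upper boundary, and likewise each vertex on the line $x+y=2n+1$ is joined to $F$ in exactly one way, which accounts for the value $1$ in the case $b=2n-a+1$. For a point $X$ strictly below $\delta$, a directed path from $X$ to $F$ is monotone in $x+y$ until it first meets $\delta$, so it is a genuine north-east lattice path (weakly above $y=x$, hence entirely inside $\Gamma_{B_n}$) from $X$ to some $F^{D_i}$, followed by the forced tail to $F$; since such a lattice path meets $\delta$ in a single vertex, summing over the $F^{D_i}$ yields $\gamma(X\to F)=\Fdiag{X}{F}$. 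Substituting all of these evaluations into $\det(M_\Pi)$ then produces the asserted matrix.

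The step I expect to be the main obstacle is this last one: one must check carefully that the chosen orientation of the upper boundary of $\Gamma_{B_n}$ really does force every directed path into $F$ to split uniquely as ``a weakly-above-diagonal north-east path up to $\delta$, then a rigid tail,'' and that the $F^{D_i}$ are precisely the vertices of $\Gamma_{B_n}$ lying on $\delta$, so that no path can cross $\delta$ along an edge rather than at a vertex. This is the only place where $\Gamma_{B_n}$ genuinely departs from a plain north-east lattice, and it is also where the bookkeeping behind \autoref{thm:path-nonoverlapping} could fail if the geometry were different; the accompanying case analysis of the positions of $v^{bl}_\ell$ and $v^{tr}_\ell$ relative to $\delta$ (needed to guarantee that all other entries are honest type-$A$ counts) is routine but should not be skipped.
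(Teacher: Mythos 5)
Your proposal is correct and follows essentially the same route as the paper: reduce to counting corner-avoiding paths via \autoref{thm:num-in-cone} and \autoref{thm:nonoverlap-bij-antichains}, apply \autoref{thm:path-nonoverlapping} to the non-overlapping collection of corners, identify all entries with target $\neq F$ as type-$A$ lattice-path counts weakly above $x=y$, and evaluate the bottom row by splitting each path to $F$ at its unique crossing of the diagonal through the points $F^{D_i}$, which is exactly \autoref{eq:gamma_Dsum}. The extra care you take with the westward boundary edges and the location of the $v^{bl}_\ell$ relative to that diagonal is just an expanded version of the justification the paper gives in the discussion preceding the theorem.
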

\begin{proof}
    By \autoref{thm:num-in-cone} and \autoref{thm:nonoverlap-bij-antichains}, the number of regions in $C_w$ is precisely the number of paths from $I$ to $F$ which don't contain corners associated to the roots in $N(w^{-1})$ as subpaths.

    Letting $\Pi$ be the corners associated to roots in $N(w^{-1})$, then $\Pi$ is a collection of non-overlapping paths.
    By \autoref{thm:path-nonoverlapping} this implies $\order{C_w}$ is equal to a determinant.
    By setting $I = (0,1)$ we have $F = (1, 2n)$ and for $m \in [k]$ we have
    \begin{align*}
        \alpha_{i_m j_m} &:\; v^{tr}_m = (i_m, j_m+1)\text{~and~}v^{bl}_m = (i_m - 1, j_m),\\
        \alpha_{i_m j_m, p_m q_m} &:\; v^{tr}_m = (i_m, j_m+1 + (n-p_m))\text{~and~}v^{bl}_m = (i_m - 1, j_m + (n-p_m))
    \end{align*}
    In other words, all of our paths are precisely paths weakly above the main diagonal, \ie 
    \begin{align*}
        \gamma(F_i \to I_j) &= \weakabove{v^{tr}_i}{v^{bl}_j},\\
        \gamma(I \to I_j) &= \weakabove{I}{v^{bl}_j},\\
        \gamma(F_i \to F) &= \Fdiag{v^{tr}_i}{F},\text{~and}\\
        \gamma(I \to F) &= \Fdiag{I}{F}
    \end{align*}
    as desired.
\end{proof}
\begin{remark}
    We can simplify $\Fdiag{X}{F}$ to a smaller summation to help speed up computations if desired whenever $X$ is not on the final south east diagonal $x-y=0$, which is always the case in our matrix.
    We describe this process in this remark and use it in the following example.
    By the equation in \autoref{thm:count-above-diag}, if $X = (a, b)$ then we only have paths from $X$ to $F^{D_i} = (i, 2n-i)$ if $a \leq i$ and $b \leq 2n - i$.
    Then we have the following set of equalities.
    \begin{align*}
        \Fdiag{X}{F} &=  \sum_{i = a}^{\min(n, 2n-b)} \weakabove{(a,b)}{F^{D_i}}\\
        &=  \sum_{i = a}^{\min(n, 2n-b)} \bns{(2n-i) + i - a - b}{2n-i-b}{(2n-i)+i-a-b}{2n-i-a+1}\\
    \end{align*}
    Let $j = 2n-i+i - a - b=2n-a-b$ and $m = \min(n, 2n-b)$.
    Furthermore, as $b \geq a$ then we can let $b = a + c$ for $c \in \N$.
    Then
    \begin{align*}
        \Fdiag{X}{F} &=  \sum_{i = a}^{m} \binom{j}{j + a - i} - \binom{j}{j + b - i + 1}\\
        &=  \sum_{i = a}^{m} \binom{j}{j + a - i} - \binom{j}{j + a + c - i + 1}\\
        &=  \sum_{i = a}^{m} \binom{j}{j + a - i} - \sum_{i=a-c-1}^{m-c-1}\binom{j}{j + a - i}\\
    \end{align*}
    If $a \leq m-c-1$ then all terms (weakly) between $a$ and $m - c-1$ get cancelled.
    In other words, we have the following final equality:
    \begin{align*}
        \Fdiag{(a, b)}{F} &=  \sum_{i = \max(a, m-c)}^{m} \binom{j}{j + a - i} - \sum_{i = a-c-1}^{\min(a, m-c)-1}\binom{j}{j+a-i}\\
    \end{align*}
    where $j = 2n-a-b$, $m = \min(n, 2n-b)$ and $c = b-a$.
\end{remark}

\begin{example}
    Unlike in type $A$, we'll start with a complex example in type $B$.
    Suppose that the type $B_4$ Weyl group $W$ has presentation such that $(s_1s_2)^3 = (s_2s_3)^3 = (s_3s_4)^4 = e$ and all other simple reflections commute.
    Then $n = 4$ and litting $w = s_2 s_3 s_4 s_1$ we have

    \[
        N(w^{-1}) = \left\{ \alpha_{11}, \alpha_{44}, \alpha_{34,44}, \alpha_{14,44} \right\}
    \]

    The corners to avoid are given by:
    \begin{align*}
        \alpha_{11} = \alpha_1 &:\; (0,1) \to (1,1) \to (1,2)\\
        \alpha_{44} = \alpha_4 &:\; (3,4) \to (4,4) \to (4,5)\\
        \alpha_{34,44} = \alpha_3 + 2\alpha_4 &:\; (2,5) \to (3,5) \to (3,6)\\
        \alpha_{14,44} = \alpha_1 + \alpha_2 + \alpha_3 + 2\alpha_4 &:\; (0,5) \to (1,5) \to (1,6)
    \end{align*}
    
    Then, the number of regions is given by:
    {
    \scriptsize
    \[
        \det{\medmuskip = 1mu%
            \setlength{\arraycolsep}{1pt}
            \renewcommand{\arraystretch}{2.5}
            \begin{pmatrix}
                1 & \weakabove{(4,5)}{(0,1)} & \weakabove{(3,6)}{(0,1)} & \weakabove{(1,6)}{(0,1)} & \weakabove{(0,1)}{(0,1)}\\
            \weakabove{(1,2)}{(3,4)} & 1 & \weakabove{(3,6)}{(3,4)} & \weakabove{(1,6)}{(3,4)} & \weakabove{(0,1)}{(3,4)}\\
            \weakabove{(1,2)}{(2,5)} & \weakabove{(4,5)}{(2,5)} & 1 & \weakabove{(1,6)}{(2,5)} & \weakabove{(0,1)}{(2,5)}\\
            \weakabove{(1,2)}{(0,5)} & \weakabove{(4,5)}{(0,5)} & \weakabove{(3,6)}{(0,5)} & 1 & \weakabove{(0,1)}{(0,5)}\\
            \Fdiag{(1,2)}{(1,8)} & \Fdiag{(4,5)}{(1,8)} & \Fdiag{(3,6)}{(1,8)} & \Fdiag{(1,6)}{(1,8)} & \Fdiag{(0,1)}{(1,8)}\\
        \end{pmatrix}}
    \]
    }

    For the diagonal sums, recall that for $(a, b) \to F^{D\Sigma}$ we have $j = 2n - a - b$, $m = \min(n, 2n - b)$ and $c = b - a$.
    Calculating the diagonal sums, we have
    \begin{align*}
        \Fdiag{(1,2)}{(1,8)} 
        &= \sum_{i=3}^{4} \binom{5}{5 + 1 - i} - \sum_{i = -1}^{0} \binom{5}{5 + 1 - i}&(j = 5,\, m=4,\,c=1)\\
        &=  \binom{5}{3} + \binom{5}{2} - \binom{5}{7} - \binom{5}{6}\\
        &= 10 + 10 - 0 - 0\\
        &= 20\\
        \Fdiag{(4,5)}{(1,8)} &=  1\\
        \Fdiag{(3,6)}{(1,8)} &=  1 \\
        \Fdiag{(1,6)}{(1,8)} 
        &=\sum_{i=1}^{2} \binom{1}{1 + 1 - i} - \sum_{i = -5}^{-4} \binom{1}{1 + 1 - i} &(j = 1,\,m=2,\,c=5)\\
        &= \binom{1}{1} + \binom{1}{0} - \binom{1}{7} - \binom{1}{6}\\
        &= 1 + 1 + 0 + 0 \\
        &= 2\\
    \end{align*}
    \begin{align*}
        \Fdiag{(0,1)}{(1,8)} 
        &= \sum_{i=3}^{4} \binom{7}{7+0-i} - \sum_{i=-2}^{-1} \binom{7}{7+0-i}&(j = 7,\,m=4,\,c=1)\\
        &= \binom{7}{4} + \binom{7}{3} - \binom{7}{9} - \binom{7}{8}\\
        &= 35 + 35 - 0 - 0\\
        &= 70\\
    \end{align*}

    Putting this together, we have
    \begin{align*}
        \abs{C_w} &= \det{
            \renewcommand{\arraystretch}{1.2}
        \begin{pmatrix}
            1 & 0 & 0 & 0 & \binom{0}{0} - \binom{0}{2}\\
            \binom{4}{2} - \binom{4}{4} & 1 & 0 & 0 & \binom{6}{3} - \binom{6}{5}\\
            \binom{4}{3} - \binom{4}{5} & 0 & 1 & 0 & \binom{6}{4} - \binom{6}{6}\\
            0 & 0 & 0 & 1 & \binom{4}{4} - \binom{4}{6}\\
            20 & 1 & 1 & 2 & 70\\
        \end{pmatrix}
        }\\
        &= 
        \det
        \begin{pmatrix}
            1 & 0 & 0 & 0 &1\\
            5 & 1 & 0 & 0 & 14\\
            4 & 0 & 1 & 0 & 14\\
            0 & 0 & 0 & 1 &1\\
            20 & 1 & 1 & 2 & 70\\
        \end{pmatrix}&\\
        &= 29
    \end{align*}
\end{example}

\subsection{Type \texorpdfstring{$D$}{D}}
\label{ssec:type-D-counting}

Getting an explicit formula for type $D$ is a little more complex since we can't
associate it to a $\Z^2$-lattice as in the type $A$ and $B$ cases.
This implies we need a different approach for calculating the number of paths.
Recall that $\Gamma_{D_n}$ has four parts where each of the two middle parts have
multiple diagrams.
We associate to each point of $\Gamma_{D_n}$ a coordinate   $(x,y)$ of the $\Z^2$-lattice together with a double index 
 which determines which 
part of the diagram $\Gamma_{D_n}$ we are in. More precisely, the first index 
${u\in\left\{ 1,2,3,4 \right\}}$ determines which of the four parts  $\Gamma_{D_n}^{u}$ we are considering
 while  in the  two middle cases, i.e., when  $u=2$ or $3$,  the second index   $v=1,\ldots,n-1$ 
   determines which multiple copy we are in. 
  For $u=1$ or 4 we set the default value  $v=0$,  since we have no multiple copies to consider. 
   Altogether, the possible pairs for $(u,v)$ are 
$(1,0),(4,0)$ and $(2,v), (3,v)$ with $ v =1,\ldots,n-1$. 
In what follows, we give examples for the points $(x, y)_{(u, v)}$ for $D_4$ in each case.
We break this down by the different parts of $\Gamma_{D_n}$.

\begin{itemize}
    \item[$\Gamma_{D_n}^1$:] For all vertices in $\Gamma_{D_n}^1$ we let $u = 1$ and $v = 0$.
        As $\Gamma_{D_n}^1$ looks like a type $A_{n-2}$ diagram, we let $x$ and $y$ be the
        type $A$ coordinates associated to each point.
        In other words, the vertex at the south west of the diagram has $x = 0$ and $y = 1$, \ie $I = (0, 1)_{(1, 0)}$.
            \begin{center}
                \begin{tikzpicture}[thick,decoration={markings,mark=at position 0.9 with {\arrow{>}}}]
    \node at (-3, 1) {$\Gamma_{D_4}^1$:};
    \draw[postaction={decorate}] (0,0) -- (0,1);
    \draw[postaction={decorate}] (0,1) -- (0,2);
    \draw[postaction={decorate}] (0,0) -- (1,0);
    \draw[postaction={decorate}] (0,1) -- (1,1);
    \draw[dashed] (0,2) -- (1,2);
    \draw[postaction={decorate}] (1,0) -- (1,1);
    \draw[postaction={decorate}] (1,1) -- (1,2);
    \draw[postaction={decorate}] (1,1) -- (2,1);
    \draw[dashed] (1,2) -- (2,2);
    \draw[postaction={decorate}] (2,1) -- (2,2);
    \node at (0.5, 0.5) {$\alpha_{11}$};
    \node at (0.5, 1.5) {$\alpha_{12}$};
    \node at (1.5, 1.5) {$\alpha_{22}$};
    \draw[fill=black] (0, 0) circle[radius=2pt, fill=black] node[below left] {$I = (0, 1)_{(1, 0)}$};
    \draw[fill=black] (1, 0) circle[radius=2pt, fill=black] node[below right] {$(1, 1)_{(1, 0)}$};
    \draw[fill=black] (0, 1) circle[radius=2pt, fill=black] node[left] {$(0, 2)_{(1, 0)}$};
    \draw[fill=black] (0, 2) circle[radius=2pt, fill=black] node[above left] {$(0, 3)_{(1, 0)}$};
    \draw[fill=black] (1, 1) circle[radius=2pt, fill=black] node[below right] {$(1, 2)_{(1, 0)}$};
    \draw[fill=black] (1, 2) circle[radius=2pt, fill=black] node[above] {$(1, 3)_{(1, 0)}$};
    \draw[fill=black] (2, 1) circle[radius=2pt, fill=black] node[right] {$(2, 2)_{(1, 0)}$};
    \draw[fill=black] (2, 2) circle[radius=2pt, fill=black] node[above right] {$(2, 3)_{(1, 0)}$};
\end{tikzpicture}
            \end{center}
    \item[$\Gamma_{D_n}^{2,i}$:] For all vertices in $\Gamma_{D_n}^{2,i}$ we let $u = 2$ and $v = i$.
        We assume we are on a $\Z^2$-lattice and place $\Gamma_{D_n}^{2, i}$ so that the bottom left most vertex is at $(i-1, 0)$.
            \begin{center}
                \begin{tikzpicture}[thick,decoration={markings,mark=at position 0.9 with {\arrow{>}}}]    
    \node at (-2, 0.5) {$\Gamma_{D_4}^{2,2}$:};
    \draw[postaction={decorate}] (1,0) -- (2,0);
    \draw[postaction={decorate}] (2,0) -- (3,0);
    \draw[postaction={decorate}] (1,0) -- (1,1);
    \draw[postaction={decorate}] (2,0) -- (2,1);
    \draw[postaction={decorate}] (3,0) -- (3,1);
    \draw[thick, dashed] (0,0) -- (0,1);
    \draw[thick, dashed] (0,0) -- (1,0);
    \draw[thick, dashed] (0,1) -- (2,1);
    \draw[postaction={decorate}] (3,1) -- (2,1);
    \node at (0.5, 0.5) {$\alpha_{13}$};
    \node at (1.5, 0.5) {$\alpha_{23}$};
    \node at (2.5, 0.5) {$\alpha_{33}$};
    \draw[fill=black] (1, 0) circle[radius=2pt, fill=black] node[below left] {$(1, 0)_{(2, 2)}$};
    \draw[fill=black] (2, 0) circle[radius=2pt, fill=black] node[below] {$(2, 0)_{(2, 2)}$};
    \draw[fill=black] (3, 0) circle[radius=2pt, fill=black] node[below right] {$(3, 0)_{(2, 2)}$};
    \draw[fill=black] (1, 1) circle[radius=2pt, fill=black] node[above left] {$(1, 1)_{(2, 2)}$};
    \draw[fill=black] (2, 1) circle[radius=2pt, fill=black] node[above] {$(2, 1)_{(2, 2)}$};
    \draw[fill=black] (3, 1) circle[radius=2pt, fill=black] node[above right] {$(3, 1)_{(2, 2)}$};
\end{tikzpicture}
            \end{center}
    \item[$\Gamma_{D_n}^{3,i}$:] For all vertices in $\Gamma_{D_n}^{3,i}$ we let $u = 3$ and $v = i$.
        We assume we are on a $\Z^2$-lattice and place $\Gamma_{D_n}^{3, i}$ so that the bottom left most vertex is at $(0, 0)$.
            \begin{center}
                \begin{tikzpicture}[thick,decoration={markings,mark=at position 0.9 with {\arrow{>}}}]
    \node at (-2, 0.5) {$\Gamma_{D_4}^{3,2}$:};
    \draw[postaction={decorate}] (0,0) -- (1,0);
    \draw[postaction={decorate}] (1,0) -- (3,0);
    \draw[postaction={decorate}] (3,0) -- (4,0);
    \draw[postaction={decorate}] (0,0) -- (0,1);
    \draw[postaction={decorate}] (1,0) -- (1,1);
    \draw[postaction={decorate}] (3,0) -- (3,1);
    \draw[postaction={decorate}] (4,0) -- (4,1);
    \draw[] (0,1) -- (1,1);
    \draw[dashed] (1,1) -- (3,1);
    \draw[postaction={decorate}] (4,1) -- (3,1);
    
    \node at (0.5, 0.5) {$\alpha_{12,44}$};
    \node at (2, 0.5) {$\alpha_{22,44}$};
    \node at (3.5, 0.5) {$\alpha_{44}$};
    \draw[fill=black] (0, 0) circle[radius=2pt, fill=black] node[below left] {$(0, 0)_{(3, 2)}$};
    \draw[fill=black] (1, 0) circle[radius=2pt, fill=black] node[below] {$(1, 0)_{(3, 2)}$};
    \draw[fill=black] (3, 0) circle[radius=2pt, fill=black] node[below] {$(2, 0)_{(3, 2)}$};
    \draw[fill=black] (4, 0) circle[radius=2pt, fill=black] node[below right] {$(3, 0)_{(3, 2)}$};
    \draw[fill=black] (0, 1) circle[radius=2pt, fill=black] node[above left] {$(0, 1)_{(3, 2)}$};
    \draw[fill=black] (1, 1) circle[radius=2pt, fill=black] node[above] {$(1, 1)_{(3, 2)}$};
    \draw[fill=black] (3, 1) circle[radius=2pt, fill=black] node[above] {$(2, 1)_{(3, 2)}$};
    \draw[fill=black] (4, 1) circle[radius=2pt, fill=black] node[above right] {$(3, 1)_{(3, 2)}$};
\end{tikzpicture}
            \end{center}
    \item[$\Gamma_{D_n}^4$:] For all vertices in $\Gamma_{D_n}^4$ we let $u = 4$ and $v = 0$.
        As $\Gamma_{D_n}^4$ looks like the top half of a $B_{n-2}$ diagram we ``place'' $\Gamma_{D_{n}}^4$ as if it were $B_{n-2}$.
        This implies that the south west corner of $\Gamma_{D_n}^4$ would have $x = 0$ and $y = n-2$.
        In particular, $F = (1, 2n)_{(4, 0)}$.
            \begin{center}
                \begin{tikzpicture}[thick,decoration={markings,mark=at position 0.9 with {\arrow{>}}}]    
    \node at (-3, 1) {$\Gamma_{D_4}^4$:};
    \draw[postaction={decorate}] (0,0) -- (0,1);
    \draw[postaction={decorate}] (0,1) -- (0,2);
    \draw[postaction={decorate}] (0,0) -- (1,0);
    \draw[postaction={decorate}] (0,1) -- (1,1);
    \draw[postaction={decorate}] (0,2) -- (1,2);
    \draw[postaction={decorate}] (1,0) -- (1,1);
    \draw[postaction={decorate}] (1,1) -- (1,2);
    \draw[postaction={decorate}] (1,0) -- (2,0);
    \draw[postaction={decorate}] (2,1) -- (1,1);
    \draw[postaction={decorate}] (2,0) -- (2,1);
    \node at (0.5, 0.5) {$\alpha_{14}$};
    \node at (0.5, 1.5) {$\alpha_{14, 22}$};
    \node at (1.5, 0.5) {$\alpha_{24}$};
    \draw[fill=black] (0, 0) circle[radius=2pt, fill=black] node[below left] {$(0, 6)_{(4, 0)}$};
    \draw[fill=black] (1, 0) circle[radius=2pt, fill=black] node[below] {$(1, 6)_{(4, 0)}$};
    \draw[fill=black] (2, 0) circle[radius=2pt, fill=black] node[below right] {$(2, 6)_{(4, 0)}$};
    \draw[fill=black] (0, 1) circle[radius=2pt, fill=black] node[left] {$(0, 7)_{(4, 0)}$};
    \draw[fill=black] (1, 1) circle[radius=2pt, fill=black] node[above right] {$(1, 7)_{(4, 0)}$};
    \draw[fill=black] (2, 1) circle[radius=2pt, fill=black] node[right] {$(2, 7)_{(4, 0)}$};
    \draw[fill=black] (0, 2) circle[radius=2pt, fill=black] node[above left] {$(0, 8)_{(4, 0)}$};
    \draw[fill=black] (1, 2) circle[radius=2pt, fill=black] node[above right] {$(1, 8)_{(4, 0)} = F$};
\end{tikzpicture}
            \end{center}
\end{itemize}

Additionally, since a particular root might be associated to multiple boxes, when we
refer to a root, we will include \emph{all} boxes when describing corners.
\begin{example}
    Let us look at the coordinates associated to corners for certain roots in type $D_5$ in order to understand how the coordinates work.
    We use \autoref{fig:gamma_D} for $D_5$ as a reference.
    We have the following corners for the following roots:
    \begin{align*}
        \alpha_{22} &:\; (1, 2)_{(1, 0)} \to (2, 2)_{(1, 0)} \to (2, 3)_{(1, 0)}\\
        \alpha_{25} &:\; (1, 3)_{(4, 0)} \to (2, 3)_{(4, 0)} \to (2, 4)_{(4, 0)}\\
        \alpha_{34} &:\; (2, 0)_{(2, 1)} \to (3, 0)_{(2, 1)} \to (3, 1)_{(2, 1)}\\
        & \quad\text{and}\quad (2, 0)_{(2, 2)} \to (3, 0)_{(2, 2)} \to (3, 0)_{(2, 2)}\\
        & \quad\text{and}\quad (2, 0)_{(2, 3)} \to (3, 0)_{(2, 3)} \to (3, 0)_{(2, 3)}\\
    \end{align*}
    Notice that for $\alpha_{34}$, since there is no edge below $\alpha_{34}$ in $\Gamma_{D_n}^{2,4}$ there is no fourth corner associated to it.
\end{example}

As we are counting paths using \autoref{thm:path-nonoverlapping}, it suffices to find
the number of paths that start at either the final point $F_i$ of a corner $F_i$ or $I$; and that finish at either the initial
point $I_i$ of a corner or $F$.
With this in mind, we calculate $\gamma(V_1 \to V_2)$ where we let $V_1 = (x_1, y_1)_{(u_1, v_1)}$ and let $V_2 = (x_2, y_2)_{(u_2, v_2)}$.
We additionally suppose that $V_1$ is either $I$ or $F_i$ for some path $\pi_i$ and $V_2$ is either $F$ or $I_i$ for some path $\pi_i$.
We start with $u = 4$ and decrease from there.

First, notice that if $u_2 < u_1$ then $\gamma(V_1 \to V_2) = 0$.
Therefore, we only need to consider when $u_2 \geq u_1$.
For ease of notation we set the following notation:
\begin{align*}
    \gamma_B( (x_1, y_1) \to (x_2, y_2)) &= \begin{cases}
        \weakabove{(x_1, y_1)}{(x_2,y_2)} & \text{if }(x_2, y_2) \neq (1, 2n)\\
        \Fdiag{(x_1, y_1)}{(x_2,y_2)} & \text{if }(x_2, y_2) = (1, 2n)\\
    \end{cases}
\end{align*}
This is the precisely the type $B$ digraph counting formula from earlier where $(x_2, y_2) = (1, 2n)$ is precisely when $V_2 = F$.

\paragraph{\texorpdfstring{$\b{u_1 = 4}$}{u1=4}:} When both $V_1$ and $V_2$ are in $\Gamma_{D_n}^4$, then it's clear that
\[
    \gamma(V_1 \to V_2) = \gamma_B( (x_1, y_1) \to (x_2, y_2)).
\]

\paragraph{\texorpdfstring{$\b{u_1 = 3}$}{u1 = 3}:}
We suppose $V_1 \in \Gamma_{D_n}^{3,v_i}$.
Since $V_1$ must be a final vertex of a corner then $y_1 = 1$.
As $u_2 \geq u_1 = 3$, then $u_2 \in \left\{ 3, 4 \right\}$. 
If $u_2 = 3$, then $V_2$ must be an initial vertex of a corner implying $y_2 = 0$; forcing $\gamma(V_1 \to V_2) = 0$.
Otherwise, we suppose that $V_2 \in \Gamma_{D_n}^4$.
By construction, there is a unique path $\pi$ from $V_1$ to $\Gamma_{D_n}^4$ depending on the relationship between $x_1$ and $v_1$.
If $x_1 < v_1$ then this unique path $\pi$ is given by $V_1 \to (v_1-1, 1)_{(3, v_1)} \to (v_1 - 1, n-2)_{(4, 0)}$.
If $x_1 = n-1$ then this unique path $\pi$ is given by $V_1 \to (n-2, 1)_{(3, v_1)} \to (n-2, n-2)_{(4, 0)}$.
Finally, in all other cases, the unique path $\pi$ is given by $V_1 \to (x_1, n-2)_{(4, 0)}$.
Then it suffices to count the number of paths from $F(\pi)$ to $V_2$.
Putting this together we have:
\[
    \gamma(V_1 \to V_2) = \begin{cases}
        \gamma_B( (v_1 - 1, n-2) \to (x_2, y_2))
            & \text{if } x_1 < v_1\\
        \gamma_B( (x_1, n-2) \to (x_2, y_2))
            & \text{if } v_1 \leq x_1 < n-1\\
        \gamma_B( (n-2, n-2) \to (x_2, y_2) )
            & \text{if } x_1 = n-1 \\
    \end{cases}
\]

\paragraph{\texorpdfstring{$\b{u_1 = 2}$}{u1=2}:}
We suppose $V_1 \in \Gamma_{D_n}^{2,v_i}$.
As in the previous case, we know $y_1 = 1$.
Similarly, if $u_2 = 2$ then $\gamma(V_1 \to V_2) = 0$.
Therefore we suppose that $u_2 > u_1$ and we break  this into two cases depending on if $u_2 = 3$ or $u_2 = 4$.

If $u_2 = 3$ then $y_2 = 0$ as $V_2$ must be the inital vertex of a corner in $\Gamma_{D_n}^{3}$.
Since $v_1$ tells us that $V_1 \in \Gamma_{D_n}^{2, v_1}$ we know by the construction of $\Gamma_{D_n}$ that 
there is a unique edge leaving $V_1$.
This edge goes to $(v_1-1, 0)_{(3, x)}$ where $x = \min(x_1 +1, n-1)$.
Therefore, there is a path to $V_2$ preciesly when $x = v_2$ (or else they're in different diagrams and no path exists) and when $x_2 \geq v_1-1$ in which case there is precisely one path to get to $V_2$.
In other words:
\[
    \gamma( (x_1, 1)_{(2, v_1)} \to (x_2, 0)_{(3, v_2)}) = \begin{cases}
        1& \text{if } x_2 \geq v_1-1 \text{~and~} v_2 = \min(x_1+1, n-1)\\
        0 & \text{otherwise}
    \end{cases}
\]

Finally, if $u_2 = 4$, then $V_2 \in \Gamma_{D_n}^4$.
As before, there is a unique path from $V_1$ to $(v_1 - 1, 0)_{(3, v)}$ where $v = \min(x_1 + 1, n-1)$.
In other words,
\[
    \gamma( V_1 \to V_2) = \gamma\left( (v_1-1, 0)_{(3, v)} \to V_2 \right).
\]
First, we calculate how to get to the bottom row in $\Gamma_{D_n}^4$ from $(v_1 - 1, 0)_{(3, v)}$.
In $\Gamma_{D_n}^{3, v}$ recall that the first $v-1$ horizontal edges on the top row are pointed east and the final edge is pointed west with all other top row edges removed.
Since $v_1 - 1 < v$, there is at least one path to $(j, 1)_{(3, v)}$ for all $v_1 - 1 \leq j \leq n-2$.
As $v = \min(x_1 + 1, n-1)$, we break this into two parts.
If $v = n-1$, then there are $(n - v_1 + 1)$ paths to $(n-2, 1)_{(3, v)}$.
If $v = x_1 + 1 < n-1$, then there are $(v - v_1 + 1) = (x_1 - v_1 + 2)$ paths to $(j, 1)_{(3, v)}$ whenever $v_1 - 1 \leq j \leq v - 1 = x_1$, there is precisely one path to $(j, 1)_{(3, v)}$ whenever $x_1 = v - 1 < j < n-2$ and there are two paths to $(j, 1)_{(3, v)}$ whenever $j = n-2$.
From $(j, 1)_{(3, v)}$ there is then a unique path to $(j, n-2)_{(4,0)}$.
To finish off the count we not that $\gamma\left( (j, n-2)_{(4,0)} \to V_2 \right)$ is then given by $\gamma_B\left( (j, n-2) \to (x_2, y_2) \right)$.
Putting this altogether, we have
\[
    \gamma(V_1 \to V_2) = \begin{cases}
        (n - v_1+1)\gamma_B( (n-2, n-2) \to (x_2, y_2))
            & \text{if } x_1 \geq n-2 \\
           2 \gamma_B( (n-2, n-2) \to (x_2, y_2))\\
            \qquad + (x_1-v_1 + 2)\gamma_B( (x_1, n-2) \to (x_2, y_2))\\
            \qquad+ \sum\limits_{i = x_1 + 1}^{n-3}\gamma_B( (i, n-2) \to (x_2, y_2))
            & \text{if } x_1 < n-2\\
    \end{cases}
\]

\paragraph{\texorpdfstring{$\b{u_1=1}$}{u1=1}:}
As a final case, we suppose $V_1 \in \Gamma_{D_n}^4$.
Like with the previous cases, we break this down into the four components, with the last component being the most complex.

If $u_2 = 1$, then $V_2$ is the beginning vertex of a corner.
In other words $y_2 \leq n-2$ and since $\Gamma_{D_n}^1$ is a type $A$ digraph we have
\[
    \gamma( V_1 \to V_2) = \weakabove{(x_1, y_1)}{(x_2, y_2)}.
\]

If $u_2 = 2$, then, again, $V_2$ is the beginning vertex of a corner, \ie $y_2 = 0$.
Since the bottom edges of $\Gamma_{D_n}^{2,v_2}$ are directed east, there is a unique path from $(v_2 - 1, n-1)_{(1, 0)}$ to $V_2$.
In other words
\[
    \gamma(V_1 \to V_2) = \gamma(V_1 \to (v_2 - 1, n-1)_{(1, 0)})
\]
But since there are no edges in the top row of $\Gamma_{D_n}^1$, we have:
\[
    \gamma(V_1 \to V_2) = \gamma(V_1 \to (v_2 - 1, n-2)_{(1, 0)}) = \weakabove{(x_1, y_1)}{(v_2 - 1, n-2)}.
\]

If $u_2 = 3$, then as before, $V_2$ is the beginning vertex of a corner, \ie $y_2 = 0$.
Since all the bottom edges of $\Gamma_{D_n}^{3, v_2}$ are directed east, then there is a path to $V_2$ coming from $\Gamma_{D_n}^2$ for each $0 \leq i \leq \min(x_2, v_2 - 1)$.
If $v_2 \neq n-1$ then each of the $i$ has a unique path to $\Gamma_{D_n}^1$ to the point $(i, n-1)_{(1, 0)}$.
If $v_2 = n-1$ then each $i$ has two paths to the point $(n-2, n-1)_{(1, 0)}$ in $\Gamma_{D_n}^1$.
Finally, as in the previous case, since there are no edges in the top row of $\Gamma_{D_n}^1$, we can reduce the $y$ value of all of these points and calculate using the digraph of type $A$.
In other words:
\[
    \gamma(V_1 \to V_2) = \begin{cases}
        \sum\limits_{i = 0}^{\min(x_2, v_2 - 1)} \weakabove{(x_1, y_1)}{(i, n-2)} & \text{if }v_2 \neq n-1\\
        \sum\limits_{i = 0}^{\min(x_2, v_2 - 1)} 2 \weakabove{(x_1, y_1)}{(i, n-2)} & \text{if } v_2 = n-1
    \end{cases}
\]

If $u_2 = 4$ then we are in for a treat as we must traverse all four parts of our digraph.
We start from $V_2$ and work our way down.
From a bottom vertex $(i, n-2)_{(4, 0)}$ in $\Gamma_{D_n}^4$ we have $\gamma_B( (i, n-2) \to (x_2, y_2))$ number of paths where $0 \leq i \leq n-2$.
For each $i$, there are $i + 1$ paths to $(i, n-2)_{(4, 0)}$ from $\Gamma_{D_n}^3$.
In particular, there is a path from $(i, 1)_{(3, j)} \in \Gamma_{D_n}^{3, j}$ to $(i, n-2)_{(4, 0)}$ for each $1 \leq j \leq i + 1$.
For each $j$, there are $\lambda_{i, j, k}$ paths from $(i, 1)_{(3,j)}$ to $(k, 0)_{(3, j)}$ for $0 \leq k \leq j - 1$ where
\[
    \lambda_{i,j,k} = \begin{cases}
        2 & \text{if }i = n-2\\
        1 & \text{if } n-2 > i > j-1\\
        j-k+1 &\text{if }i = j-1
    \end{cases}
\]
Ther is a unique edge from $\Gamma_{D_n}^2$ to $(k, 0)_{(3, j)}$ and this edge originates from $(j-1, 1)_{(2, k+1)}$.
If $j-1 \neq n-2$ then there is exactly one path from $(k, 0)_{(2, k+1)}$ to $(j - 1, 1)_{(2, k+1)}$, else there are two paths.
This is encapsulated in the variable $\mu_j$ where
\[
    \mu_j = \begin{cases}
        1 & \text{if }j < n-1\\
        2 & \text{if }j = n-1
    \end{cases}
\]
Finally, as there is only one path from $(k, n-2)_{(1, 0)}$ to $(k, 0)_{(2, k+1)}$, it suffices to count paths from $V_1$ to $(k, n-2)_{(1, 0)}$.
Putting this together gives:
\begin{equation}
    \label{eq:1-4}
    \gamma(V_1 \to V_2) = \sum_{i=0}^{n-2} \sum_{j = 1}^{i+1} \sum_{k = 0}^{j-1} \mu_j \lambda_{i,j,k} \weakabove{(x_1, y_1)}{(k, n-2)} \cdot \gamma_B( (i, n-2) \to (x_2, y_2))
\end{equation}

Putting this altogether, we have the following lemma.
\begin{lemma}
\label{lem:type_D_counting}
    Let $V_1 = (x_1, y_1)_{(u_1, v_1)}$ and $V_2 = (x_2, y_2)_{(u_2, v_2)}$ be two vertices in $\Gamma_{D_n}$ where $V_1$ is either $I$ or a final point $F_i$ of some path and where $V_2$ is either $F$ or an initial point $I_i$ of some path.
    Then
    \[
        \gamma(V_1 \to V_2) = \begin{cases}
            \weakabove{(x_1, y_1)}{(x_2, y_2)}
                & \text{if }u_1 = 1,\,u_2 = 1\\
            \weakabove{(x_1, y_1)}{(v_2 - 1, n-2)}
                & \text{if }u_1 = 1,\,u_2 = 2\\
            \sum\limits_{i = 0}^{\min(x_2, v_2 - 1)} \weakabove{(x_1, y_1)}{(i, n-2)}
                & \text{if }u_1 = 1,\,u_2=3,\,v_2 \neq n-1\\
            \sum\limits_{i = 0}^{\min(x_2, v_2 - 1)} 2\weakabove{(x_1, y_1)}{(i, n-2)}
                & \text{if }u_1=1,\,u_2=3,\, v_2 = n-1\\
            \gamma_D(V_1 \to V_2)
                & \text{if } u_1=1,\,u_2 = 4\\
            1
                & \text{if } u_{1}=2,\,u_2=3,\,x_2 \geq v_1-1 \text{~and~}\\
                & \qquad\qquad v_2 = \min(x_1+1, n-1)\\
            (n - v_1+1)\gamma_B( (n-2, n-2) \to (x_2, y_2))
                & \text{if } u_1 = 2,\,u_2=4,\,x_1 \geq n-2\\
            (x_1 - v_1 + 2)\gamma_B( (x_1, n-2) \to (x_2, y_2))\\
                \qquad+ 2\gamma_B( (n-2, n-2) \to (x_2, y_2))\\
                \qquad+ \sum\limits_{j = x_1+1}^{n-3}\gamma_B( (j, n-2) \to (x_2, y_2))
                & \text{if } u_1=2,\,u_2=4,\,x_1 < n-2\\
            \gamma_B( (v_1 - 1, n-2) \to (x_2, y_2))
                & \text{if } u_1=3,\,u_2=4,\,x_1 < v_1\\
            \gamma_B( (x_1, n-2) \to (x_2, y_2))
                & \text{if } u_1=3,\,u_2=4,\,v_1 \leq x_1 < n-1\\
            \gamma_B( (n-2, n-2) \to (x_2, y_2) )
                & \text{if } u_1= 3,\,u_2=4,\,x_1 = n-1\\
            \gamma_{B} \left( (x_1, y_1) \to (x_2, y_2) \right)
                & \text{if }u_1 = 4,\,u_2=4\\
            0 & \text{otherwise}
        \end{cases}
    \]
    where $\gamma_D(V_1 \to V_2)$ is equal to equation~\autoref{eq:1-4}.
\end{lemma}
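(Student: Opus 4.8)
The plan is to prove the formula by a finite case analysis on the pair $(u_1,u_2)$ recording which of the four parts $\Gamma^1_{D_n},\dots,\Gamma^4_{D_n}$ contains $V_1$ and $V_2$. The analysis rests on two structural facts built into the construction of $\Gamma_{D_n}$ in \autoref{sec:root_posets_as_digraphs}. First, the only inter-part edges run from $\Gamma^a_{D_n}$ to $\Gamma^{a+1}_{D_n}$, so no path exists from $V_1$ to $V_2$ whenever $u_2<u_1$, and more generally a path from $V_1$ to $V_2$ decomposes canonically as (a subpath inside $\Gamma^{u_1}_{D_n}$ ending at a top-row sink) followed by a connecting edge, then a subpath inside the next part, and so on, up to a subpath inside $\Gamma^{u_2}_{D_n}$ ending at $V_2$. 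Second, since $V_1$ is either the source $I$ or the final vertex of a corner and $V_2$ is either the sink $F$ or the initial vertex of a corner, the coordinates of $V_1$ and $V_2$ inside the middle parts are pinned: $y_1=1$ (top row) when $u_1\in\{2,3\}$ and $y_2=0$ (bottom row) when $u_2\in\{2,3\}$. This immediately forces $\gamma(V_1\to V_2)=0$ in the cases $u_1=u_2=2$, $u_1=u_2=3$, and $u_1=3,u_2=2$, since one cannot travel from the top row of a middle diagram to its bottom row, and it also explains why in the surviving cases $V_2$ is always a bottom-left box vertex and $V_1$ a top-right box vertex.

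For the cases that survive, I would unwind the decomposition above part by part. Inside $\Gamma^1_{D_n}$ (a type $A_{n-2}$ staircase with the top edges deleted) and inside $\Gamma^4_{D_n}$ (the top half of a type $B_{n-2}$ diagram) the number of subpaths between two lattice points is exactly the weakly-above-the-diagonal count of \autoref{thm:count-above-diag}, respectively its diagonal-sum variant; this is precisely where $\weakabove{(x_1,y_1)}{(x_2,y_2)}$, $\Fdiag{(x_1,y_1)}{(x_2,y_2)}$ and the abbreviation $\gamma_B$ enter. Inside each middle copy $\Gamma^{2,i}_{D_n}$ or $\Gamma^{3,i}_{D_n}$ the orientation was chosen so that from any admissible entry vertex on the bottom row there is an essentially \emph{unique} way to reach a prescribed exit vertex on the top row, up to the small, explicitly enumerable multiplicities created by the leftmost eastward top-row steps of $\Gamma^{3,j}_{D_n}$ and by the furthest-north westward top edge; I would read these off directly from the two local pictures of $\Gamma^{2,i}_{D_n}$ and $\Gamma^{3,i}_{D_n}$ drawn in the construction, isolating them as the coefficients $\lambda_{i,j,k}$ (three regimes according to $i=n-2$, $j-1<i<n-2$, or $i=j-1$) and $\mu_j$. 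Each listed case of the formula then follows by composing: the number of ways to reach the relevant top-row vertex of $\Gamma^{u_1}_{D_n}$ from $V_1$, times the unique connecting edge(s), times the number of ways onward to $V_2$, which is again a $\gamma_B$-count or a shorter instance of the same composition. The cases $u_1=2,u_2=3$, $u_1=2,u_2=4$, $u_1=3,u_2=4$ and $u_1=1,u_2\in\{2,3\}$ are handled this way with at most one or two summations coming from the top-row multiplicities.

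The genuinely laborious case, and the main obstacle, is $u_1=1$, $u_2=4$, where a path must cross all four parts. Here I would fix the bottom vertex $(i,n-2)_{(4,0)}$ of $\Gamma^4_{D_n}$ through which the path enters the top part, the copy index $j$ of the middle-lower diagram $\Gamma^{3,j}_{D_n}$ it came through, and the exit column $k$ of that diagram, and sum the product $\mu_j\,\lambda_{i,j,k}$ of the middle multiplicities against the type-$A$ count $\weakabove{(x_1,y_1)}{(k,n-2)}$ from $V_1$ to the entry column of $\Gamma^{2,k+1}_{D_n}$ and the type-$B$ count $\gamma_B((i,n-2)\to(x_2,y_2))$ from $(i,n-2)_{(4,0)}$ onward — exactly the expression in equation~\autoref{eq:1-4} giving $\gamma_D(V_1\to V_2)$. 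The delicate points are getting the index ranges $0\le i\le n-2$, $1\le j\le i+1$, $0\le k\le j-1$ right and checking that the stated $\lambda_{i,j,k}$ correctly accounts for each of the three regimes produced by the partially deleted top rows; once this is verified, all remaining combinations of $(u_1,u_2)$ give $\gamma(V_1\to V_2)=0$ by the first structural fact, completing the case analysis. A final sanity check against the two worked $D_5$ antichains of \autoref{fig:examplesD}, together with the \texttt{sage} verification mentioned in the paper, confirms the bookkeeping.
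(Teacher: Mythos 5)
Your proposal is correct and follows essentially the same route as the paper: a case analysis on $(u_1,u_2)$ using the facts that inter-part edges only go from $\Gamma^a_{D_n}$ to $\Gamma^{a+1}_{D_n}$ and that $V_1$, $V_2$ are pinned to the top row (resp.\ bottom row) of the middle copies, reducing each surviving case to a composition of $\gamma_B$-counts with the explicit multiplicities $\mu_j$ and $\lambda_{i,j,k}$, and culminating in the triple sum of equation~\autoref{eq:1-4} for the $u_1=1$, $u_2=4$ case. The paper's own argument is exactly this part-by-part derivation, so there is nothing further to add.
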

Due to the nature of this graph, we do not expect an easier formula to appear.
Additionally, although this formula might seem complicated and have many cases, algorithmically (using a computer) these are extremely fast to calculate due to the nature of $\gamma_B$ and $\gamma_A$.

\newpage
Putting this all together, we have the following theorem.
\begin{theorem}
    \label{thm:type-D-count}
    Let $W$ be a type $D$ Weyl group and, for $w \in W$, let $N(w^{-1}) = 
    \left\{ \alpha_{1}, \ldots, \alpha_{k} \right\}$
    be its inversion set  where $\alpha_i$ are roots in $\Phi^+$.
    Let $\Pi$ be the set of corners
    \[
        \pi_i = (p-1, q)_{( s, t)} \to (p, q)_{(s, t)} \to (p, q+1)_{(s,t)}
    \]
    associated to the root $\alpha_i \in N(w^{-1})$.
    Then the number of regions in the Weyl cone $C_w$ in the Shi arrangement $\shiarr$ is given by:
    \[
        \order{C_w} = \det\begin{pmatrix}
            1 & \gamma(F_2 \to I_1) & \cdots & \gamma(F_k \to I_1) & \gamma(I \to I_1)\\
            \gamma(F_1 \to I_2) & 1 & \cdots & \gamma(F_k \to I_2) & \gamma(I \to I_2)\\
            \vdots & \vdots & \ddots & \vdots & \vdots\\
            \gamma(F_1 \to I_k) & \gamma(F_2 \to I_k) & \cdots & 1 & \gamma(I \to I_k)\\
            \gamma(F_1 \to F) & \gamma(F_2 \to F) & \cdots & \gamma(F_k \to F) & \gamma(I \to F)\\
        \end{pmatrix}
    \]
    where $I = (0,1)_{(1, 0)}$, $F=(1,2n)_{(4, 0)}$, $I_i = (p-1, q)_{(s, t)}$, $F_i = (p, q+1)_{(s,t)}$
    and $\gamma$ is the formula in \autoref{lem:type_D_counting}.

\end{theorem}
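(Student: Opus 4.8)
The plan is to imitate the proofs of \autoref{thm:type-A-count} and \autoref{thm:type-B-count}, reducing the statement to \autoref{thm:path-nonoverlapping} together with the case analysis already carried out in \autoref{lem:type_D_counting}. First I would apply \autoref{thm:num-in-cone} to identify $\order{C_w}$ with the number of antichains in the root poset restricted to $\Phi^+\bs N(w^{-1})$, and then \autoref{thm:nonoverlap-bij-antichains} (whose type $D$ case rests on \autoref{thm:D-bij}) to identify this number with the number of paths from $I$ to $F$ in $\Gamma_{D_n}$ that contain, as a subpath, no corner associated to a root of $N(w^{-1})$. Here $I=(0,1)_{(1,0)}$ is the unique source and $F=(1,2n)_{(4,0)}$ the unique sink in the coordinates fixed for the four parts $\Gamma^1_{D_n},\dots,\Gamma^4_{D_n}$.

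Next I would collect into $\Pi$ every corner attached to a root of $N(w^{-1})$; since a single root may label several boxes of $\Gamma_{D_n}$, this means including each of that root's corners. The structural point is that any corner is a length-two subpath made of one horizontal edge followed by one vertical edge, so no corner is a subpath of another and the last edge of one corner is never the first edge of another; hence $\Pi$ is a non-overlapping collection. Thus \autoref{thm:path-nonoverlapping}, applied with all weights equal to $1$, says the desired count is $\det(M_\Pi)$, and the entry of $M_\Pi$ indexed by the corner $\pi_i$ attached to $\alpha_i$ has initial and final vertices $I_i=(p-1,q)_{(s,t)}$ and $F_i=(p,q+1)_{(s,t)}$, exactly as in the statement, by the definition of the corner of a box in \autoref{ssec:boxes_and_corners} and the coordinate conventions above.

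It then remains to evaluate each entry of $M_\Pi$. Every such entry is a $\gamma(A\to B)$ whose source $A$ is either $I$ or the top right vertex of a box, and whose target $B$ is either $F$ or the bottom left vertex of a box; because the four-part construction of $\Gamma_{D_n}$ forces the first coordinate $u$ to be weakly increasing along any path and pins down exactly how the middle copies $\Gamma^{2,i}_{D_n}$ and $\Gamma^{3,i}_{D_n}$ are entered, these are precisely the transitions enumerated in \autoref{lem:type_D_counting}. Substituting the matching case of that lemma into every position of $M_\Pi$ produces the asserted determinant.

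I expect the genuinely delicate step to be the bookkeeping forced by a root labelling several boxes: one must check that enlarging $\Pi$ by those extra corners does not change the answer in an uncontrolled way --- concretely, that two corners of the same root can never both lie on a common path of $\Gamma_{D_n}$, so the corresponding off-diagonal entries of $M_\Pi$ vanish --- and then verify that the resulting block structure collapses consistently to the labelling $I_i,F_i$ used in the statement. Confirming, in parallel, that \autoref{lem:type_D_counting} really does cover every transition $(u_1,v_1)\to(u_2,v_2)$ that can arise between a source of the allowed form and a target of the allowed form is the other place where care is needed; everything else is the same inclusion--exclusion already run in types $A$ and $B$.
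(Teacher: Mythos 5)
Your proposal is correct and follows essentially the same route as the paper, whose entire proof is a one-line citation of \autoref{thm:nonoverlapping} and \autoref{lem:type_D_counting}; your chain \autoref{thm:num-in-cone} $\to$ \autoref{thm:nonoverlap-bij-antichains} $\to$ \autoref{thm:path-nonoverlapping} $\to$ substitution of \autoref{lem:type_D_counting} is exactly what that citation abbreviates. Your extra attention to roots labelling several boxes (so that $\Pi$ must contain every corner of every root in $N(w^{-1})$, with the off-diagonal entries between corners of one root vanishing because the copies $\Gamma_{D_n}^{2,i}$, $\Gamma_{D_n}^{3,i}$ are pairwise unreachable from one another) is a genuine point of care that the paper's statement and proof gloss over, and your treatment of it is right.
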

\begin{proof}
    This is a corollary of \autoref{thm:nonoverlapping} and \autoref{lem:type_D_counting}.
\end{proof}

\section{Narayana numbers}
\label{sec:narayana_numbers}
Given a Weyl group of type $X$, recall that the number of regions in the dominant cone of $\shiarr$ of type $X$ is given by the Catalan number of type $X$.
In particular, by \autoref{thm:bij-dom-anti}, the Catalan number of type $X$ is precisely the number of antichains in the root poset of type $X$. 
The \defn{Narayana number $\nar_{X, k}$} is then the number of antichains with cardinality $k$ in the root poset of type $X$ and the \defn{Narayana polynomial $\nar_X(t)$} is the polynomial $\sum \nar_{X, k}t^k$. 

Following the terminology in  \cite{DBStump},
if $P$ is an arbitrary poset,  the 
\defn{Poincar\'e number $\poin_{P,k}$} is  the number of antichains in $P$ with cardinality $k$  and  the \defn{Poincar\'e polynomial $\poin_P(t)$} is the polynomial $\sum \poin_{P, k}t^k$.
In this section we give a determinental formula for the Poincar\'e polynomial for each subposet  $\Phi^+_w$ of $\Phi^+$. 
Recalling that the map of  Theorem \ref{thm:bij-dom-anti}  bijects antichains to {\em separating walls}, 
the Poincar\'e polynomial gives  a refined counting of the Shi regions in the cone $C_w$, according to the number of  separating walls. 
Notice that in the case of the dominant cone, the Poincar\'e polynomial coincides with the Narayana one. 

\subsection{Formula}
Given a digraph $\Gamma_W$ for a Weyl group $W$ recall that the corner associated to a box is a length $2$ path which starts from the bottom left vertex $v^{bl}$, goes to the bottom right vertex $v^{br}$ and terminates at the top right vertex $v^{tr}$.
Given a path $\pi \in \Gamma_W$ from $v_1$ to $v_2$, we let $c(\pi)$ be the number of corners which are subpaths of $\pi$.

Although we would like to let the weight of $\gamma(v_1 \to v_2)$ be equal to $t^{c(\pi)}$ and apply \autoref{thm:nonoverlapping}, we cannot by our definition of weights.
In particular, since $\gamma(v_1 \to v_2 \to v_3) = \gamma(v_1 \to v_2) \cdot \gamma(v_2 \to v_3)$, if we let $v_1 \to v_2 \to v_3$ be some corner, then the left-hand side of this equation gives $\gamma(v_1 \to v_2 \to v_3) = t^1$ (as there is one corner) and the right hand side gives $\gamma(v_1 \to v_2) \cdot \gamma(v_2 \to v_3) = 1 \cdot 1 = 1$ as an edge can never be a corner and we get $t^1 = 1$.
Therefore $t = 1$ and everything breaks down.

It turns out that even though we can't define $\wt(\pi) = t^{c(\pi)}$, we can still count the number of corners in an identical way to \autoref{thm:nonoverlapping}.
For this we show that, due to our particular choice of non-overlapping paths, a version of \autoref{thm:nonoverlapping} where we count the number of corners still holds.
In the following, we let 
\[
    \gamma'(v_1 \to v_2) = \sum_{\pi \in \set{\pi}{I(\pi) = v_1,\,F(\pi) = v_2}} t^{c(\pi)}.
\]
We define $\gamma'(v_1 \to v_2 \st \text{some property})$ in a similar way following \autoref{ssec:non-overlapping_paths}.
\begin{theorem}
    \label{thm:narayana-poly}
    Let $\Gamma_W$ be the digraph associated to a Weyl group $W$ and let $\Pi = \left\{ \pi_1, \ldots, \pi_n \right\}$ be a collection of corners.
 Consider the $(n+1)\times (n+1)$ matrix 
    \begin{align*}
        M_{\Pi} &=  \begin{pmatrix}
            \gamma'{(I_1 \to I_1)} & \gamma'{(I_2 \xrightarrow{\pi_2} F_2 \to I_1)} & \cdots & \gamma'{(I_{n} \xrightarrow{\pi_n} F_{n} \to I_1)} & \gamma'{(I \to I_1)}\\
            \gamma'{(I_1 \xrightarrow{\pi_1} F_1 \to I_2)} & \gamma'{(I_2 \to I_2)} & \cdots & \gamma'{(I_{n} \xrightarrow{\pi_n} F_{n} \to I_2)} & \gamma'{(I \to I_2)}\\
            \vdots & \vdots & \ddots & \vdots & \vdots\\
            \gamma'{(I_1 \xrightarrow{\pi_1} F_1 \to I_{n})} & \cdots & \cdots & \gamma'{(I_{n} \to I_{n})} & \gamma'{(I \to I_n)}\\
            \gamma'{(I_1 \xrightarrow{\pi_1} F_1 \to F)} & \gamma'{(I_2 \xrightarrow{\pi_2} F_2 \to F)} & \cdots & \gamma'{(I_{n} \xrightarrow{\pi_n} F_{n} \to F)} & \gamma'{(I \to F)}\\
        \end{pmatrix}\\\\
        &= \begin{pmatrix}
            1 & t\cdot\gamma'{(F_2 \to I_1)} & \cdots & t\cdot\gamma'{(F_{n} \to I_1)} & \gamma'{(I \to I_1)}\\
            t\cdot\gamma'{(F_1 \to I_2)} & 1 & \cdots & t\cdot\gamma'{(F_n\to I_2)} & \gamma'{(I \to I_2)}\\
            \vdots & \vdots & \ddots & \vdots & \vdots\\
            t\cdot\gamma'{(F_1 \to I_{n})} & t\cdot \gamma'(F_2 \to I_n) & \cdots & 1 & \gamma'{(I \to I_n)}\\
            t\cdot\gamma'{(F_1 \to F)} & t\cdot\gamma'{(F_2 \to F)} & \cdots & t\cdot\gamma'{(F_{n} \to F)} & \gamma'{(I \to F)}\\
        \end{pmatrix}.
    \end{align*}
\medskip 
    Then $\gamma'\left( I \to F \st \text{ has no subpath in }\Pi \right) = \det(M_{\Pi})$.
\end{theorem}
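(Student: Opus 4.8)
The plan is to follow the structure of the proof of \autoref{thm:path-nonoverlapping} essentially verbatim, replacing the bare counting function $\gamma$ by the corner-weighted generating function $\gamma'$ and checking that the two mechanisms that drove the earlier argument (the vanishing of non-trivial cycles not through $n+1$, and the inclusion–exclusion identification of the surviving terms) still go through when the ``weight'' of a path is recorded as $t^{c(\pi)}$. The only genuinely new point is that $t^{c(\pi)}$ is \emph{not} multiplicative over concatenation of arbitrary paths, so I must be careful that whenever the earlier proof split a path at a vertex, the split happens at a vertex where no corner straddles the cut. The key observation making this work is exactly the one flagged in the paragraph preceding the theorem: the paths in $\Pi$ are \emph{corners}, i.e.\ length-two subpaths whose interior is a single vertex, and the concatenations arising in the proof are always of the shape $(\cdots \to I_i) \cup \pi_i \cup (F_i \to \cdots)$ or $(\cdots \to v) \cup (v \to \cdots)$ where $v = I_i$ or $v = F_i$ is an endpoint of some corner. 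A corner $\pi_j$ that is a subpath of such a concatenated path either is a subpath of one of the pieces, or has its middle vertex equal to the cut vertex $v$; but if the middle vertex of $\pi_j$ were $I_i$ or $F_i$ then $\pi_j$ would share an edge with $\pi_i$, contradicting non-overlapping. Hence $c$ \emph{is} additive across every cut used in the proof, and we get the factorisation identities $\gamma'(v_1 \to v_2 \to v_3) = \gamma'(v_1\to v_2)\cdot\gamma'(v_2\to v_3)$ and $\gamma'(v_1 \xrightarrow{\pi_i} F_i \to v_2) = t\cdot\gamma'(v_1\to I_i)\cdot\gamma'(I_i\to\cdots)\cdots$ precisely in the situations we need.

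Concretely, first I would record a lemma: for a corner $\pi_i$ and any vertex $v$, the concatenation rule $\gamma'(I_i \xrightarrow{\pi_i} F_i \to v) = t \cdot \gamma'(F_i \to v)$ holds, and more generally along any chain $v_0 \to v_1 \to \cdots \to v_r$ where each $v_\ell$ is an endpoint of a corner in $\Pi$, the weight $t^{c(\cdot)}$ is multiplicative, by the non-overlapping argument above (a corner straddling a cut vertex would overlap the corner having that vertex as an endpoint). I would also note $\gamma'(v\to v)=1$ since the only path is trivial (acyclicity) with $c=0$, and $\gamma'(I_i \xrightarrow{\pi_i} F_i) = t$. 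Then I transcribe the proof of \autoref{thm:path-nonoverlapping}: expand $\det(M_\Pi)$ by the permutation formula, write $\sigma$ in cycles, show any non-trivial cycle avoiding $n+1$ contributes $0$ because the corresponding product equals $t^{\text{(number of corners)}}$ times the number of closed walks $I_{i_1}\to\cdots\to I_{i_1}$, which is $0$ by acyclicity. The surviving $\sigma$ are the single cycles $(n+1\, i_1\cdots i_k)$; the corresponding term is, by the multiplicativity lemma, $(-1)^k\,\gamma'(I\to F \mid \pi\text{ contains }\pi_{i_1},\ldots,\pi_{i_k}\text{ as subpaths})$, where here $\gamma'$ of a restricted set means the sum of $t^{c(\pi)}$ over such $\pi$. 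Summing over all $k$ and all size-$k$ subsets and invoking inclusion–exclusion on the property ``contains $\pi_i$ as a subpath'' gives $\gamma'(I\to F\mid\text{no subpath in }\Pi) = \det(M_\Pi)$.

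The main obstacle — and where the write-up must be careful rather than conceptual — is justifying the step in the earlier proof that identified \autoref{eq:9} as counting \emph{exactly} the paths through $\pi_{i_k},\ldots,\pi_{i_1}$ in that order, and arguing that summing over orderings yields the unordered count, now with the refinement that each such path is weighted by $t^{c(\pi)}$ rather than by $1$. I would reuse the original contradiction argument verbatim (if $I_{i_{j-1}}$ preceded $F_{i_j}$ on $\pi$ one gets an overlap or containment among the corners, impossible), and then observe that for a fixed subset $\{i_1,\dots,i_k\}$ of corners that \emph{do} all appear in $\pi$, exactly one ordering of them is compatible with the linear order along $\pi$, so each qualifying path is counted once with its correct weight $t^{c(\pi)}$ in $\sum_{\text{orderings}}(-1)^k(\text{ordered count})$. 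The equality $\gamma'(I_{i_j}\xrightarrow{\pi_{i_j}}F_{i_j}\to I_{i_{j-1}}) = t\cdot\gamma'(F_{i_j}\to I_{i_{j-1}})$ used to pass from the matrix entries to the chain is exactly the concatenation lemma, with the factor $t$ accounting for the single corner $\pi_{i_j}$; this is where the hypothesis ``$\Pi$ is a collection of corners'' (rather than arbitrary non-overlapping paths) is essential, since for a longer path $\pi_i$ the analogue would need $\wt(\pi_i)=t^{c(\pi_i)}$ in the off-diagonal entries, which is consistent but the $t$ in the stated matrix is specifically the corner count $c(\pi_i)=1$. I would close by remarking that the second displayed form of $M_\Pi$ follows from the first by the lemma, identical to the reduction in \autoref{thm:nonoverlapping}.
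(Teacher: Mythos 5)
Your proposal follows the paper's proof exactly: both reduce to the argument of \autoref{thm:path-nonoverlapping} and isolate, as the only new ingredient, a concatenation lemma guaranteeing that $\gamma'$ is multiplicative across the cuts actually used there, namely $\gamma'(I_i\xrightarrow{\pi_i}F_i\to V)=t\cdot\gamma'(F_i\to V)$ for $V=F$ or $V=I_j$. The expansion into cycles, the vanishing of cycles avoiding $n+1$, and the inclusion--exclusion are then transcribed verbatim, as in the paper.

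The one point you must repair is your justification of that lemma. You argue that a corner straddling a cut vertex $I_i$ or $F_i$ ``would share an edge with $\pi_i$, contradicting non-overlapping.'' This does not hold up, for two reasons. First, $c(\pi)$ counts \emph{all} box-corners of $\pi$, not only the members of $\Pi$, so a straddling corner need not lie in $\Pi$ and the non-overlapping hypothesis on $\Pi$ says nothing about it. Second, even for members of $\Pi$, sharing an edge does not by itself contradict the paper's definition of overlapping (the fourth collection in the paper's illustrative example shares an edge and is nonetheless non-overlapping). The correct reason --- and the one the paper uses --- is purely geometric: a corner is by definition an east step followed by a north step, while each $\pi_i$ begins with an east step and ends with a north step. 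Hence in any concatenation $\cdots\to I_i\xrightarrow{\pi_i}F_i\to\cdots$ the unique edge leaving $I_i$ is an east step and the unique edge entering $F_i$ is a north step, so no east-then-north turn, and therefore no corner, can have $I_i$ or $F_i$ as its middle vertex; thus $c$ is additive across every cut and the factorisation holds. With that one-line substitution your argument coincides with the paper's.
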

\begin{proof}
    Just as with \autoref{thm:nonoverlapping}, the proof of this theorem is almost identical to \autoref{thm:path-nonoverlapping} with a few minor changes.
    As in the previous two theorems, for the matrix $M_{\Pi}$, we let  
    \begin{align*}
    a_{ij} = \begin{cases} 
        \gamma'{(I_1 \to I_1)} & \text{ for } 1\leq i = j \leq n \\
        \gamma'{(I_j \xrightarrow{\pi_j} F_j\to I_i)} & \text{ for } 1\leq i \neq j \leq n \\
        \gamma'{(F_j \to F)} & \text{ for } 1\leq j \leq n \text{ and } i =n+1\\ 
	    \gamma'{(I \to I_i)} & \text{ for } 1\leq i \leq n \text{ and } j =n+1\\ 
        \gamma'{(I\to F)}  & \text{ for } i =j=n+1
        \end{cases}
    \end{align*}
    where $a_{ij}$ is the entry in the $i$th row and the $j$th column.

    It can be verified that
    \[
        \gamma'(v_1 \xrightarrow{\pi_1} v_2 \xrightarrow{\pi_2} v_3) = \gamma'(v_1 \xrightarrow{\pi_1} v_2) \cdot \gamma'(v_2 \xrightarrow{\pi_2}v_3)
    \]
    if and only if $\pi_1$ does not end in an east step or if $\pi_2$ does not start with a north step.
    This falls naturally since $\gamma'$ is counting the number of corners and a corner is precisely a length $2$ subpath which is an east step followed by a north step.
    Since every corner $\pi \in \Pi$ starts with an east step and ends in an north step, we have
    \[
        \gamma'(I_{i} \xrightarrow{\pi_i} F_i \to V)
        = \gamma'(I_{i} \xrightarrow{\pi_i} F_i) \cdot \gamma'(F_i \to V) 
        = t \cdot \gamma'(F_i \to V)
    \]
    where either $V = F$ or $V = I_j$ for some $j$.
    
    The rest of the proof stays identical.
\end{proof}

\begin{example} 
        Consider again the cone in \autoref{example1}.
	We count paths which do not contain the corners  $\alpha_{12}$ or $ \alpha_{22} $, according to their number of corners.

	\begin{center}
		\begin{tikzpicture}[scale=0.6]
			\begin{scope}[shift={(0,0)}]
				\clip (0,0) circle (4.4cm);
				\draw[domain=-3:4,variable=\x,line width=1.2,black] plot ({\x},0);
				\draw[domain=-3:4,variable=\x,line width=1.2,black] 
				plot({cos(60)*\x},{\x});
				\draw[domain=-3:4,variable=\x,line width=1.2,black]plot({cos(120)*\x},{\x});
				\draw[domain=-3:4,variable=\x,line width=0.8,gray]plot({cos(120)*\x+1},{\x});
				\draw[domain=-3:4,variable=\x,line width=0.8,gray]plot({cos(60)*\x+1},{\x});
				\draw[domain=-3:4,variable=\x,line width=0.8,gray]plot({\x},{1});
				
				\node at (3,-0.27){\scriptsize\color{black} $x_1-x_2\!=\!0$};
				\node at (1.2,3){\rotatebox{64}{\scriptsize\color{black} $x_2-x_3\!=\!0$}};
				\node at (-1.2,3){\rotatebox{-64}{\scriptsize\color{black} $x_1-x_3=\!0\!$}};
				
				\node at (3,0.85){\scriptsize\color{gray} $x_1-x_2\!=\!1$};	
				\node at (2.35,2.3){\rotatebox{64}{\scriptsize\color{gray} $x_2-x_3\!=\!1$}};
				\node at (-0.3,3){\rotatebox{-64}{\scriptsize\color{gray} $x_1-x_3=\!1\!$}};
				\fill[ablue,opacity=0.2] (0,0)--(117:5.5)--(180:5.5)--cycle;
				\node at (-2.16506350946109, 1.5) {$C_w$};
			\end{scope}
			\begin{scope}[shift={(6.5,0)}, scale=1.4,thick,decoration={markings,mark=at position 0.9 with {\arrow{>}}}] 
				\draw[postaction={decorate}] (0,0) -- (0,1);
				\draw[postaction={decorate}] (0,0) -- (1,0);
				\draw[postaction={decorate}] (0,1) -- (1,1);
				\draw[postaction={decorate}] (0,1) -- (0,2);
				\draw[postaction={decorate}] (0,2) -- (1,2);
				\draw[postaction={decorate}] (1,0) -- (1,1);
				\draw[postaction={decorate}] (1,1) -- (1,2);
				\draw[postaction={decorate}] (1,1) -- (2,1);
				\draw[postaction={decorate}] (1,2) -- (2,2);
				\draw[postaction={decorate}] (2,1) -- (2,2);
				\node at (0.5,0.5) {$\alpha_{11}$};
				\node at (0.5,1.5) {$\alpha_{12}$};
				\node at (1.5,1.5) {$\alpha_{22}$};
			\node at (0,-0.3){$(0,1)$};
			\node at (2,2.3){$(2,3)$};
			\end{scope}
		\end{tikzpicture}
	\end{center}

	In view of  \autoref{thm:narayana-poly}, we have
	\begin{align*}
		\order{C_w} &= \det
        {
\renewcommand{\arraystretch}{1.6}
		\begin{pmatrix}
			1 & t\,\weakaboveNar{(2,3)}{(0,2)} & \weakaboveNar{(0,1)}{(0,2)}\\
			t\,\weakaboveNar{(1,3)}{(1,2)} & 1 & \weakaboveNar{(0,1)}{(1,2)}\\
			t\,\weakaboveNar{(1,3)}{(2,3)} & t\,\weakaboveNar{(2,3)}{(2,3)} & \weakaboveNar{(0, 1)}{(2, 3)}\\
		\end{pmatrix}
    }
		\\
		&= \det
		\begin{pmatrix}
			1 & 0 & 1\\
			0 & 1 & 1+t \\
		 t  & t  &  1+3t+t^2\\
		\end{pmatrix} \\
        &=  1+t 
	\end{align*}
\end{example}

\begin{remark}
	We remark here that in the type A case the generating polynomials appearing in the  determinant of
	\autoref{thm:narayana-poly} can be computed using \cite[Theorem 10.14.1]{Krattenthaler_LatticePathEnumeration} which states that 
	all lattice paths from $(a,b)$ to $(c,d)$ weakly above $x=y$ with exactly $\ell$ corners 
  are given by 
	$\binom{d-b}{\ell}\binom{c-a}{\ell}-\binom{d-a-1}{\ell-1}\binom{c-b+1}{\ell+1}$. 
	It would be interesting to find such a refined  enumeration for paths in the 
	graphs of type $B$ and $D$. 
\end{remark}

\printindex

\nocite{*}
\printbibliography[title={References}]
\label{sec:biblio}

\begin{sidewaysfigure}
    \vspace*{170mm}
    \begin{center}
        {\small
    \begin{tikzpicture}[yscale=0.9, xscale=1.5]
	\begin{scope}[shift={(0,0)},thick,decoration={markings,mark=at position 0.7 with {\arrow{>}}}]    

        \draw[postaction={decorate}] (0,3) -- (0,5);
        \draw[postaction={decorate}] (1,3) -- (5.5,5);
        \draw[postaction={decorate}] (2,3) -- (11,5);
        \draw[postaction={decorate}] (3,3) -- (12,5);
        \draw[postaction={decorate}] (0,6) -- (0,8);
        \draw[postaction={decorate}] (1,6) -- (4,8);
        \draw[postaction={decorate}] (2,6) -- (8,8);
        \draw[postaction={decorate}] (5.5,6) -- (6,8);
        \draw[postaction={decorate}] (6.5,6) -- (10,8);
        \draw[postaction={decorate}] (11,6) -- (11,8);

        \draw[postaction={decorate}] (0,9) -- (0,11);
        \draw[postaction={decorate}] (1,9) -- (1,11);
        \draw[postaction={decorate}] (2,9) -- (2,11);
        \draw[postaction={decorate}] (3,9) -- (3,11);
        \draw[postaction={decorate}] (6,9) -- (2,11);
        \draw[postaction={decorate}] (7,9) -- (3,11);
        \draw[postaction={decorate}] (11,9) -- (3,11);
    \end{scope}
	\begin{scope}[shift={(0,0)},thick,decoration={markings,mark=at position 0.9 with {\arrow{>}}}]
        \draw[postaction={decorate}] (0,0) -- (0,1);
        \draw[postaction={decorate}] (0,1) -- (0,2);
        \draw[postaction={decorate}] (0,2) -- (0,3);
        \draw[postaction={decorate}] (0,0) -- (1,0);
        \draw[postaction={decorate}] (0,1) -- (1,1);
        \draw[postaction={decorate}] (0,2) -- (1,2);
        \draw[thick, dashed] (0,3) -- (1,3);
        \draw[postaction={decorate}] (1,0) -- (1,1);
        \draw[postaction={decorate}] (1,1) -- (1,2);
        \draw[postaction={decorate}] (1,2) -- (1,3);
        \draw[postaction={decorate}] (1,1) -- (2,1);
        \draw[postaction={decorate}] (1,2) -- (2,2);
        \draw[thick, dashed] (1,3) -- (2,3);
        \draw[postaction={decorate}] (2,1) -- (2,2);
        \draw[postaction={decorate}] (2,2) -- (2,3);
        \draw[postaction={decorate}] (2,2) -- (3,2);
        \draw[thick, dashed] (2,3) -- (3,3);
        \draw[postaction={decorate}] (3,2) -- (3,3);
        \draw[fill=Black] (0, 0) circle[radius=2pt, fill=Black] node[below left] {$I$};
        \node at (0.5, 0.5) {\tiny$\alpha_{11}$};
        \node at (0.5, 1.5) {\tiny$\alpha_{12}$};
        \node at (0.5, 2.5) {\tiny$\alpha_{13}$};
        \node at (1.5, 1.5) {\tiny$\alpha_{22}$};
        \node at (1.5, 2.5) {\tiny$\alpha_{23}$};
        \node at (2.5, 2.5) {\tiny$\alpha_{33}$};

        \draw[postaction={decorate}] (0,5) -- (1,5);
        \draw[postaction={decorate}] (1,5) -- (2,5);
        \draw[postaction={decorate}] (2,5) -- (3,5);
        \draw[postaction={decorate}] (3,5) -- (4,5);
        \draw[postaction={decorate}] (0,5) -- (0,6);
        \draw[postaction={decorate}] (1,5) -- (1,6);
        \draw[postaction={decorate}] (2,5) -- (2,6);
        \draw[postaction={decorate}] (3,5) -- (3,6);
        \draw[postaction={decorate}] (4,5) -- (4,6);
        \draw[thick, dashed] (0,6) -- (2,6);
        \draw[postaction={decorate}] (4,6) -- (3,6);
        \draw[postaction={decorate}] (3,6) -- (2,6);
        \node at (0.5, 5.5) {\tiny$\alpha_{14}$};
        \node at (1.5, 5.5) {\tiny$\alpha_{24}$};
        \node at (2.5, 5.5) {\tiny$\alpha_{34}$};
        \node at (3.5, 5.5) {\tiny$\alpha_{44}$};
        \draw[postaction={decorate}] (5.5,5) -- (6.5,5);
        \draw[postaction={decorate}] (6.5,5) -- (7.5,5);
        \draw[postaction={decorate}] (7.5,5) -- (8.5,5);
        \draw[postaction={decorate}] (5.5,5) -- (5.5,6);
        \draw[postaction={decorate}] (6.5,5) -- (6.5,6);
        \draw[postaction={decorate}] (7.5,5) -- (7.5,6);
        \draw[postaction={decorate}] (8.5,5) -- (8.5,6);
        \draw[thick, dashed] (4.5,5) -- (4.5,6);
        \draw[thick, dashed] (4.5,5) -- (5.5,5);
        \draw[thick, dashed] (4.5,6) -- (6.5,6);
        \draw[postaction={decorate}] (8.5,6) -- (7.5,6);
        \draw[postaction={decorate}] (7.5,6) -- (6.5,6);
        \node at (5, 5.5) {\tiny$\alpha_{14}$};
        \node at (6, 5.5) {\tiny$\alpha_{24}$};
        \node at (7, 5.5) {\tiny$\alpha_{34}$};
        \node at (8, 5.5) {\tiny$\alpha_{44}$};
        \draw[postaction={decorate}] (11,5) -- (12,5);
        \draw[postaction={decorate}] (12,5) -- (13,5);
        \draw[postaction={decorate}] (11,5) -- (11,6);
        \draw[postaction={decorate}] (12,5) -- (12,6);
        \draw[postaction={decorate}] (13,5) -- (13,6);
        \draw[thick, dashed] (9,5) -- (9,6);
        \draw[thick, dashed] (10,5) -- (10,6);
        \draw[thick, dashed] (9,5) -- (11,5);
        \draw[thick, dashed] (9,6) -- (11,6);
        \draw[postaction={decorate}] (13,6) -- (12,6);
        \draw[postaction={decorate}] (12,6) -- (11,6);
        \node at (9.5, 5.5) {\tiny$\alpha_{14}$};
        \node at (10.5, 5.5) {\tiny$\alpha_{24}$};
        \node at (11.5, 5.5) {\tiny$\alpha_{34}$};
        \node at (12.5, 5.5) {\tiny$\alpha_{44}$};

        \draw[postaction={decorate}] (0,8) -- (1,8);
        \draw[postaction={decorate}] (1,8) -- (2,8);
        \draw[postaction={decorate}] (2,8) -- (3,8);
        \draw[postaction={decorate}] (0,8) -- (0,9);
        \draw[postaction={decorate}] (1,8) -- (1,9);
        \draw[postaction={decorate}] (2,8) -- (2,9);
        \draw[postaction={decorate}] (3,8) -- (3,9);
        \draw[thick, dashed] (0,9) -- (1,9);
        \draw[thick, dashed] (1,9) -- (2,9);
        \draw[thick, dashed] (2,9) -- (3,9);
        \node at (0.5, 8.5) {\tiny$\alpha_{13,12}$};
        \node at (1.5, 8.5) {\tiny$\alpha_{13,22}$};
        \node at (2.5, 8.5) {\tiny$\alpha_{23,22}$};
        \draw[postaction={decorate}] (4,8) -- (5,8);
        \draw[postaction={decorate}] (5,8) -- (6,8);
        \draw[postaction={decorate}] (6,8) -- (7,8);
        \draw[postaction={decorate}] (4,8) -- (4,9);
        \draw[postaction={decorate}] (5,8) -- (5,9);
        \draw[postaction={decorate}] (6,8) -- (6,9);
        \draw[postaction={decorate}] (7,8) -- (7,9);
        \draw[postaction={decorate}] (4,9) -- (5,9);
        \draw[postaction={decorate}] (5,9) -- (6,9);
        \draw[thick, dashed] (6,9) -- (7,9);
        \node at (4.5, 8.5) {\tiny$\alpha_{13,12}$};
        \node at (5.5, 8.5) {\tiny$\alpha_{13,22}$};
        \node at (6.5, 8.5) {\tiny$\alpha_{23,22}$};
        \draw[postaction={decorate}] (8,8) -- (9,8);
        \draw[postaction={decorate}] (9,8) -- (10,8);
        \draw[postaction={decorate}] (10,8) -- (11,8);
        \draw[postaction={decorate}] (8,8) -- (8,9);
        \draw[postaction={decorate}] (9,8) -- (9,9);
        \draw[postaction={decorate}] (10,8) -- (10,9);
        \draw[postaction={decorate}] (11,8) -- (11,9);
        \draw[postaction={decorate}] (8,9) -- (9,9);
        \draw[postaction={decorate}] (9,9) -- (10,9);
        \draw[postaction={decorate}] (10,9) -- (11,9);
        \node at (8.5, 8.5) {\tiny$\alpha_{13,12}$};
        \node at (9.5, 8.5) {\tiny$\alpha_{13,22}$};
        \node at (10.5, 8.5) {\tiny$\alpha_{23,22}$};

        \draw[postaction={decorate}] (0,11) -- (0,12); 
        \draw[postaction={decorate}] (0,12) -- (0,13);
        \draw[postaction={decorate}] (0,13) -- (0,14);
        \draw[postaction={decorate}] (0,14) -- (0,15);
        \draw[postaction={decorate}] (0,15) -- (0,16);
        \draw[postaction={decorate}] (0,16) -- (0,17);

        \draw[postaction={decorate}] (0,11) -- (1,11); 
        \draw[postaction={decorate}] (0,12) -- (1,12);
        \draw[postaction={decorate}] (0,13) -- (1,13);
        \draw[postaction={decorate}] (0,14) -- (1,14);
        \draw[postaction={decorate}] (0,15) -- (1,15);
        \draw[postaction={decorate}] (0,16) -- (1,16);
        \draw[postaction={decorate}] (0,17) -- (1,17);

        \draw[postaction={decorate}] (1,11) -- (1,12); 
        \draw[postaction={decorate}] (1,12) -- (1,13);
        \draw[postaction={decorate}] (1,13) -- (1,14);
        \draw[postaction={decorate}] (1,14) -- (1,15);
        \draw[postaction={decorate}] (1,15) -- (1,16);
        \draw[postaction={decorate}] (1,16) -- (1,17);

        \draw[postaction={decorate}] (1,11) -- (2,11); 
        \draw[postaction={decorate}] (1,12) -- (2,12);
        \draw[postaction={decorate}] (1,13) -- (2,13);
        \draw[postaction={decorate}] (2,14) -- (1,14);

        \draw[postaction={decorate}] (2,11) -- (2,12); 
        \draw[postaction={decorate}] (2,12) -- (2,13);
        \draw[postaction={decorate}] (2,13) -- (2,14);

        \draw[postaction={decorate}] (2,11) -- (3,11); 
        \draw[postaction={decorate}] (2,12) -- (3,12);
        \draw[postaction={decorate}] (3,13) -- (2,13);

        \draw[postaction={decorate}] (3,11) -- (3,12); 
        \draw[postaction={decorate}] (3,12) -- (3,13);
        \node at (0.5, 11.5) {\tiny$\alpha_{14, 12}$};
        \node at (0.5, 12.5) {\tiny$\alpha_{14, 13}$};
        \node at (0.5, 13.5) {\tiny$\alpha_{14, 13, 22} $};
        \node at (0.5, 14.5) {\tiny$\alpha_{14, 13, 22, 22} $};
        \node at (0.5, 15.5) {\tiny$\alpha_{14, 13, 23, 22} $};
        \node at (0.5, 16.5) {\tiny$\alpha_{14, 14, 23, 22} $};

        \node at (1.5, 11.5) {\tiny$\alpha_{14, 22}$};
        \node at (1.5, 12.5) {\tiny$\alpha_{14, 23} $};
        \node at (1.5, 13.5) {\tiny$\alpha_{14, 23, 22} $};

        \node at (2.5, 11.5) {\tiny$\alpha_{24, 22}$};
        \node at (2.5, 12.5) {\tiny$\alpha_{24, 23}$};
        \draw[fill=Black] (1, 17) circle[radius=2pt, fill=Black] node[above right] {$F$};
    \end{scope}
\end{tikzpicture}
}
        {\small
    \begin{tikzpicture}[xscale=1.2,thick,decoration={markings,mark=at position 0.9 with {\arrow{>}}}]
        \draw[postaction={decorate}] (0, 1) -- (1, 1);
        \draw[postaction={decorate}] (0, 1) -- (0, 2);
        \draw[postaction={decorate}] (1, 1) -- (1, 2);
        \draw[postaction={decorate}] (0, 2) -- (1, 2);
        \draw[postaction={decorate}] (1, 2) -- (2, 2);
        \draw[postaction={decorate}] (0, 2) -- (0, 3);
        \draw[postaction={decorate}] (1, 2) -- (1, 3);
        \draw[postaction={decorate}] (2, 2) -- (2, 3);
        \draw[postaction={decorate}] (0, 3) -- (1, 3);
        \draw[postaction={decorate}] (2, 3) -- (1, 3);
        \draw[postaction={decorate}] (0, 3) -- (0, 4);
        \draw[postaction={decorate}] (1, 3) -- (1, 4);
        \draw[postaction={decorate}] (0, 4) -- (1, 4);
        \draw[postaction={decorate}] (0, 4) -- (0, 5);
        \draw[postaction={decorate}] (1, 4) -- (1, 5);
        \draw[postaction={decorate}] (0, 5) -- (1, 5);
        \draw[postaction={decorate}] (0, 5) -- (0, 6);
        \draw[postaction={decorate}] (1, 5) -- (1, 6);
        \draw[postaction={decorate}] (0, 6) -- (1, 6);

        \draw[fill=Black] (0, 1) circle[radius=2pt, fill=Black] node[below left] {$I$};
        \draw[fill=Black] (1, 6) circle[radius=2pt, fill=Black] node[above right] {$F$};
        \node at (0.5, 1.5) {$\alpha_{11}$};
        \node at (0.5, 2.5) {$\alpha_{12}$};
        \node at (1.5, 2.5) {$\alpha_{22}$};
        \node at (0.5, 3.5) {$\alpha_{12,11}$};
        \node at (0.5, 4.5) {$\alpha_{12,11,11}$};
        \node at (0.5, 5.5) {$\alpha_{12,12,11}$};
    \end{tikzpicture}
}
        \caption{On the left is the diagram $\Gamma_{F_4}$ for the $F_4$ Shi arrangement and on the right is the diagram $\Gamma_{G_2}$ for the $G_2$ Shi arrangement.}
        \label{fig:gamma_F}
    \end{center}
\end{sidewaysfigure}
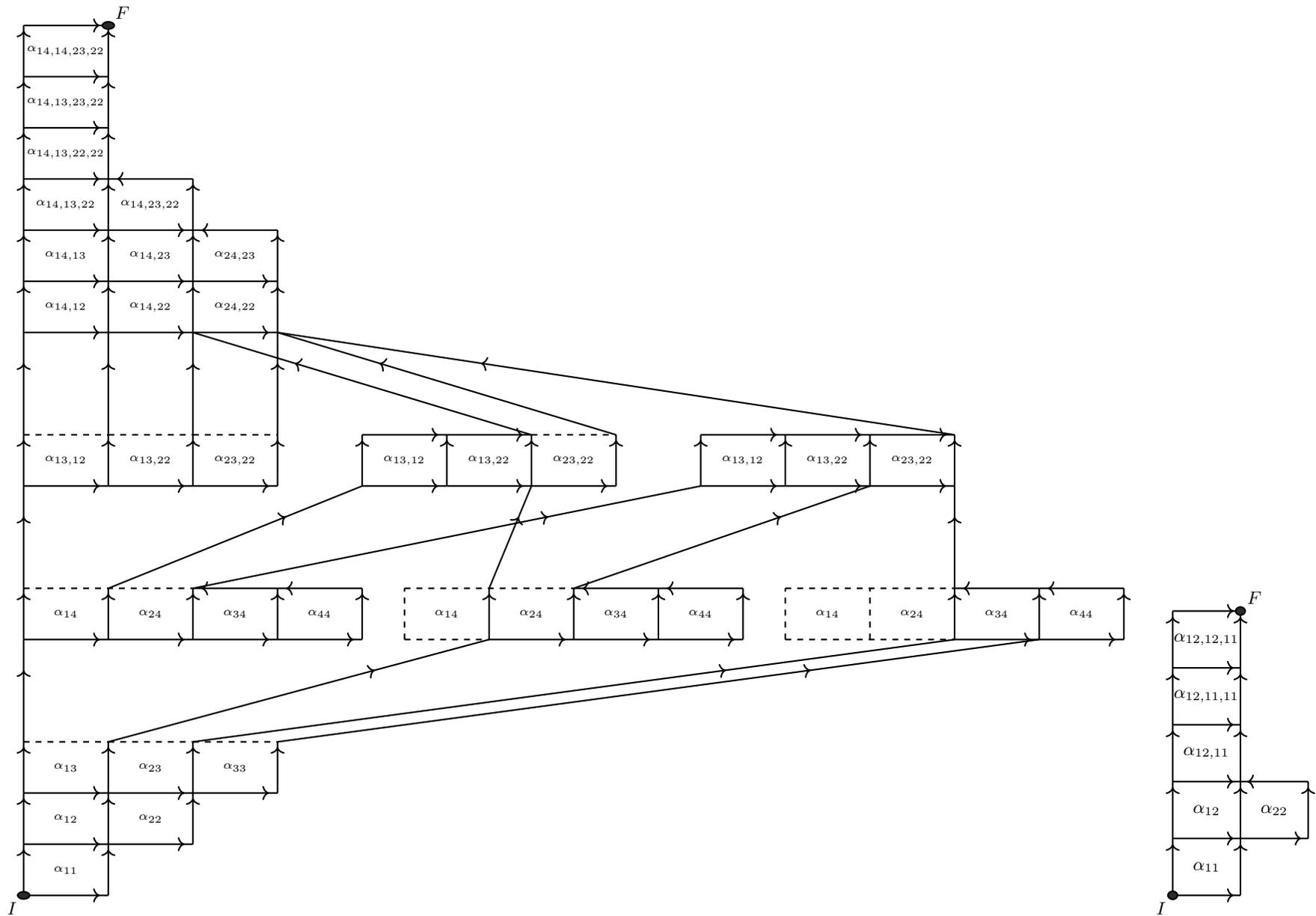

\begin{sidewaysfigure}
    \vspace*{170mm}
    \centering
        \input{figures/E6}
        \caption{The diagram $\Gamma_{E_6}$ for the $E_6$ Shi arrangement. The number represent the different roots found in \autoref{fig:E6_root}.}
        \label{fig:gamma_E6}
\end{sidewaysfigure}
\begin{sidewaysfigure}
    \vspace*{170mm}
    \begin{center}
        \input{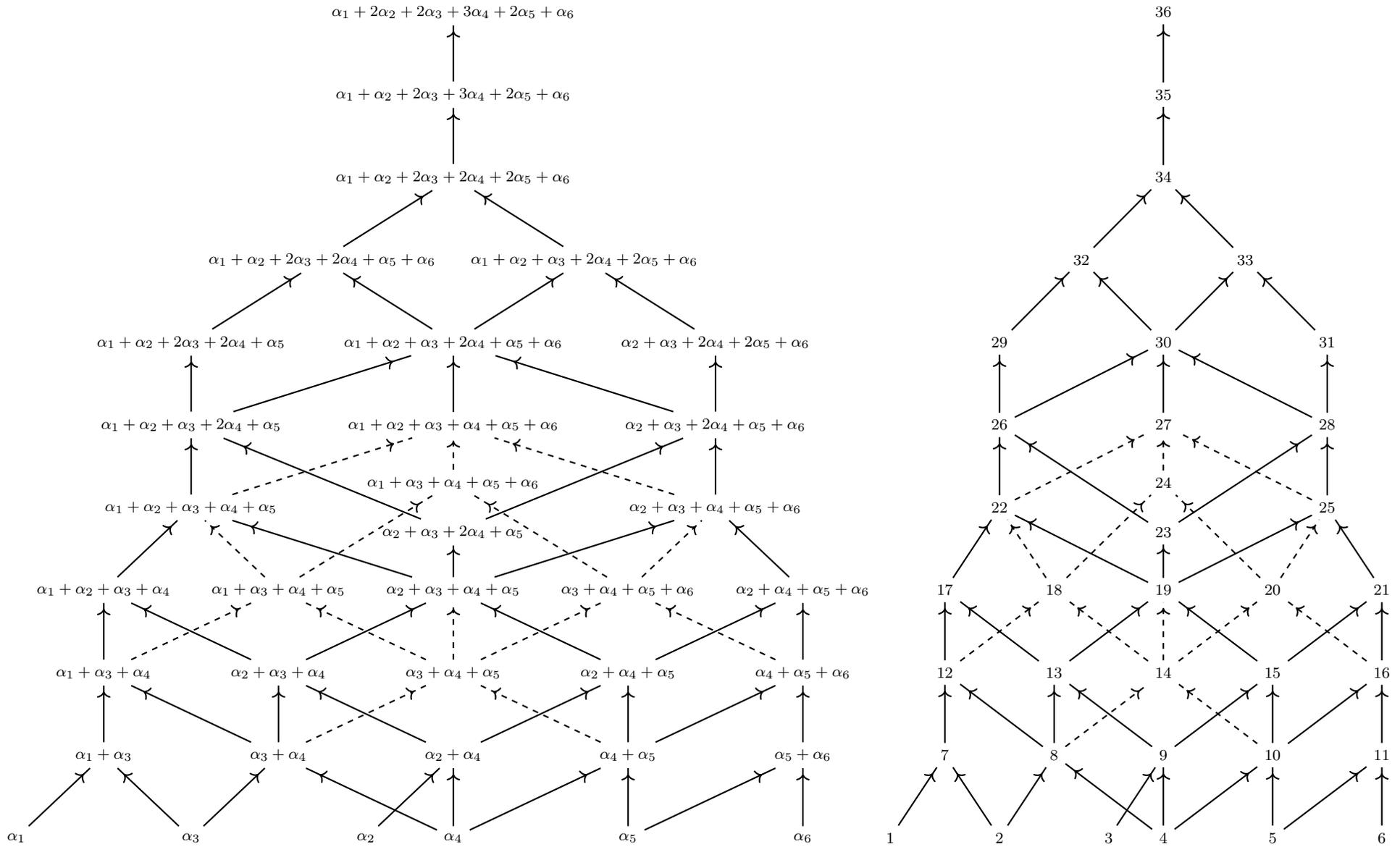}
        \caption{The figure on the left is the root poset labelled by roots while the figure on the right is the root poset labelled by numbers as associated with \autoref{fig:gamma_E6}.}
        \label{fig:E6_root}
    \end{center}
\end{sidewaysfigure}

\end{document}